\newtheorem{thm}{Theorem}[section]
\newtheorem{cor}[thm]{Corollary}
\newtheorem{lem}[thm]{Lemma}
\newtheorem{prop}[thm]{Proposition}
\newtheorem{rem}[thm]{Remark}
\theoremstyle{definition}
\numberwithin{equation}{section}
\newcommand{\ts}{\mathbb{T}}
\newcommand{\si}{ \sigma}
\newcommand{\Si}{ \Sigma}
\newcommand{\ti}{ \mathbb{T}^\infty}
\newcommand{\ghat}{ \widehat{G}}
\newcommand{\lif}{ L^\infty}
\newcommand{\sn}{ \sigma_N}
\newcommand{\nsn}{ \left|\sn\right|}
\newcommand{\bigls}{ \left(}
\newcommand{\bigrs}{ \right)}
\newcommand{\lifr}{ \lif_{\mathbb{R}}}
\newcommand{\tsn}{T_{\sn}}
\newcommand{\T}{\mathbb{T}}
\title[ ]{The first Szeg\H{o} limit theorem on multi-dimensional torus}
\author{Kunyu Guo}
\address{Kunyu Guo: School of Mathematical Sciences, Fudan University, Shanghai, 200433, China}
\email{kyguo@fudan.edu.cn}
\author{Dilong Li}
\address{Dilong Li: School of Mathematical Sciences, Fudan University, Shanghai, 200433, China}
\email{dlli23@m.fudan.edu.cn}
\author{Qi Zhou}
\address{
Qi Zhou: School of Mathematical Sciences, Soochow University, Suzhou, 215006, China}
\email{zhouqi@suda.edu.cn}
\keywords{The first Szeg\H{o} limit theorem, multiplicative Toeplitz matrix, F{\o}lner sequence, non-F{\o}lner sequence.}
\subjclass[2020]{Primary: 15B05, 43A15.}
\begin{document}

\begin{abstract}
In this paper, we consider the first Szeg\H{o} limit theorems on $d$-torus $\mathbb{T}^d$ for $1\leq d\leq +\infty$. It is shown that for any F{\o}lner sequence $\{\sn\}$ of $\mathbb{Z}^d$ and $\varphi\in L^1_+(\mathbb{T}^d)$, it holds that
$$
\lim_{N\rightarrow \infty}\left(\det T_{\sn}\varphi\right)^{\frac{1}{\nsn}}=\exp\left(\int_{\mathbb{T}^d} \log\varphi~dm_{d}\right).
$$
In the case  $d=+\infty$, we are associated with multiplicative Toeplitz matrix $\bm{T \varphi}=\{\bm{\widehat{\varphi}}(j/i)\}_{i,j\in\mathbb{N}}$ and the most concerned non-F{\o}lner truncation, that is, $\bm{T_N \varphi}=\{\bm{\widehat{\varphi}}(j/i)\}_{1\leq i,j\leq N}$, where $\sn=\{1,\dots,N\}$.
It is shown that for each $\varphi\in \lifr(\mathbb{T^{\infty}})$ and $f\in C[\text{ess-inf} ~\varphi,~\text{ess-sup}~\varphi]$, the limit $\lim_{N\rightarrow \infty} \frac{1}{N}\mathrm{Tr} f \big(\bm{T_N \varphi}\big)$ exsits.
Moreover, it is proven that the limit $\lim_{N\rightarrow \infty}\left(\det\bm{T_N \varphi}\right)^{\frac{1}{N}}$ exists for any $\varphi\in L^1_+(\mathbb{T}^\infty)$ with strictly positive essential infimum. These results are directly related to two problems posed by Nikolski and Pushnitski in \cite{2021SZEGO}.
\end{abstract}

\maketitle

\section{Introduction}\label{section_one}
Let $\varphi\in L^1(\mathbb{T})$ with Fourier coefficients
$$
\widehat{\varphi}(k)=\int_{\ts} \varphi(z)\bar{z}^k~dm_1,~~k\in\mathbb{Z},
$$
where $m_1$ is the normalized Lebesgue measure on the unit circle $\mathbb{T}$. The Toeplitz matrix with symbol $\varphi$ is defined as
$$
T\varphi=\{\widehat{\varphi}(j-i)\}_{i,j=0,1,2,~\ldots}.
$$
Let $T_N \varphi$ be the truncated $N \times N$ Toeplitz matrices, i.e.
$$
T_N \varphi=\left\{\widehat{\varphi}(j-i)\right\}_{i,j=0}^{N-1}.
$$
Based on the asymptotics of the determinants of $T_N \varphi $, the following theorem was first proven by Szeg\H{o}  in 1915 \cite{szego1915ein}.
\begin{thm}\label{first_version}[First Szeg\H{o} Limit Theorem, First Version].
    Let $\varphi(z)>0$ be a continuous function on $\mathbb{T}$, then
    \begin{equation}\label{det_form}
        \lim_{N\rightarrow +\infty}\left(\det T_N\varphi\right)^{\frac{1}{N}}=\exp\left(\int_{\mathbb{T}}\log\varphi~dm_1\right).
    \end{equation}
\end{thm}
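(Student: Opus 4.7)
My plan is to derive \eqref{det_form} as a consequence of a Weyl-type eigenvalue distribution theorem for $T_N\varphi$, specialized to the logarithm. Since $\varphi$ is real, continuous, and strictly positive on the compact set $\mathbb{T}$, the matrix $T_N\varphi$ is Hermitian and positive definite, with all eigenvalues $\lambda_1^{(N)},\dots,\lambda_N^{(N)}$ contained in $[m,M]$, where $m:=\min_{\mathbb{T}}\varphi>0$ and $M:=\max_{\mathbb{T}}\varphi<\infty$. Since $\tfrac{1}{N}\log\det T_N\varphi=\tfrac{1}{N}\sum_{k=1}^N \log\lambda_k^{(N)}$ and $\log$ is continuous on $[m,M]\subset (0,\infty)$, the task reduces to establishing the distributional limit
$$
\frac{1}{N}\sum_{k=1}^N F(\lambda_k^{(N)})\longrightarrow \int_\mathbb{T} F(\varphi)\,dm_1 \quad \text{for every } F\in C[m,M], \qquad(\ast)
$$
and then applying $(\ast)$ with $F(t)=\log t$ and exponentiating.

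By Weierstrass approximation it suffices to verify $(\ast)$ for the monomials $F(t)=t^p$, that is, to show that for every $p\in\mathbb{N}$,
$$
\frac{1}{N}\mathrm{Tr}\,(T_N\varphi)^p \longrightarrow \int_\mathbb{T}\varphi^p\,dm_1.
$$
The case $p=1$ is immediate, since the trace equals $N\widehat\varphi(0)$. For $p\ge 2$ I would expand the trace into a multi-index sum and reparametrize by the consecutive differences $j_i:=k_i-k_{i-1}$, obtaining
$$
\mathrm{Tr}\,(T_N\varphi)^p = \sum_{\substack{j_1,\dots,j_p\in\mathbb{Z} \\ j_1+\cdots+j_p=0}} \widehat\varphi(j_1)\cdots\widehat\varphi(j_p)\,C_N(j_1,\dots,j_p),
$$
where $C_N(j_1,\dots,j_p)$ counts the admissible starting points $k_0\in\{0,\dots,N-1\}$ whose partial orbits $k_0+j_1+\cdots+j_i$ remain in $\{0,\dots,N-1\}$. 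For each fixed tuple, $C_N/N\to 1$ with $N-C_N$ controlled by a quantity depending only on $(j_1,\dots,j_p)$.

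For trigonometric polynomial symbols the displayed sum is finite, so the computation collapses directly to $\sum_{j_1+\cdots+j_p=0}\widehat\varphi(j_1)\cdots\widehat\varphi(j_p)=\widehat{\varphi^p}(0)=\int_\mathbb{T}\varphi^p\,dm_1$. I would then pass to general continuous $\varphi$ by uniform approximation (Fej\'er's theorem) combined with the contraction $\|T_N\varphi\|\le\|\varphi\|_\infty$ and a telescoping estimate for $(T_N\varphi)^p-(T_N\psi)^p$, giving
$$
\frac{1}{N}\bigl|\mathrm{Tr}(T_N\varphi)^p-\mathrm{Tr}(T_N\psi)^p\bigr|\le p \max(\|\varphi\|_\infty,\|\psi\|_\infty)^{p-1}\|\varphi-\psi\|_\infty
$$
uniformly in $N$. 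I expect the main obstacle to be the uniform control of the edge deviation $N-C_N(j_1,\dots,j_p)$ across all admissible tuples; the reduction to trigonometric polynomials is precisely what makes this tractable, replacing an analytic uniformity question by a finite combinatorial one. With $(\ast)$ established, specializing $F=\log$ and exponentiating delivers \eqref{det_form}.
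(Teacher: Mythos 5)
Your proof is correct, and it is the classical route: reduce to the spectral distributional limit $(\ast)$ for all $F\in C[m,M]$ (which is essentially Theorem \ref{second_version}), establish $(\ast)$ via moments by computing $\frac{1}{N}\mathrm{Tr}(T_N\varphi)^p$ combinatorially, pass from trigonometric polynomials to general continuous $\varphi$ via the telescoping trace estimate, and finally specialize $F=\log$ (which is legitimate because $\varphi>0$ and continuous forces the spectrum into a compact subset of $(0,\infty)$). The step $\mathrm{Tr}\,|T_N(\varphi-\psi)|\le N\|\varphi-\psi\|_\infty$ that your estimate implicitly uses is correct since $T_N(\varphi-\psi)$ is Hermitian of size $N$ with operator norm at most $\|\varphi-\psi\|_\infty$.

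The paper, however, does not prove Theorem \ref{first_version}; it cites it to Szeg\H{o} (1915). More to the point, the paper's own mechanism for the $d$-torus generalization (Theorem \ref{first_T_d}, of which \ref{first_version} is the $d=1$, $\sigma_N=\{0,\dots,N-1\}$ case) is genuinely different from your moment argument. There, the core is the two-sided determinant inequality $\exp(\int\log\varphi)\le(\det T_\sigma\varphi)^{1/|\sigma|}\le\|\varphi\|_1$ (Theorem \ref{bdd_below_thm}), whose lower bound is proved by factoring out an outer function in one coordinate, reducing dimension by one, and inducting, with Jensen's inequality at the base; the upper bound is AM--GM on eigenvalues. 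The limit then follows by a cluster-point argument: any limit point $\alpha$ exceeds $\exp(\int\log\varphi)$ by the inequality, and a log-series expansion together with Lemma \ref{g_over_f} and Fatou's lemma for series forces $\alpha\le\exp(\int\log\varphi)$. Your approach is shorter and entirely self-contained for continuous (indeed $L^\infty_{\mathbb R}$) symbols, but it does not reach $L^1_+$ symbols, where $\log$ is unbounded below and $\varphi^p$ need not be integrable, so the reduction to moments fails; the paper's inequality-plus-cluster-point route is precisely what buys the $L^1_+$ and multi-dimensional generality that the rest of the paper needs.
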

\noindent Actually, Theorem \ref{first_version} can be extended to non-negative integrable symbols, i.e. $\varphi\in L^1_+(\mathbb{T})$ (see \cite{simon2005orthogonal}). It is worth mentioning that the estimation of determinants of Toeplitz matrices is closely related to an extraordinary variety of problems in mathematics, physics, and engineering. There is a vast literature on these subjects (see  \cite{bottcher1983invertibility,bottcher2013analysis,bottcher2012introduction,fisher1969toeplitz,widom1973toeplitz,widom1976asymptotic,widom1975limit}).

Note that \eqref{det_form} can be rewritten in the form
\begin{equation}\label{log_form}
    \lim_{N\rightarrow +\infty} \frac{1}{N} \text{Tr} \log (T_N \varphi )=\int_{\mathbb{T}} \log \varphi~dm_1.
\end{equation}
This leads to a second version of the limit theorem \cite{szegHo1920beitrage}.
\begin{thm}\label{second_version}[First Szeg\H{o} Limit Theorem, Second Version].
    Let $\varphi(z)\in L^\infty_{\mathbb{R}}(\mathbb{T})$. Then for any $f\in C[\mathrm{ess\text{-}inf}~\varphi,\mathrm{ess\text{-}sup}~\varphi]$, it holds that
    \begin{equation}\label{f_form}
        \lim_{N\rightarrow +\infty} \frac{1}{N} \mathrm{Tr} f (T_N\varphi) = \int_{\mathbb{T}} f(\varphi)~dm_1.
    \end{equation}
\end{thm}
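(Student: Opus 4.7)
The plan is to prove Theorem \ref{second_version} directly by the moment method: reduce to polynomial $f$ via Weierstrass approximation, then reduce to trigonometric polynomial $\varphi$ via a Hilbert--Schmidt estimate, and finally compute the moments explicitly. First, since $\varphi$ is real-valued, $T_N\varphi$ is Hermitian. Identifying $\mathbb{C}^N$ with $\mathrm{span}\{1,z,\dots,z^{N-1}\}\subset L^2(\mathbb{T})$, the Rayleigh quotient computation
\[
\langle T_N\varphi\,v,v\rangle \;=\; \int_{\mathbb{T}} \varphi\,|V|^2\,dm_1, \qquad V(z):=\sum_{i=0}^{N-1}v_iz^i,
\]
shows that all eigenvalues of $T_N\varphi$ lie in $I := [m,M]$ with $m := \mathrm{ess\text{-}inf}\,\varphi$ and $M := \mathrm{ess\text{-}sup}\,\varphi$, so $f(T_N\varphi)$ is well defined for $f\in C(I)$ and $|\mathrm{Tr}\,f(T_N\varphi)|\leq N\|f\|_{C(I)}$. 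Consequently, both sides of \eqref{f_form} (after dividing by $N$) depend continuously on $f$ in the uniform norm. By the Weierstrass approximation theorem, it suffices to verify \eqref{f_form} for the monomials $f(x)=x^k$, $k\geq 1$, i.e., to prove $\frac{1}{N}\mathrm{Tr}\,(T_N\varphi)^k\to\int_{\mathbb{T}}\varphi^k\,dm_1$.

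When $\varphi$ is a trigonometric polynomial of degree $\leq d$, only $O_d(1)$ multi-indices contribute to
\[
\mathrm{Tr}\,(T_N\varphi)^k \;=\; \sum_{0\leq i_1,\dots,i_k<N}\widehat{\varphi}(i_2-i_1)\,\widehat{\varphi}(i_3-i_2)\cdots\widehat{\varphi}(i_1-i_k),
\]
and the substitution $j_l := i_{l+1}-i_l$ (indices cyclic), together with the observation that for each admissible tuple $(j_1,\dots,j_{k-1})$ the number of valid starting indices $i_1$ equals $N-O_d(1)$, yields the limit $\sum_{j_1+\cdots+j_k=0}\widehat{\varphi}(j_1)\cdots\widehat{\varphi}(j_k) = \widehat{\varphi^k}(0) = \int_{\mathbb{T}}\varphi^k\,dm_1$. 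To pass to a general $\varphi\in L^\infty_{\mathbb{R}}(\mathbb{T})$, approximate it by its Fejér means $\sigma_M\varphi$, which satisfy $\|\sigma_M\varphi\|_\infty\leq\|\varphi\|_\infty$ and $\sigma_M\varphi\to\varphi$ in $L^2$. Setting $A:=T_N\varphi$ and $B:=T_N(\sigma_M\varphi)$, telescope $A^k-B^k=\sum_{l=0}^{k-1}A^l(A-B)B^{k-l-1}$, take traces with cyclicity, and combine $|\mathrm{Tr}(XY)|\leq\|X\|_{\mathrm{HS}}\|Y\|_{\mathrm{HS}}$ with $\|A-B\|_{\mathrm{HS}}\leq\sqrt{N}\|\varphi-\sigma_M\varphi\|_{L^2}$ and $\|B^{k-l-1}A^l\|_{\mathrm{HS}}\leq\sqrt{N}\|\varphi\|_\infty^{k-1}$ to obtain
\[
\tfrac{1}{N}\bigl|\mathrm{Tr}(A^k)-\mathrm{Tr}(B^k)\bigr| \;\leq\; k\,\|\varphi\|_\infty^{k-1}\,\|\varphi-\sigma_M\varphi\|_{L^2},
\]
which vanishes as $M\to\infty$ uniformly in $N$. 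Since $\int(\sigma_M\varphi)^k\,dm_1\to\int\varphi^k\,dm_1$ by dominated convergence, the trigonometric polynomial case propagates to the $L^\infty$ case.

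The main technical obstacle lies in this last approximation step: for general $\varphi\in L^\infty_{\mathbb{R}}(\mathbb{T})$ the Fourier coefficients $\widehat{\varphi}$ need not be absolutely summable, so the direct moment enumeration used for trigonometric polynomials cannot be executed term by term. The Hilbert--Schmidt estimate above is precisely what controls the approximation error uniformly in $N$, since it exploits the linear growth $\|T_N\psi\|_{\mathrm{HS}}^2\leq N\|\psi\|_{L^2}^2$ rather than the pessimistic $N^2$ bound available from the operator norm alone. With this uniformity in place, the classical Weierstrass--moment machinery closes the argument.
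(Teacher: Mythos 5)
Your proof is correct. Note, however, that the paper does not actually prove Theorem~\ref{second_version} at all: it is stated with a citation to Szeg\H{o}'s 1920 paper and to B\'edos's Theorem~\ref{szego_thm_group} as the general compact-group version, so there is no internal proof to compare against. That said, the machinery you deploy is exactly what the paper later uses for the genuinely new case $\mathbb{T}^\infty$ with the non-F{\o}lner truncation $\sigma_N=\{1,\dots,N\}$: the reduction from $C(I)$ to monomials $x^k$ is Lemma~\ref{general_prop}, the Hilbert--Schmidt bound $\|T_N\psi\|_{\mathcal{S}_2}^2\le N\|\psi\|_{L^2}^2$ is Lemma~\ref{norm_two_controll}, and the telescoping estimate $\frac{1}{N}|\mathrm{Tr}(A^k)-\mathrm{Tr}(B^k)|\le k\|\varphi\|_\infty^{k-1}\|\varphi-\varphi_k\|_2$ is the centerpiece of the proof of Theorem~\ref{second_non_folner}. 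The one difference is the choice of approximating family: you use Fej\'er means $\sigma_M\varphi$ (a band-limited cutoff suited to a single circle), whereas in the infinite-dimensional setting the paper must instead use conditional expectations $\varphi_k=E(\varphi\,|\,\mathcal{F}_k)$ onto the first $k$ coordinates, since there is no degree-$d$ truncation available on $\mathbb{T}^\infty$; both yield the two crucial properties $\|\cdot\|_\infty$-contractivity and $L^2$-convergence. Your explicit moment computation for trigonometric polynomials (counting starting indices $i_1$ with an $O_{d,k}(1)$ boundary loss) is elementary and self-contained, and correctly replaces the appeal to F{\o}lner/Arveson machinery that underlies the paper's cited Theorem~\ref{szego_thm_group}. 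One pedantic point worth flagging: for $k=1$ the telescoping bound is not needed since $\widehat{\sigma_M\varphi}(0)=\widehat{\varphi}(0)$ exactly, and for $k\ge 2$ your bound $\|B^{k-l-1}A^l\|_{\mathrm{HS}}\le\sqrt{N}\|\varphi\|_\infty^{k-1}$ implicitly peels off one HS factor from the remaining $k-1$ operators; this is fine but deserves a sentence.
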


\noindent In the literature, there is a third version of the first Szeg\H{o} limit theorem (see \cite[Theorem 2.7.13]{simon2005orthogonal}).
\begin{thm}\label{third_version}[First Szeg\H{o} Limit Theorem, Third Version].
    Let $\varphi \in L^{1}_+(\mathbb{T})$. If $f$ is continuous on $[0,+\infty)$ and $\lim\limits_{x\rightarrow +\infty} \frac{f(x)}{x}$ is finite,
    then
    \begin{equation}\label{orig_sze}
        \lim_{N\rightarrow +\infty} \frac{1}{N} \mathrm{Tr} f (T_N\varphi) = \int_{\mathbb{T}} f(\varphi)~dm_1.
    \end{equation}
\end{thm}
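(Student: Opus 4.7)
My plan is to reduce Theorem~\ref{third_version} to Theorem~\ref{second_version} by truncating $\varphi$ from above. Set $\varphi_M = \min(\varphi, M) \in L^\infty_+(\mathbb{T})$; then $\varphi - \varphi_M = (\varphi - M)_+ \geq 0$ and $\|(\varphi - M)_+\|_1 \to 0$ as $M \to \infty$. The nonnegativity of $\varphi - \varphi_M$ gives the matrix inequality $0 \leq T_N \varphi - T_N \varphi_M$, with trace exactly $N\|(\varphi - M)_+\|_1$. Writing $\alpha_k^{(N)} \geq \beta_k^{(N)} \geq 0$ for the eigenvalues of $T_N\varphi$ and $T_N\varphi_M$ in decreasing order (Weyl's monotonicity principle), one has $\sum_k(\alpha_k^{(N)} - \beta_k^{(N)}) = N\|(\varphi - M)_+\|_1$ and $\beta_k^{(N)} \leq M$. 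Since $\alpha_k^{(N)} > 2M$ forces $\alpha_k^{(N)} \leq 2(\alpha_k^{(N)} - \beta_k^{(N)})$, one obtains the uniform-in-$N$ tail bounds
\[
\frac{1}{N}\sum_{\alpha_k^{(N)} > 2M}\alpha_k^{(N)} \leq 2\|(\varphi - M)_+\|_1, \qquad \mu_N^\varphi\bigl((2M, \infty)\bigr) \leq \frac{\|(\varphi - M)_+\|_1}{M},
\]
where $\mu_N^\varphi$ is the normalized eigenvalue-counting measure of $T_N\varphi$.

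The hypothesis on $f$ gives $|f(x)| \leq C(1 + x)$ on $[0, \infty)$. For $R > 0$ let $\chi_R$ be a smooth cutoff, $\chi_R \equiv 1$ on $[0, R]$ and $\chi_R \equiv 0$ on $[2R, \infty)$. The error $|f - f\chi_R|(x) \leq C(1+x)\,\mathbf{1}_{\{x \geq R\}}$, combined with the tail bound above applied at $R = 2M$ and the analogous estimate $\int_{\varphi > 2M}(1+\varphi)\, dm_1 \to 0$ for the target integral (using $\int_{\varphi > 2M}\varphi\, dm_1 \leq 2\|(\varphi - M)_+\|_1$), shows that the approximation error vanishes uniformly in $N$ as $M \to \infty$. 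This reduces the proof to the claim $\lim_N \frac{1}{N}\mathrm{Tr}\, h(T_N \varphi) = \int h(\varphi)\, dm_1$ for every $h \in C_c[0, \infty)$.

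For such compactly supported $h$, uniform continuity on $[0,\infty)$ supplies a concave modulus $\omega$ with $\omega(\delta) \to 0$ as $\delta \to 0$. Theorem~\ref{second_version} applied to the bounded symbol $\varphi_M$ yields $\lim_N \frac{1}{N}\mathrm{Tr}\, h(T_N\varphi_M) = \int h(\varphi_M)\, dm_1$, while $\int h(\varphi_M) \to \int h(\varphi)$ by dominated convergence. The key comparison is, by Jensen's inequality applied to the concave $\omega$,
\[
\frac{1}{N}\bigl|\mathrm{Tr}\, h(T_N \varphi) - \mathrm{Tr}\, h(T_N \varphi_M)\bigr| \leq \frac{1}{N}\sum_k \omega\bigl(\alpha_k^{(N)} - \beta_k^{(N)}\bigr) \leq \omega\bigl(\|(\varphi - M)_+\|_1\bigr),
\]
which tends to $0$ as $M \to \infty$ uniformly in $N$. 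The main obstacle is setting up the tail estimates in the first paragraph, which rely on the Weyl coupling of eigenvalues and the trace identity for $T_N(\varphi - \varphi_M)$; once these are in place, the three-step scheme---cutoff of $f$ to reduce to $C_c[0,\infty)$, truncation of $\varphi$ to reduce to $L^\infty_+$, and invocation of Theorem~\ref{second_version}---combines via a standard $\varepsilon/3$ argument to yield Theorem~\ref{third_version}.
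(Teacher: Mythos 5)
Your proof is correct, and it takes a genuinely different route from the one the paper relies on. The paper does not prove Theorem~\ref{third_version} at all --- it is stated as a classical result cited from Simon's book --- and for the $\T^d$ analog (Theorem~\ref{third_T_d}) the paper's proof first establishes the case $f^{(c)}(x)=\log(x+c)$ via Corollary~\ref{Folner_thm_first} (which in turn rests on the determinant machinery of Section~\ref{sec_two}) and then defers the extension to general $f$ to Simon's argument. Your argument bypasses the determinant/$\log$ input entirely: you truncate the symbol from above, couple the eigenvalues of $T_N\varphi$ and $T_N\varphi_M$ via Weyl's monotonicity, extract uniform-in-$N$ tail bounds from the exact trace identity $\frac{1}{N}\mathrm{Tr}\,T_N(\varphi-\varphi_M)=\|(\varphi-M)_+\|_1$, and then compare $\mathrm{Tr}\,h(T_N\varphi)$ with $\mathrm{Tr}\,h(T_N\varphi_M)$ through a concave modulus of continuity plus Jensen, reducing everything in one sweep to the bounded second version (Theorem~\ref{second_version}). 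This is more elementary than the paper's route and would carry over verbatim to $\T^d$ with Theorem~\ref{second_T_d} in place of Theorem~\ref{second_version}, whereas the paper's route has the virtue of passing through the first (determinant) version, which it wants anyway. The Weyl coupling, the bound $\alpha_k\le 2(\alpha_k-\beta_k)$ on $\{\alpha_k>2M\}$, the estimate $\int_{\varphi>2M}\varphi\,dm_1\le 2\|(\varphi-M)_+\|_1$, and the passage from a subadditive to a concave modulus are all standard and correctly applied; the $\varepsilon/3$ combination closes the argument.
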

\noindent
In general, the first Szeg\H{o} limit theorem characterizes asymptotic spectral distributions for Toeplitz matrices defined by symbols on the unit circle $\mathbb{T}$. More precisely, Let $\{\lambda_{j,N}\}_{j=1}^N$ denote the eigenvalues of $T_N\varphi$ counted with multiplicities. Then the first version shows that the geometric mean of the eigenvalues $\{\lambda_{j, N}\}_{j=1}^N$ converges to $\exp(\int \log\varphi ~dm_1)$, that is,
$$
\lim_{N\rightarrow+\infty}\left(\prod_{j=1}^N\lambda_{j,N}\right)^{\frac{1}{N}}=\exp\left(\int_{\mathbb{T}}\log\varphi~dm_1\right).
$$
The other two versions indicate that
$$
\frac{1}{N}\sum_{j=1}^N\delta_{\lambda_{j,N}}\rightarrow \varphi_* m_1,~~~~\text{in the weak* topology},
$$
where $\varphi_* m_1$ is the push-forward of $m_1$, that is, $\varphi_*m_1(\Delta)=m_1(\varphi^{-1}(\Delta))$ for any Borel set $\Delta\subset[\mathrm{ess\text{-}inf}~\varphi,\mathrm{ess\text{-}sup}~\varphi]$.
Roughly speaking, these three versions are not equivalent to each other, for which each version has distinct assumptions for symbols.

In this paper, motivated by \cite{2021SZEGO,nikolski2021szego}, we will generalize the first Szeg\H{o} limit theorem on the unit circle $\mathbb{T}$ to Toeplitz matrices defined by symbols on $d$-torus $\mathbb{T}^d$, $1\leq d\leq +\infty$. In particular, the case $d=+\infty$ exactly corresponds to multiplicative Toeplitz matrices that have constantly received extensive attention (see \cite{hilberdink2017,nikolski2021szego,hilberdink2023}).

The $d$-torus $\mathbb{T}^d$ is a compact Abelian group in multiplication. Let us first recall some results on Szeg\H{o} theorems associated with compact Abelian groups.

Let $G$ be a compact Abelian group and let $m$ be its normalized Haar measure. The dual group of $G$ is denoted by $\ghat$. For any $\varphi\in L^1(G)$, the Laurent matrix with symbol $\varphi$ is defined as
$$
L\varphi=\left\{\widehat{\varphi}(\xi -\gamma)\right\}_{ \gamma, \xi \in \ghat }.
$$
It is known that $L\varphi$ is bounded on $L^2(G)$ if and only if $\varphi \in L^\infty(G)$. In this case, $L\varphi$
is the matrix form of $M(\varphi)$, the operator of multiplication defined by $\varphi$ on $L^2(G)$.
Given any finite subset $\sigma=\{\xi_1\cdots,\xi_N\}\subset\ghat$, the truncated  matrix with respect to $\sigma$ is
$$T_\sigma \varphi =\left\{\widehat{\varphi}(\xi_j -\xi_i)\right\}_{1\leq i, j \leq N}.$$
Here we use the same notation as Toeplitz matrices for simplicity. A sequence of finite subsets $\sn\subset\ghat$ is called a \textit{F{\o}lner sequence} of $\ghat$ if for any $\xi\in\ghat$
\begin{equation*}
    \lim_{N\rightarrow +\infty}\frac{\left|(\xi+\sn)\cap\sn\right|}{\nsn}= 1,
\end{equation*}
where $\left|\sigma\right|$ is the number of elements in $\sigma$. The space of all real-valued functions in $L^p(G)$ is denoted by $L^p_{\mathbb{R}}(G)$.

Building on an idea of Arveson \cite{arveson1994c}, B\'edos proved a Szeg\H{o}-type theorem on compact Abelian groups, which reads as follows.
\begin{thm}\label{szego_thm_group}\cite[Theorem 11]{bedos1997folner}
    Let $\varphi\in\lifr(G)$ and suppose that $\{\sn\}$ is a F{\o}lner sequence of $\ghat$. Then for any $f\in C[\mathrm{ess\text{-}inf}~\varphi,\mathrm{ess\text{-}sup}~\varphi]$, it holds that
    \begin{equation}\label{g_Folner_limit}
        \lim_{N\rightarrow +\infty}\frac{1}{\nsn}\mathrm{Tr} f\big( \tsn \varphi\big)=\int_G f(\varphi) ~dm.
    \end{equation}
\end{thm}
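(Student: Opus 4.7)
The plan is to identify $\tsn\varphi$ with the compression $P_{\sn} M(\varphi) P_{\sn}$, where $M(\varphi)$ denotes multiplication by $\varphi$ on $L^2(G)$ and $P_{\sn}$ is the orthogonal projection onto $\mathrm{span}\{\xi : \xi\in \sn\}$. Since $\varphi$ is real-valued, $M(\varphi)$ is self-adjoint with spectrum in $[\mathrm{ess\text{-}inf}~\varphi, \mathrm{ess\text{-}sup}~\varphi]$, and so is every compression $\tsn\varphi$; in particular the eigenvalues of $\tsn\varphi$ also lie in this interval. Both sides of \eqref{g_Folner_limit} are therefore $\|\cdot\|_\infty$-Lipschitz in $f\in C[\mathrm{ess\text{-}inf}~\varphi, \mathrm{ess\text{-}sup}~\varphi]$, so Weierstrass approximation reduces the proof to the monomial case $f(x) = x^k$, $k\ge 1$.

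Set $A = M(\varphi)$, $P = P_{\sn}$, $Q = I - P$. Expanding in the character basis gives $\tr(PA^k P) = \nsn \int_G \varphi^k\,dm$, so the monomial case is equivalent to
\[
\lim_{N \to \infty} \frac{1}{\nsn}\,\tr\bigl(PA^k P - (PAP)^k\bigr) = 0.
\]
Inserting $I = P + Q$ between consecutive $A$'s in $PA^k P$ and grouping the resulting $2^{k-1}$ terms according to the position of the leftmost $Q$ yields the telescoping identity
\[
PA^k P - (PAP)^k \;=\; \sum_{j=1}^{k-1} (PAP)^{j-1}\,PA\,Q\,A^{k-j}P.
\]
Each summand factors as $X_j Y_j$ with $X_j := (PAP)^{j-1}PA$ and $Y_j := Q A^{k-j} P$. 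Since $X_j$ has rank at most $\nsn$ and operator norm at most $\|\varphi\|_\infty^{\,j}$, one has $\|X_j\|_{\mathrm{HS}} \leq \nsn^{1/2}\|\varphi\|_\infty^{\,j}$, and the estimate $|\tr(X_j Y_j)| \leq \|X_j\|_{\mathrm{HS}}\|Y_j\|_{\mathrm{HS}}$ reduces the task to showing $\|Y_j\|_{\mathrm{HS}}^{2} = o(\nsn)$.

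This last step is exactly where the F\o lner hypothesis is used. For any $\psi\in L^2(G)$, expanding $M(\psi)\eta = \sum_\xi \widehat{\psi}(\xi-\eta)\,\xi$ in characters and changing variable $\zeta = \xi - \eta$ gives
\[
\frac{1}{\nsn}\,\|Q M(\psi) P\|_{\mathrm{HS}}^{2} \;=\; \sum_{\zeta \in \ghat} |\widehat{\psi}(\zeta)|^{2}\, \frac{|\sn \setminus (\sn - \zeta)|}{\nsn}.
\]
The F\o lner condition is precisely the assertion that each weight $|\sn \setminus (\sn - \zeta)|/\nsn$ tends to $0$ pointwise in $\zeta$; since the summands are dominated by $|\widehat{\psi}(\zeta)|^{2}$ with summable bound $\|\psi\|_2^{2}$, dominated convergence yields $\|QM(\psi)P\|_{\mathrm{HS}}^2 = o(\nsn)$. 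Applying this with $\psi = \varphi^{k-j}\in L^\infty(G)\subset L^2(G)$ for each $j$ completes the argument. I expect this F\o lner-based Hilbert--Schmidt estimate to be the main content of the proof, as it is the unique place where the hypothesis on $\{\sn\}$ enters; the telescoping expansion, the trace bound, and the Weierstrass reduction are all routine once that lemma is available.
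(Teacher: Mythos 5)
The paper quotes this theorem from B\'edos \cite{bedos1997folner} and does not reprove it, so there is no internal argument to compare against; judged on its own terms, your proof is correct. You have reconstructed what is essentially the Arveson--B\'edos quasidiagonality argument. The computation
\[
\frac{1}{\nsn}\left\|Q\,M(\psi)\,P_{\sn}\right\|_{\mathrm{HS}}^{2}=\sum_{\zeta\in\ghat}|\widehat\psi(\zeta)|^{2}\,\frac{|\sn\setminus(\sn-\zeta)|}{\nsn}
\]
is right; each weight lies in $[0,1]$ and tends to $0$ (since $|\sn\cap(\sn-\zeta)|=|(\zeta+\sn)\cap\sn|$, the F\o lner condition applies directly), so dominated convergence over $\ghat$ gives $\|Q M(\psi) P_{\sn}\|_{\mathrm{HS}}^{2}=o(\nsn)$ for every $\psi\in L^{2}(G)$. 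The telescope $PA^{k}P-(PAP)^{k}=\sum_{j=1}^{k-1}(PAP)^{j-1}PAQA^{k-j}P$ checks out, the bound $\|X_j\|_{\mathrm{HS}}\le\nsn^{1/2}\|\varphi\|_{\infty}^{j}$ (rank at most $\nsn$, operator norm at most $\|\varphi\|_\infty^j$) is correct, and Cauchy--Schwarz for the trace then forces $\frac{1}{\nsn}\mathrm{Tr}\bigl(PA^{k}P-(PAP)^{k}\bigr)\to 0$. Together with $\mathrm{Tr}(PA^{k}P)=\nsn\int_G\varphi^{k}\,dm$ and the Weierstrass reduction (legitimate because the spectra of $\tsn\varphi$ and $M(\varphi)$ both lie in $[\mathrm{ess\text{-}inf}\,\varphi,\mathrm{ess\text{-}sup}\,\varphi]$, so $f\mapsto$ either side of \eqref{g_Folner_limit} is $1$-Lipschitz in $\|\cdot\|_\infty$), this proves the claim. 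This is precisely the mechanism behind \cite[Theorem 11]{bedos1997folner} and its $C^*$-algebraic precursor \cite{arveson1994c}, so you are not on a genuinely different route from the cited source -- but the write-up is complete and correct.
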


\noindent Applying Theorem \ref{szego_thm_group} to $\mathbb{T}^d$ for $1\leq d\leq +\infty$ gives the following theorem.
\begin{thm}\cite[Theorem 3.1]{2021SZEGO}\label{second_T_d}
        Let $\varphi\in L^\infty_{\mathbb{R}}(\mathbb{T}^d)$ and suppose that $\{\sn\}$ is a F{\o}lner sequence of $\mathbb{Z}^d$. Then for any $f\in C[\mathrm{ess\text{-}inf}~\varphi,\mathrm{ess\text{-}sup}~\varphi]$, one has that
    \begin{equation}\label{second_T_d_limit}
        \lim_{N\rightarrow +\infty}\frac{1}{\nsn}\mathrm{Tr} f\big( \tsn \varphi\big)=\int_{\mathbb{T}^d} f(\varphi) ~dm_d,
    \end{equation}
    where $m_d$ is the normalized Haar measure on $\mathbb{T}^d$.
\end{thm}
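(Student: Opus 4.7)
The plan is to derive Theorem \ref{second_T_d} as an immediate specialization of Theorem \ref{szego_thm_group}. What must be checked is that $\mathbb{T}^d$, for $1\leq d\leq +\infty$, fits the framework of a compact Abelian group whose dual is $\mathbb{Z}^d$, with normalized Haar measure $m_d$, and that under this identification the truncated matrix $T_{\sigma_N}\varphi$ and the Fourier coefficients $\widehat{\varphi}(k)$ coincide with the objects appearing in Theorem \ref{szego_thm_group}.

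For $d$ finite, the identifications are standard: $\mathbb{T}^d$ with coordinatewise multiplication is a compact Abelian group, its normalized Haar measure is the $d$-fold product $m_d$, and its Pontryagin dual is $\mathbb{Z}^d$ with characters $z \mapsto z^k$, $k \in \mathbb{Z}^d$. The Fourier coefficient $\widehat{\varphi}(k)$ and the truncation $T_{\sigma_N}\varphi = \{\widehat{\varphi}(\xi_j - \xi_i)\}$ are then exactly the objects treated in Theorem \ref{szego_thm_group}, and \eqref{second_T_d_limit} is the content of \eqref{g_Folner_limit} read on this group.

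For $d = +\infty$, I would verify, in order, that $\mathbb{T}^\infty$ with the product topology is compact (by Tychonoff) and Abelian, that its normalized Haar measure is the product measure $m_\infty$, and that its dual group is $\mathbb{Z}^\infty$ understood here as the group of finitely supported integer sequences --- since continuity at the identity forces any character of $\mathbb{T}^\infty$ to factor through a finite-coordinate projection. Together with the observation that the Fourier coefficients and truncated matrices again coincide with those in Theorem \ref{szego_thm_group}, the conclusion \eqref{second_T_d_limit} follows directly from \eqref{g_Folner_limit}.

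The only real subtlety, and thus the one point to be careful about, is the identification of the dual of $\mathbb{T}^\infty$ and of the associated Fourier pairing on the infinite torus; once this identification is made precise, no additional analytic work is required beyond what is already contained in Theorem \ref{szego_thm_group}.
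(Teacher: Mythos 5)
Your proposal is exactly the route the paper takes: the paper introduces Theorem \ref{second_T_d} with the single sentence ``Applying Theorem \ref{szego_thm_group} to $\mathbb{T}^d$ for $1\leq d\leq +\infty$ gives the following theorem,'' i.e., it is a direct specialization of B\'edos' result to $G=\mathbb{T}^d$, $\widehat{G}=\mathbb{Z}^d$, with no further argument supplied. Your additional care in identifying the dual of $\mathbb{T}^\infty$ as the finitely supported integer sequences is a correct and useful elaboration of what the paper leaves implicit.
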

\noindent
This is exactly the counterpart of Theorem \ref{second_version} in the case  $\T^d$. Naturally,  problems are left to find the analogs for the first and the third versions of the first Szeg\H{o} limit theorems on $d$-torus $\mathbb{T}^d$, $1\leq d\leq +\infty$, which is the focus of this paper.

We first establish the following inequality.
\begin{thm}\label{bdd_below_thm}
    Let $\varphi\in L^1_+(\mathbb{T}^d)$, $1\leq d\leq+\infty$, and let $\sigma$ be a finite subset of $\mathbb{Z}^d$, then
    \begin{equation}\label{bdd_bdd_below_equ}
        \exp\left(\int_{\mathbb{T}^d}\log\varphi~dm_d\right)\leq\left(\det T_\sigma \varphi\right)^{\frac{1}{|\sigma|}}\leq \|\varphi\|_1.
    \end{equation}
\end{thm}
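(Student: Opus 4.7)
The plan is to view $T_\sigma\varphi$ as a positive semidefinite matrix coming from a natural quadratic form and then to read both inequalities off its spectral structure. Writing $\sigma=\{\xi_1,\dots,\xi_N\}$ with $N=|\sigma|$, a direct computation gives
\begin{equation*}
c^{*}T_{\sigma}\varphi\,c = \int_{\mathbb{T}^d}\varphi(z)\bigg|\sum_{i=1}^N c_i z^{\xi_i}\bigg|^2 dm_d \geq 0, \qquad c\in\mathbb{C}^N,
\end{equation*}
so $T_\sigma\varphi$ is positive semidefinite, and taking $c$ to be a standard basis vector shows that every diagonal entry equals $\widehat{\varphi}(0)=\|\varphi\|_1$. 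The upper bound then falls out of the AM--GM inequality applied to the eigenvalues $\lambda_1,\dots,\lambda_N\geq 0$:
\begin{equation*}
(\det T_\sigma\varphi)^{1/N} = \bigg(\prod_{k=1}^N \lambda_k\bigg)^{1/N} \leq \frac{1}{N}\sum_{k=1}^N \lambda_k = \frac{1}{N}\mathrm{Tr}(T_\sigma\varphi) = \|\varphi\|_1.
\end{equation*}

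For the lower bound I plan to spectrally decompose $T_\sigma\varphi = UDU^{*}$ with $U$ unitary and $D=\mathrm{diag}(\lambda_1,\dots,\lambda_N)$, and to introduce the functions $v_k(z):=\sum_i U_{ik}\,z^{\xi_i}\in L^2(\mathbb{T}^d)$ associated to the columns of $U$. Unitarity supplies two facts I will need: the family $\{v_k\}$ is orthonormal in $L^2(\mathbb{T}^d)$, so each $|v_k|^2\,dm_d$ is a probability measure, and a pointwise ``reproducing kernel'' identity
\begin{equation*}
\sum_{k=1}^N |v_k(z)|^2 = \sum_{i,j}(UU^{*})_{ij}\,z^{\xi_i-\xi_j} = N
\end{equation*}
holds. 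A direct calculation also gives $\lambda_k = u_k^{*}T_\sigma\varphi\,u_k = \int_{\mathbb{T}^d}\varphi\,|v_k|^2\,dm_d$. Jensen's inequality applied to the concave function $\log$ against each probability density $|v_k|^2\,dm_d$ yields $\log\lambda_k \geq \int(\log\varphi)\,|v_k|^2\,dm_d$; summing over $k$ and using the identity above converts the right-hand side into $N\int\log\varphi\,dm_d$, producing
\begin{equation*}
\log\det T_\sigma\varphi \geq N\int_{\mathbb{T}^d}\log\varphi\,dm_d,
\end{equation*}
which is exactly the desired lower bound after exponentiation.

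The main subtlety I anticipate is the possible degeneracy of the logarithm. If $\det T_\sigma\varphi = 0$ or $\int\log\varphi\,dm_d=-\infty$, the claimed inequality reduces to $(\det T_\sigma\varphi)^{1/N}\geq 0$, which is immediate from positive semidefiniteness; moreover these two degenerate cases coincide, since $\det T_\sigma\varphi = 0$ forces a nonzero $c$ with $\int\varphi|\sum_i c_i z^{\xi_i}|^2\,dm_d = 0$ and hence $\varphi$ must vanish on a set of positive measure. In the remaining case, all $\lambda_k>0$, and since $\log^{+}\varphi\leq\varphi\in L^1$, every integral above is well defined in $[-\infty,\infty)$, so the chain of inequalities goes through without further modification.
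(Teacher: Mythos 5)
Your proof is correct, and it takes a genuinely different and considerably shorter route than the paper's. The paper proves the lower bound by induction on the number $n(\sigma)$ of coordinates that $\sigma$ involves: each step requires an outer-type factorization $\varphi\approx|\varphi_r|^2$ in a single variable via the Poisson kernel, the operator inequality $T_\sigma|\varphi_r|^2\geq (T_\sigma\varphi_r)^*(T_\sigma\varphi_r)$, and a block upper-triangular form of $T_\sigma\varphi_r$ whose diagonal blocks reduce to the inductive hypothesis on $m-1$ variables; moreover the argument is carried out first for $\varphi\in L^\infty_+$ and then extended to $L^1_+$ by monotone truncation. You instead apply Jensen's inequality directly to the spectral decomposition: each eigenvalue is an average $\lambda_k=\int_{\mathbb{T}^d}\varphi\,|v_k|^2\,dm_d$ against a probability density, and the partition-of-unity identity $\sum_k|v_k|^2\equiv|\sigma|$, which is nothing but $UU^*=I$, converts the summed Jensen bounds into $\sum_k\log\lambda_k\geq|\sigma|\int\log\varphi\,dm_d$. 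This bypasses the induction, the outer-function machinery, and the $L^\infty\to L^1$ extension entirely; since each $v_k$ is a trigonometric polynomial (bounded, depending on finitely many variables even when $d=\infty$), the argument handles $\varphi\in L^1_+$ in one pass.

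Two small remarks. With the paper's convention $(T_\sigma\varphi)_{ij}=\widehat\varphi(\xi_j-\xi_i)$ and $\widehat\varphi(\kappa)=\int\varphi\,\bar z^\kappa\,dm_d$, the quadratic form is $u_k^*T_\sigma\varphi\,u_k=\int\varphi\big|\sum_j\overline{U_{jk}}\,z^{\xi_j}\big|^2\,dm_d$, so $v_k$ should carry a conjugate; this is harmless since $\overline U$ is also unitary and both the orthonormality and the identity $\sum_k|v_k|^2=|\sigma|$ survive. Also, your side remark that the degenerate cases ``coincide'' overstates things: $\int\log\varphi\,dm_d=-\infty$ does not force $\det T_\sigma\varphi=0$. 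But the implication you actually use, that $\det T_\sigma\varphi=0$ forces $\int\log\varphi\,dm_d=-\infty$, is correct (a nonzero trigonometric polynomial vanishes on a null set, so $\int\varphi|p|^2\,dm_d=0$ gives $\varphi=0$ a.e.), and that is all the argument needs.
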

\noindent
By Theorem \ref{bdd_below_thm} and a cluster-point argument, we will prove the following theorem, which corresponds to Theorem \ref{first_version} with symbol $\varphi\in L^1_+(\mathbb{T})$.
\begin{thm}\label{first_T_d}
    Let $\varphi\in L^1_+(\mathbb{T}^d)$, $1\leq d\leq+\infty$, and let $\{\sn\}$ be a F{\o}lner sequence of $\mathbb{Z}^d$. Then
    \begin{equation}\label{first_T_d_limit}
        \lim_{N\rightarrow +\infty}\left(\det T_{\sn}\varphi\right)^{\frac{1}{\nsn}}=\exp\left(\int_{\mathbb{T}^d}\log\varphi~dm_d\right).
    \end{equation}
\end{thm}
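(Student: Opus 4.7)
The plan is to split the claim into matching $\liminf$ and $\limsup$ bounds. The direction $\liminf_N (\det T_{\sigma_N}\varphi)^{1/|\sigma_N|} \geq \exp(\int_{\T^d}\log\varphi\,dm_d)$ is immediate from the left inequality in Theorem \ref{bdd_below_thm}; its right inequality also shows that the sequence is bounded above by $\|\varphi\|_1$, so cluster points exist, and by the cluster-point argument it suffices to prove $\limsup_N (\det T_{\sigma_N}\varphi)^{1/|\sigma_N|} \leq \exp(\int_{\T^d}\log\varphi\,dm_d)$. I would obtain this through three successive reductions: bounded symbols, then $L^1_+$ symbols with positive essential infimum, then general $L^1_+$ symbols.

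First, for $\varphi \in L^\infty_+(\T^d)$, I would apply Theorem \ref{second_T_d} to $f_\epsilon(x) = \log\max(x,\epsilon) \in C[0,\|\varphi\|_\infty]$. The eigenvalues $\lambda_j$ of $T_{\sigma_N}\varphi$ are strictly positive (nontrivial trigonometric polynomials vanish only on null sets) and $f_\epsilon(t) \geq \log t$ for $t>0$, so
$$
\log\det T_{\sigma_N}\varphi \;=\; \sum_j \log\lambda_j \;\leq\; \mathrm{Tr}\,f_\epsilon(T_{\sigma_N}\varphi).
$$
Dividing by $|\sigma_N|$ and invoking Theorem \ref{second_T_d} gives $\limsup_N \frac{1}{|\sigma_N|}\log\det T_{\sigma_N}\varphi \leq \int_{\T^d} f_\epsilon(\varphi)\,dm_d$; sending $\epsilon \to 0^+$ and using monotone convergence of $f_\epsilon(\varphi) \downarrow \log\varphi$ (with integrable majorant $\log\max(\varphi,1)$) yields the bounded case.

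The principal obstacle is the passage from bounded to $L^1_+$ symbols, since the functional-calculus route above is unavailable when $\varphi$ is unbounded. I would handle this by a perturbation estimate in two steps. Assume first that $\mathrm{ess\text{-}inf}\,\varphi \geq \delta > 0$, and set $\varphi_n = \min(\varphi,n) \in L^\infty_+$. Writing $T_{\sigma_N}\varphi = T_{\sigma_N}\varphi_n + T_{\sigma_N}(\varphi-\varphi_n)$ with $T_{\sigma_N}\varphi_n \geq \delta I$ invertible and both summands positive, the elementary inequality $\det(A+B) \leq \det(A)\exp(\|A^{-1}\|\,\mathrm{Tr}\,B)$ for positive operators with $A$ invertible yields
$$
\frac{1}{|\sigma_N|}\log\det T_{\sigma_N}\varphi \;\leq\; \frac{1}{|\sigma_N|}\log\det T_{\sigma_N}\varphi_n + \delta^{-1}\|\varphi-\varphi_n\|_1,
$$
using that $\mathrm{Tr}\,T_{\sigma_N}(\varphi-\varphi_n) = |\sigma_N|\,\|\varphi-\varphi_n\|_1$. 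Letting $N \to \infty$ by the bounded case and then $n \to \infty$ (monotone convergence $\int\log\varphi_n \uparrow \int\log\varphi$, and $\|\varphi-\varphi_n\|_1 \to 0$ by dominated convergence) gives the upper bound whenever $\mathrm{ess\text{-}inf}\,\varphi > 0$. For an arbitrary $\varphi \in L^1_+(\T^d)$, I would apply this to $\varphi+\epsilon$: since $T_{\sigma_N}(\varphi+\epsilon) = T_{\sigma_N}\varphi + \epsilon I$ we have $\det T_{\sigma_N}\varphi \leq \det T_{\sigma_N}(\varphi+\epsilon)$, and then send $\epsilon \to 0^+$ using monotone convergence $\int\log(\varphi+\epsilon) \downarrow \int\log\varphi$ (the possibly $-\infty$ limit controlled by the integrable majorant $\log(\varphi+1) \leq \varphi$), which completes the argument.
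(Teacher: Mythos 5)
Your proof is correct, and it takes a genuinely different and noticeably more elementary route than the paper. The paper also first isolates $\liminf\geq\exp(\int\log\varphi)$ from Theorem~\ref{bdd_below_thm} and boundedness from the same theorem, but then handles the upper bound by a much heavier machinery: it picks a cluster point $\alpha$, splits $\varphi=\phi-\psi$ with $\phi=\max(\varphi,1)$ and $\psi\in[0,1]$, writes $\det T_{\sigma_N}\varphi=\det T_{\sigma_N}\phi\cdot\det(I-C)$ with $C=(T_{\sigma_N}\phi)^{-1/2}T_{\sigma_N}\psi(T_{\sigma_N}\phi)^{-1/2}$, expands $-\log(I-C)=\sum_n C^n/n$, and then invokes Fatou's lemma in $\ell^1(\mathbb N)$; making the termwise limits work requires Lemma~\ref{g_over_f}, which in turn uses Lemma~\ref{one_over_f}, Lemma~\ref{prod_lmm}, Theorem~\ref{third_T_d} and Corollary~\ref{Folner_thm_first}. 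Your version reaches the same upper bound with none of that: for bounded $\varphi$ you bound $\frac{1}{|\sigma_N|}\log\det T_{\sigma_N}\varphi$ by $\frac{1}{|\sigma_N|}\mathrm{Tr}\,f_\epsilon(T_{\sigma_N}\varphi)$ with $f_\epsilon(x)=\log\max(x,\epsilon)$, apply Theorem~\ref{second_T_d} (which is a quoted result in the paper anyway), and let $\epsilon\to0$ by monotone convergence; you then pass to $L^1_+$ symbols with $\mathrm{ess\text{-}inf}>0$ via the clean inequality $\det(A+B)\leq\det(A)\exp(\|A^{-1}\|\mathrm{Tr}\,B)$, and finally handle the general case by shifting to $\varphi+\epsilon$. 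The one small point worth stating explicitly is that the case $\varphi\equiv0$ must be disposed of first (both sides vanish), since the strict positivity of the eigenvalues of $T_{\sigma_N}\varphi$ needs $\varphi\not\equiv0$ together with the fact that nonzero trigonometric polynomials (which depend on only finitely many coordinates even on $\T^\infty$) vanish only on null sets. With that caveat, your argument is complete and replaces the paper's series-plus-Fatou machinery with two elementary determinant/functional-calculus estimates, which is a real simplification.
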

\noindent When $d=+\infty$, Theorem \ref{first_T_d} positively answers one problem posed by Nikolski and Pushnitski in \cite{2021SZEGO}. In particular, if $\varphi=|f|$ for some $f\in H^1(\mathbb{T}^d)$, then $\log\varphi\in L^1(\mathbb{T}^d)$ (for the case  $\ti$, see \cite[Corollary 2]{aleman2019fatou}), and hence the limit is non-zero.

\begin{thm}\label{third_T_d}
     Let $\varphi\in L^1_+(\mathbb{T}^d)$, $1\leq d\leq+\infty$, and let $\{\sn\}$ be a F{\o}lner sequence of $\mathbb{Z}^d$.  If $f\in C[0,\infty)$ such that $\lim\limits_{x\rightarrow +\infty} \frac{f(x)}{x}$ is finite,
    then
    \begin{equation}\label{third_T_d_limit}
    \lim_{N\rightarrow +\infty}\frac{1}{\nsn}\mathrm{Tr} f\bigls\tsn\varphi\bigrs=\int_{\mathbb{T}^d}f(\varphi)~dm_d.
    \end{equation}
\end{thm}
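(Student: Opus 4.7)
The plan is to deduce Theorem \ref{third_T_d} from the second version (Theorem \ref{second_T_d}) by truncating the symbol and separating off the linear part of $f$.

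First, let $c:=\lim_{x\to+\infty}f(x)/x$ and write $f(x)=cx+g(x)$, where $g\in C[0,\infty)$ satisfies $g(x)/x\to 0$ as $x\to+\infty$. Since every diagonal entry of $\tsn\varphi$ equals $\widehat{\varphi}(0)=\int_{\T^d}\varphi\,dm_d$, the linear term contributes exactly $c\int_{\T^d}\varphi\,dm_d$ to $\frac{1}{\nsn}\mathrm{Tr}\,f(\tsn\varphi)$, so it suffices to prove the identity for such sublinear $g$. For $M>0$ let $\varphi_M:=\min(\varphi,M)\in\lifr(\T^d)$; by Theorem \ref{second_T_d} applied to $\varphi_M$ and $g|_{[0,M]}$,
$$
\lim_{N\to\infty}\frac{1}{\nsn}\mathrm{Tr}\,g(\tsn\varphi_M)=\int_{\T^d}g(\varphi_M)\,dm_d,
$$
and the bound $|g(x)|\le K(1+x)$ with $\varphi\in L^1$ yields $\int_{\T^d}g(\varphi_M)\,dm_d\to\int_{\T^d}g(\varphi)\,dm_d$ by dominated convergence as $M\to+\infty$. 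It is therefore enough to show that
$$
D_{N,M}:=\frac{1}{\nsn}\bigl|\mathrm{Tr}\,g(\tsn\varphi)-\mathrm{Tr}\,g(\tsn\varphi_M)\bigr|
$$
satisfies $\limsup_N D_{N,M}\to 0$ as $M\to+\infty$.

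For the main estimate, both $\tsn\varphi$ and $\tsn\varphi_M$ are positive semi-definite compressions of the multiplication operators $M(\varphi)$ and $M(\varphi_M)$, and $\varphi_M\le\varphi$ implies $\tsn\varphi_M\le\tsn\varphi$; hence the decreasingly ordered eigenvalue sequences obey $0\le\mu_j^{(N)}\le\lambda_j^{(N)}$, and taking traces gives $\sum_j(\lambda_j^{(N)}-\mu_j^{(N)})=\nsn\cdot\|(\varphi-M)_+\|_1$. Fix $\epsilon>0$, pick $R$ with $|g(x)|\le\epsilon x$ for $x\ge R$, set $C_R:=\sup_{[0,R]}|g|$, and take $M\ge R$. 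On $\{j:\lambda_j^{(N)}>M\}$, the bound $|g(\lambda_j^{(N)})-g(\mu_j^{(N)})|\le C_R+2\epsilon\lambda_j^{(N)}$ together with the Markov bound $\#\{j:\lambda_j^{(N)}>M\}\le \nsn\|\varphi\|_1/M$ contributes at most $O(\epsilon)+O(C_R/M)$ to $D_{N,M}$. On $\{j:\lambda_j^{(N)}\le M\}$ (so that both eigenvalues lie in $[0,M]$), invoke the uniform-continuity modulus $\omega_M$ of $g$ on $[0,M]$: choose $\delta=\delta(\epsilon,M)$ with $\omega_M(\delta)\le\epsilon$ and further split according to whether $|\lambda_j^{(N)}-\mu_j^{(N)}|\le\delta$; the bad subset has cardinality at most $\nsn\|(\varphi-M)_+\|_1/\delta$, giving a contribution $\le\epsilon+2\|g\|_{L^\infty[0,M]}\|(\varphi-M)_+\|_1/\delta$.

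The main obstacle is the ordering of limits in this estimate: the modulus-of-continuity parameter $\delta$ depends on $M$, so one must choose $M$ large \emph{after} $\delta$ has been fixed (for the given $\epsilon$), and then invoke $\|(\varphi-M)_+\|_1\to 0$ to absorb the last term uniformly in $N$. The sublinear growth $g(x)/x\to 0$ is precisely what makes the large-eigenvalue part $O(\epsilon)$, while the $L^1$-convergence $\varphi_M\to\varphi$ handles the small-eigenvalue part. Sending $\epsilon\to 0$ at the end gives $\limsup_N D_{N,M}\to 0$, which combined with the reductions above completes the proof.
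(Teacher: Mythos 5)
Your route is genuinely different from the paper's. The paper first establishes the first version (Corollary~\ref{Folner_thm_first}) to handle $f(x)=\log(x+c)$ and then refers to Simon's argument (Theorem~2.7.13 of \cite{simon2005orthogonal}) for the passage to general sublinear $f$; you instead go directly from the bounded-symbol second version (Theorem~\ref{second_T_d}) to the integrable case by truncation of the symbol and a hands-on eigenvalue estimate. The reductions at the start (splitting off the linear part via the constant diagonal, applying Theorem~\ref{second_T_d} to $\varphi_M$, dominated convergence on $\int g(\varphi_M)$) are all correct, as is the eigenvalue inequality $\mu_j^{(N)}\le\lambda_j^{(N)}$ and the trace identity $\sum_j(\lambda_j^{(N)}-\mu_j^{(N)})=\nsn\|(\varphi-M)_+\|_1$.

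There is, however, a real gap in the resolution of the issue you yourself flag. Because you split the index set at $\lambda_j^{(N)}\le M$ and then use the modulus of continuity of $g$ on $[0,M]$, your $\delta=\delta(\epsilon,M)$ genuinely depends on $M$; it is therefore not possible to ``fix $\delta$ first and then choose $M$ large'' as the last paragraph proposes --- the construction and the proposed ordering of quantifiers are mutually incompatible. And one cannot simply push through anyway: the offending term is
$\|g\|_{L^\infty[0,M]}\|(\varphi-M)_+\|_1/\delta(\epsilon,M)$, and $\|(\varphi-M)_+\|_1\to 0$ can be arbitrarily slow (for instance when $m_d(\{\varphi>t\})\sim t^{-1}(\log t)^{-2}$), while $\|g\|_{L^\infty[0,M]}/\delta(\epsilon,M)$ can grow without control, so the product need not vanish. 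The fix is small but necessary: split at an $\epsilon$-dependent threshold $R$ rather than at $M$. Choose $R=R(\epsilon)$ with $|g(x)|\le\epsilon x$ for $x\ge R$ and also $\sup_{[0,R]}|g|\le\epsilon R$ (possible by sublinearity), and take $\delta=\delta(\epsilon)$ from the modulus of continuity of $g$ on $[0,R]$. On $\{\lambda_j^{(N)}>R\}$, Markov and the bound $|g(\lambda_j)-g(\mu_j)|\le C_R+2\epsilon\lambda_j$ give a contribution $\le C_R\|\varphi\|_1/R+2\epsilon\|\varphi\|_1\le 3\epsilon\|\varphi\|_1$; on $\{\lambda_j^{(N)}\le R\}$, the split at $|\lambda_j-\mu_j|\le\delta$ gives $\le \epsilon + 2C_R\|(\varphi-M)_+\|_1/\delta$, and now $C_R$ and $\delta$ are independent of $M$. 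Sending $M\to\infty$ first and then $\epsilon\to 0$ closes the argument. With that adjustment your proof is correct and constitutes a self-contained alternative to the paper's appeal to Simon's one-variable argument.
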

\noindent Theorem \ref{third_T_d} is exactly the third version of Szeg\H{o}'s theorem on $d$-torus $\mathbb{T}^d$ for $1\leq d\leq+\infty$, which corresponds to Theorem \ref{third_version}.

\vskip 2mm
As well known, the case $\ti$ is the most attractive in recent years (see \cite{seip2022,pushnitski2018,kosz2023,yan2022}). Recall that $\widehat{\ti}=\mathbb{Z}^\infty$. Here
 $$
 \mathbb{Z}^{\infty}=\{\kappa=(\kappa(1), \kappa(2), \ldots): \kappa(j)\in \mathbb{Z}, \text{ and only finitely many } \kappa(j) \text{ are non-zero}\}.
 $$
The set $\mathbb{Z}_{+}^{\infty}$ is defined to be
 $$
 \mathbb{Z}_{+}^{\infty}=\{\kappa\in\mathbb{Z}^\infty:\text{ all the }\kappa(j)\geq 0\}.
 $$
 As done for the classical Toeplitz matrices on the unit circle $\mathbb{T}$, in the case $\ti$,
  the Toeplitz matrices involved are labeled by $\mathbb{Z}_+^\infty$, i.e. $T\varphi=\{\widehat{\varphi}(\kappa'-\kappa)\}_{\kappa,\kappa'\in \mathbb{Z}^\infty_+}$. Recall that a finite or infinite matrix $M=\{m_{i,j}\}$ is a \textit{multiplicative Toeplitz matrix} if $m_{i,j}=m_{l,k}$ whenever $i/j=l/k$. In what follows we will rewrite $T\varphi$ as a multiplicative Toeplitz matrix. To be more precise, let $p_1, p_2, p_3, \dots$ be the ordered sequence of all prime numbers. The fundamental theorem of arithmetic shows that  every  $q\in \mathbb{Q}_+$ can be uniquely expressed as
$$q=p_1^{\alpha_1}p_2^{\alpha_2}\cdots=:\bm{P}^{\alpha(q)},$$ where $\alpha(q)=(\alpha_1, \alpha_2, , \ldots)\in \mathbb{Z}^{\infty}.$
It is easy to verify that
\begin{equation}\label{homomorphism}
    \alpha: (\mathbb{Q}_+, \times) \rightarrow (\mathbb{Z}^{\infty},+),~~~q \mapsto \alpha(q)
\end{equation}
is a group isomorphism.
For $\varphi \in L^1(\mathbb{T}^{\infty})$, via the above isomorphism, we can relabel the Fourier coefficients by $\mathbb{Q}_+$ and write it in boldface font as
$$\bm{\widehat{\varphi}}(q)=\widehat{\varphi}(\alpha(q)),
~~~q\in \mathbb{Q}_+.$$
This turns the Toeplitz matrix $T\varphi$ to the following multiplicative form
$$
\bm{T}(\bm{\varphi})=\{\bm{\widehat{\varphi}}(j/i)\}_{i,j \in \mathbb{N}},
$$
i.e. a multiplicative Toeplitz matrix.
The multiplicative Toeplitz matrices can be dated back to Toeplitz \cite{toeplitz1938theorie} and have been revitalized recently due to their close connection with Dirichlet series. We refer the readers to \cite{hedenmalm1995hilbert,bayart2002,seip2019} for a more in-depth discussion on this topic.

In the case of multiplication, a sequence $\{\sigma_N\}_{N=1}^{\infty}$ of finite sets of $\mathbb{N}$ is a \textit{multiplicative F{\o}lner sequence},
if for any $k\in \mathbb{N}$
$$
\frac{\left|(k\sigma_N) \cap \sigma_N\right|}{\left|\sigma_N\right|} \rightarrow 1,~~~\text{as}~N\rightarrow +\infty.
$$
In the case $d=+\infty$,  Theorem \ref{first_T_d} and Theorem \ref{third_T_d} can be revisited in a multiplicative form.
\begin{thm}\label{first_multi}
    Let $\varphi\in L^1_+(\mathbb{T}^\infty)$ and let $\{\sn\}$ be a multiplicative F{\o}lner sequence of $\mathbb{N}$. Then
    $$
    \lim_{N\rightarrow +\infty}\left(\det \bm{T_{\sn}\varphi}\right)^{\frac{1}{\nsn}}=\exp\left(\int_{\mathbb{T}^\infty}\log\varphi~dm_\infty\right).
    $$
\end{thm}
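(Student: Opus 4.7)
The plan is to transport the multiplicative problem to the additive setting of $\mathbb{Z}^\infty$ via the isomorphism $\alpha:(\mathbb{Q}_+,\times)\to(\mathbb{Z}^\infty,+)$ from \eqref{homomorphism}, and then invoke Theorem \ref{first_T_d} with $d=+\infty$. The identity $\bm{\widehat{\varphi}}(j/i)=\widehat{\varphi}(\alpha(j)-\alpha(i))$ shows that, for any finite $\sigma\subset\mathbb{N}$, the multiplicative Toeplitz matrix $\bm{T_\sigma\varphi}=\{\bm{\widehat{\varphi}}(j/i)\}_{i,j\in\sigma}$ coincides with the additive Toeplitz matrix $T_{\alpha(\sigma)}\varphi=\{\widehat{\varphi}(\alpha(j)-\alpha(i))\}_{i,j\in\sigma}$ up to a common relabelling of rows and columns. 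In particular $\det\bm{T_\sigma\varphi}=\det T_{\alpha(\sigma)}\varphi$ and $|\alpha(\sigma)|=|\sigma|$.

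Writing $\widetilde{\sigma}_N:=\alpha(\sigma_N)\subset\mathbb{Z}_+^\infty\subset\mathbb{Z}^\infty$, the second step is to verify that $\{\widetilde{\sigma}_N\}$ is a F{\o}lner sequence of $\mathbb{Z}^\infty$. Since $\alpha(k\sigma_N)=\alpha(k)+\widetilde{\sigma}_N$, the multiplicative F{\o}lner hypothesis translates into
\[
\lim_{N\to\infty}\frac{|(\alpha(k)+\widetilde{\sigma}_N)\cap\widetilde{\sigma}_N|}{|\widetilde{\sigma}_N|}=1,\qquad k\in\mathbb{N},
\]
equivalently, $|\widetilde{\sigma}_N\triangle(\eta+\widetilde{\sigma}_N)|/|\widetilde{\sigma}_N|\to 0$ for every $\eta\in\mathbb{Z}_+^\infty$. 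An arbitrary $\xi\in\mathbb{Z}^\infty$ decomposes as $\xi=\xi_+-\xi_-$ with $\xi_\pm\in\mathbb{Z}_+^\infty$, and combining the subadditivity estimate
\[
|A\triangle(\eta_1+\eta_2+A)|\leq|A\triangle(\eta_1+A)|+|A\triangle(\eta_2+A)|
\]
with the translation invariance $|A\triangle(-\eta+A)|=|(\eta+A)\triangle A|$ propagates the vanishing from $\mathbb{Z}_+^\infty$ to all of $\mathbb{Z}^\infty$, confirming that $\{\widetilde{\sigma}_N\}$ is a F{\o}lner sequence of $\mathbb{Z}^\infty$.

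With these two ingredients in hand, Theorem \ref{first_T_d} applied to $\varphi$ and the F{\o}lner sequence $\{\widetilde{\sigma}_N\}$ yields
\[
\bigl(\det\bm{T_{\sigma_N}\varphi}\bigr)^{1/|\sigma_N|}=\bigl(\det T_{\widetilde{\sigma}_N}\varphi\bigr)^{1/|\widetilde{\sigma}_N|}\longrightarrow \exp\Bigl(\int_{\mathbb{T}^\infty}\log\varphi\,dm_\infty\Bigr),
\]
which is the desired conclusion. The analytic content of the theorem is entirely absorbed by Theorem \ref{first_T_d}; the only step requiring real attention is the bookkeeping in paragraph two, which upgrades the multiplicative F{\o}lner condition (a priori only controlling translates by $\mathbb{Z}_+^\infty$) to a genuine additive F{\o}lner condition on the full group $\mathbb{Z}^\infty$. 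I expect this propagation argument to be the only non-cosmetic obstacle in the proof.
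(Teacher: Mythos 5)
Your proof is correct and follows the same route the paper implicitly takes: transport through the isomorphism $\alpha:(\mathbb{Q}_+,\times)\to(\mathbb{Z}^\infty,+)$ and invoke Theorem \ref{first_T_d}; the paper presents Theorem \ref{first_multi} as merely a multiplicative restatement of Theorem \ref{first_T_d} and does not spell out any details. Your propagation argument (upgrading the F{\o}lner condition from translates by $\mathbb{Z}_+^\infty$ to all of $\mathbb{Z}^\infty$ via $\xi=\xi_+-\xi_-$, the symmetric-difference triangle inequality, and $|A\triangle(-\eta+A)|=|A\triangle(\eta+A)|$) is exactly the detail the paper leaves unstated, and it is carried out correctly.
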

\begin{thm}\label{third_multi}
    Let $\varphi\in L^1_+(\mathbb{T}^\infty)$ and let $\{\sn\}$ be a multiplicative F{\o}lner sequence of $\mathbb{N}$. If $f\in C[0,\infty)$ such that $\lim\limits_{x\rightarrow +\infty} \frac{f(x)}{x}$ is finite,
    then
    $$
    \lim_{N\rightarrow +\infty}\frac{1}{\nsn}\mathrm{Tr} f\bigls\bm{\tsn\varphi}\bigrs=\int_{\mathbb{T}^\infty}f(\varphi)~dm_\infty.
    $$
\end{thm}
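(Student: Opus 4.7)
The plan is to reduce Theorem \ref{third_multi} to the additive case already handled by Theorem \ref{third_T_d} with $d=+\infty$, via the group isomorphism $\alpha:(\mathbb{Q}_+,\times)\to(\mathbb{Z}^\infty,+)$ appearing in \eqref{homomorphism}. Setting $\tilde{\sigma}_N:=\alpha(\sn)\subset\mathbb{Z}_+^\infty\subset\mathbb{Z}^\infty$, one has $|\tilde{\sigma}_N|=\nsn$ and, by the very definition of $\bm{\widehat{\varphi}}$,
$$\bm{T_{\sn}\varphi}=\left\{\widehat{\varphi}(\alpha(j)-\alpha(i))\right\}_{i,j\in\sn}=T_{\tilde{\sigma}_N}\varphi.$$
Hence $\mathrm{Tr}\,f(\bm{T_{\sn}\varphi})=\mathrm{Tr}\,f(T_{\tilde{\sigma}_N}\varphi)$, and the conclusion will follow from Theorem \ref{third_T_d} the moment we verify that $\{\tilde{\sigma}_N\}$ is a F{\o}lner sequence of the additive group $\mathbb{Z}^\infty$.

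This transfer of the F{\o}lner condition is the only substantive issue, since a general $\kappa\in\mathbb{Z}^\infty$ has negative coordinates while the multiplicative F{\o}lner hypothesis only sees translations by positive integers (one cannot freely divide inside $\mathbb{N}$). To handle it, I would fix $\kappa\in\mathbb{Z}^\infty$, write $\kappa=\alpha(a)-\alpha(b)$ with $a=\bm{P}^{\kappa^+}$ and $b=\bm{P}^{\kappa^-}$ in $\mathbb{N}$ (both well defined since $\kappa$ has finite support), and compute
$$|(\kappa+\tilde{\sigma}_N)\cap\tilde{\sigma}_N|=|\{n\in\sn:an/b\in\sn\}|=|a\sn\cap b\sn|,$$
where the first equality uses injectivity of $\alpha$ and the second follows from the bijection $n\mapsto an$ from $\sn$ onto $a\sn$. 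Inserting the common multiple $ab\sn$ gives the key triangle-style bound
$$|a\sn\triangle b\sn|\leq|a\sn\triangle ab\sn|+|ab\sn\triangle b\sn|=|\sn\triangle b\sn|+|a\sn\triangle\sn|,$$
where both right-hand terms are $o(\nsn)$ by the multiplicative F{\o}lner hypothesis applied to $b$ and to $a$ respectively. Since $|a\sn|=\nsn$, this forces $|a\sn\cap b\sn|/\nsn\to 1$, which is precisely the additive F{\o}lner property of $\{\tilde{\sigma}_N\}$ at the point $\kappa$.

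With the F{\o}lner transfer in hand, Theorem \ref{third_T_d} applied to $\varphi$, $f$, and $\{\tilde{\sigma}_N\}$ yields
$$\lim_{N\to\infty}\frac{1}{|\tilde{\sigma}_N|}\mathrm{Tr}\,f(T_{\tilde{\sigma}_N}\varphi)=\int_{\mathbb{T}^\infty}f(\varphi)\,dm_\infty,$$
and relabeling the left-hand side in multiplicative notation is exactly the desired conclusion. The same reduction, applied verbatim, also converts Theorem \ref{first_T_d} into Theorem \ref{first_multi}, confirming that both multiplicative statements are essentially repackagings of the additive ones through the isomorphism $\alpha$. The main obstacle is thus really just the bookkeeping in the F{\o}lner transfer; everything else is a definitional translation.
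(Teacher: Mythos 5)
Your proof is correct and follows exactly the route the paper intends: the paper states Theorems \ref{first_multi} and \ref{third_multi} as direct multiplicative reformulations of Theorems \ref{first_T_d} and \ref{third_T_d} via the isomorphism $\alpha$, and offers no further argument. The one genuinely substantive step, which the paper leaves unverified, is precisely the one you isolate and carry out correctly: that the image $\tilde\sigma_N=\alpha(\sigma_N)$ of a multiplicative F{\o}lner sequence is an additive F{\o}lner sequence of $\mathbb{Z}^\infty$, even at points $\kappa\in\mathbb{Z}^\infty$ with negative coordinates. Your decomposition $\kappa=\alpha(a)-\alpha(b)$ with $a,b\in\mathbb{N}$, the identification $|(\kappa+\tilde\sigma_N)\cap\tilde\sigma_N|=|a\sigma_N\cap b\sigma_N|$, and the symmetric-difference triangle inequality through the common multiple $ab\sigma_N$ are all valid (the key facts being that multiplication by a fixed positive integer is injective on $\mathbb{N}$, so symmetric differences are preserved). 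After that, relabeling $\bm{T_{\sigma_N}\varphi}=T_{\tilde\sigma_N}\varphi$ and invoking Theorem \ref{third_T_d} gives the conclusion, and, as you note, the identical argument yields Theorem \ref{first_multi} from Theorem \ref{first_T_d}.
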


\noindent
Our preceding results, Theorem \ref{first_multi} and Theorem \ref{third_multi}, heavily depend on F{\o}lner condition. In what follows  we consider the most concerned non-F{\o}lner sequence $\sn=\{1,\dots,N\},~N=1,2,\dots$, and the corresponding truncation
$$
\bm{T_N}\bm{\varphi}=\left\{\bm{\widehat{\varphi}}(j/i)\right\}_{i,j=1}^{N}.
$$
\begin{thm}\label{second_non_folner}
 Let $\varphi\in \lifr(\mathbb{T^{\infty}})$. Then there exists a probability measure $\mu$ on $[\mathrm{ess\text{-}inf}~\varphi,\mathrm{ess\text{-}sup}~\varphi]$ such that
for any $f\in C[\mathrm{ess\text{-}inf}~\varphi,\mathrm{ess\text{-}sup}~\varphi]$,
$$
\lim_{N\rightarrow +\infty} \frac{1}{N}\mathrm{Tr} f \big(\bm{T_N \varphi}\big)=\int f~d\mu.
$$
In particular, if $\varphi$ depends only on finitely many variables, the corresponding measure $\mu$ concentrates on a countable subset.
\end{thm}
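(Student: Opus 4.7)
The plan is to prove the statement by the moment method. Since $\bm{T_N\varphi}$ is a self-adjoint compression of multiplication by $\varphi$, its spectrum lies in $[\mathrm{ess\text{-}inf}\,\varphi, \mathrm{ess\text{-}sup}\,\varphi]$, and the empirical spectral measures $\mu_N := \frac{1}{N}\sum_{j=1}^{N} \delta_{\lambda_j(\bm{T_N\varphi})}$ are probability measures supported on that fixed compact interval. The family $\{\mu_N\}$ is therefore weak-$*$ tight, and by Weierstrass approximation it suffices to show that for every integer $k \geq 0$ the moment
\[
m_k(N) := \tfrac{1}{N}\,\mathrm{Tr}\bigl((\bm{T_N\varphi})^k\bigr)
\]
converges as $N \to \infty$; once this is known, $f \mapsto \lim_N \frac{1}{N}\mathrm{Tr}\bigl(f(\bm{T_N\varphi})\bigr)$ is a positive bounded linear functional on $C[\mathrm{ess\text{-}inf}\,\varphi, \mathrm{ess\text{-}sup}\,\varphi]$, and Riesz representation furnishes the desired $\mu$.

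For trigonometric polynomials the moment limit can be computed explicitly. Expanding
\[
\mathrm{Tr}\bigl((\bm{T_N\varphi})^k\bigr) = \sum_{(i_0,\ldots,i_{k-1}) \in \{1,\ldots,N\}^k} \bm{\widehat\varphi}(i_1/i_0)\,\bm{\widehat\varphi}(i_2/i_1) \cdots \bm{\widehat\varphi}(i_0/i_{k-1})
\]
and changing variables to the ratios $r_j = i_j/i_{j-1}$ (subject to $r_1\cdots r_k = 1$) yields a finite sum whose weights are the counts $C_N(\vec r) = \#\{i_0 \in \{1,\ldots,N\} : i_0 r_1\cdots r_j \in \{1,\ldots,N\}\cap\mathbb{N}\ \forall j\}$. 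A short combinatorial argument gives $C_N(\vec r) = N/(D(\vec r)\,S(\vec r)) + O(1)$, where $D(\vec r)$ is the lcm of the denominators of the partial products $r_1\cdots r_j$ and $S(\vec r) = \max_{0\leq j\leq k-1} r_1\cdots r_j$, so that $\lim_N m_k(N)$ exists in closed form on the trigonometric-polynomial class.

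To transfer this to arbitrary $\varphi \in \lifr(\mathbb{T}^\infty)$, I would approximate by trigonometric polynomials $\psi_n$ with $\|\psi_n\|_\infty \leq \|\varphi\|_\infty$ and $\psi_n \to \varphi$ in $L^2$ (via Fej\'er-type projections on $\mathbb{T}^\infty$, which are convolutions against positive product kernels and hence contractive on $L^\infty$). Writing $A = \bm{T_N\varphi}$, $B = \bm{T_N\psi_n}$, $E = A - B$ and telescoping $A^k - B^k = \sum_{j=0}^{k-1} A^j E B^{k-1-j}$, the estimate $|\mathrm{Tr}(XY)| \leq \|X\|_{S^1}\,\|Y\|$ combined with $\|E\|_{S^1} \leq \sqrt{N}\,\|E\|_{HS}$ and the elementary bound $\|E\|_{HS}^2 \leq N\,\|\varphi - \psi_n\|_2^2$ (which follows from $\#\{(i,j)\in\{1,\ldots,N\}^2 : j/i = q\} \leq N$) produces
\[
\bigl|m_k(N;\varphi) - m_k(N;\psi_n)\bigr| \leq k\,\|\varphi\|_\infty^{k-1}\,\|\varphi - \psi_n\|_2,
\]
\emph{uniformly in $N$}. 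Together with the convergence of $m_k(N;\psi_n)$ for each fixed $n$, this forces $\{m_k(N;\varphi)\}_N$ to be Cauchy, hence convergent. I expect this uniform-in-$N$ comparison of traces, in which the Hilbert--Schmidt/trace-norm book-keeping must absorb exactly two factors of $\sqrt{N}$ coming from the multiplicative structure and leave clean control by $\|\varphi - \psi_n\|_2$, to be the technically most delicate step.

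For the final assertion, if $\varphi$ depends only on $z_1,\ldots,z_D$ then $\bm{\widehat\varphi}(q) = 0$ unless $q$ lies in the multiplicative subgroup of $\mathbb{Q}_+$ generated by $p_1,\ldots,p_D$. Consequently $(\bm{T_N\varphi})_{i,j}$ vanishes whenever $i$ and $j$ have different parts coprime to $p_1\cdots p_D$, so $\bm{T_N\varphi}$ splits as an orthogonal direct sum of finite self-adjoint blocks indexed by residues $k$ coprime to $p_1\cdots p_D$, each block being determined by the $D$-smooth integers in $[1, N/k]$. Only countably many distinct block patterns arise as $N$ varies, each contributing finitely many eigenvalues, so $\mu$ is supported in the countable union of their spectra.
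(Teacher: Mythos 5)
Your proposal is essentially correct and its overall architecture matches the paper's: reduce to convergence of $\frac{1}{N}\mathrm{Tr}(\bm{T_N\varphi})^m$, approximate $\varphi$ in $L^2$ by simpler symbols contractively in $L^\infty$, and transfer via the telescoping identity plus Schatten norm book-keeping. In fact your estimate chain $|\mathrm{Tr}(XY)|\le\|X\|_{S^1}\|Y\|$, $\|E\|_{S^1}\le\sqrt{N}\|E\|_{HS}$, $\|E\|_{HS}^2\le N\|\varphi-\psi\|_2^2$ reproduces the paper's Lemma \ref{norm_two_controll} and the subsequent display verbatim. Where you genuinely diverge is in the base case. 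The paper approximates by conditional expectations $\varphi_k=E(\varphi\mid\mathcal F_k)$ onto functions of the first $k$ coordinates, then proves convergence for such $\varphi_k$ by decomposing $\bm{T_N\varphi_k}$ into a direct sum of blocks $T_{Y_{[N/m]}}\phi$ indexed by $m$ coprime to $p_1\cdots p_k$, invoking the fact that $\{Y_n\}$ is a F{\o}lner sequence of $\mathbb{Z}^k$ together with a Riemann-sum averaging lemma (Lemma \ref{con_lem}) to pass from the F{\o}lner limit along each block to the arithmetic average over $N$. You instead approximate by trigonometric polynomials and compute $\lim_N \frac{1}{N}\mathrm{Tr}(\bm{T_N\psi})^k$ directly by the cycle-counting argument, and your asymptotic $C_N(\vec r)=\lfloor N/(D(\vec r)S(\vec r))\rfloor$ with $D$ the lcm of denominators of the partial products and $S$ the maximum partial product is correct, because the set of admissible $i_0$ is precisely the multiples of $D$ not exceeding $N/S$; for a trigonometric polynomial the sum over $\vec r$ is finite so the $O(1)$ errors wash out after dividing by $N$. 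This is more elementary and bypasses the F{\o}lner machinery entirely for the existence of $\mu$.

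The one place where your writeup has a real gap is the countable-support assertion. Saying that each $\bm{T_N\varphi}$ is a direct sum of blocks from a countable family, so each empirical measure $\mu_N$ is supported on the countable set $\Lambda=\bigcup_n \mathrm{spec}\, T_{Y_n}\phi$, does not by itself imply that the weak-$*$ limit $\mu$ is supported on a countable set; a priori it could be spread over $\overline{\Lambda}$. What is actually needed, and what the paper supplies through Lemma \ref{con_lem}, is the convergence of the \emph{weights}: writing $\mu_N=\sum_n w_n(N)\,\nu_n$ with $\nu_n$ the normalized empirical spectral measure of $T_{Y_n}\phi$, one must show $w_n(N)\to w_n$ with $\sum_n w_n=1$, whence $\mu=\sum_n w_n\nu_n$ is a countable convex combination of discrete measures. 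The paper obtains explicitly $\int f\,d\mu=\frac{(p_1-1)\cdots(p_k-1)}{p_1\cdots p_k}\sum_{n\ge1}\frac{\mathrm{Tr}\,f(T_{Y_n}\phi)}{n(n+1)}$, which manifestly gives a purely atomic $\mu$. Your sketch has the right structure but should be completed by this weight computation (which is exactly a Riemann sum of $n\mapsto a_{[N/n]}$, the content of Lemma \ref{con_lem}).
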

\noindent
This can be regarded as the second version of Szeg\H{o}'s theorem for this non-F{\o}lner case, and affirmatively answers another problem posed by Nikolski and Pushnitski in \cite{2021SZEGO}.

As well known, there exists a deep connection between the determinant of $ \bm{T_N}\bm{\varphi}$ and the determinants of number-theoretic matrices (see \cite{hilberdink2006determinants,balazard2019fonctions}). As an application of Theorem \ref{second_non_folner}, we will prove the following theorem, which can be seen as the first version of Szeg\H{o}'s theorem in the case of $\T^\infty$.
\begin{thm}\label{first_non_folner}
If $\varphi\in L^1_+(\mathbb{T}^\infty)$ with $\mathrm{ess\text{-}inf}~\varphi>0$, then
     $$    \lim_{N\rightarrow +\infty}\left(\det\bm{T_N \varphi}\right)^{\frac{1}{N}}~\text{exists, and is finite}.
    $$
\end{thm}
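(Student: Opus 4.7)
The plan is to reduce to the bounded case already covered by Theorem \ref{second_non_folner} via a truncation argument, using the hypothesis $c := \mathrm{ess\text{-}inf}~\varphi > 0$ to keep the symbol bounded away from zero throughout. Set $\varphi_M = \min(\varphi, M)$ and $r_M = (\varphi - M)_+$, so $\varphi = \varphi_M + r_M$ with $\varphi_M \in \lifr(\ti)$, $r_M \geq 0$, and $\|r_M\|_1 \to 0$ as $M \to \infty$ by dominated convergence (since $r_M \leq \varphi \in L^1$).

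For each fixed $M$, the function $\log$ is continuous on $[c, M] \supset [\mathrm{ess\text{-}inf}~\varphi_M, \mathrm{ess\text{-}sup}~\varphi_M]$, so Theorem \ref{second_non_folner} applied to $\varphi_M$ with $f = \log$ gives that
$$
L_M := \lim_{N \rightarrow \infty} \frac{1}{N}\log \det \bm{T_N\varphi_M} = \lim_{N\rightarrow\infty} \frac{1}{N}\mathrm{Tr}~\log(\bm{T_N\varphi_M})
$$
exists. Since $\bm{T_N\psi}$ is the compression of $M(\psi)$ to a finite-dimensional subspace, the assignment $\psi \mapsto \bm{T_N\psi}$ preserves positivity; hence $\varphi_M \leq \varphi_{M'}$ implies $\bm{T_N\varphi_M} \leq \bm{T_N\varphi_{M'}}$, so $L_M \leq L_{M'}$. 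Combined with the uniform upper bound $L_M \leq \log\|\varphi_M\|_1 \leq \log\|\varphi\|_1$ from Theorem \ref{bdd_below_thm}, this shows $L_\infty := \lim_{M\to\infty} L_M$ exists and is finite. The same monotonicity applied to $\varphi_M \leq \varphi$ yields $\det\bm{T_N\varphi_M} \leq \det\bm{T_N\varphi}$, so
$$
\liminf_{N\to\infty} \frac{1}{N}\log\det\bm{T_N\varphi} \geq L_M \quad \text{for every } M, \qquad \text{hence} \qquad \liminf_{N\to\infty} \frac{1}{N}\log\det\bm{T_N\varphi} \geq L_\infty.
$$

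For the matching upper bound I would invoke the inequality $\det(A+B) \leq \det(A)\,\exp(\mathrm{Tr}(A^{-1}B))$, valid for positive definite $A$ and positive semidefinite $B$ (it follows from $1+\lambda \leq e^{\lambda}$ applied to the eigenvalues of $A^{-1/2}BA^{-1/2}$). Taking $A = \bm{T_N\varphi_M}$ and $B = \bm{T_N r_M}$, the bound $\bm{T_N\varphi_M} \geq cI_N$ gives $A^{-1} \leq c^{-1}I_N$, whence $\mathrm{Tr}(A^{-1}B) \leq c^{-1}\mathrm{Tr}(\bm{T_N r_M}) = c^{-1}N\|r_M\|_1$, since the diagonal entries of $\bm{T_N r_M}$ all equal $\bm{\widehat{r_M}}(1) = \|r_M\|_1$. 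Dividing by $N$ and sending $N \to \infty$ then $M \to \infty$ produces
$$
\limsup_{N\to\infty} \frac{1}{N}\log\det\bm{T_N\varphi} \leq L_M + c^{-1}\|r_M\|_1 \;\xrightarrow[M\to\infty]{}\; L_\infty.
$$
Combining the two inequalities gives $\lim_N \frac{1}{N}\log\det\bm{T_N\varphi} = L_\infty$, and exponentiating yields the desired finite limit for $(\det\bm{T_N\varphi})^{1/N}$.

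The main obstacle is precisely this upper bound: Theorem \ref{second_non_folner} cannot be invoked directly for $\varphi$ because $\log$ need not lie in $C[\mathrm{ess\text{-}inf}~\varphi,\mathrm{ess\text{-}sup}~\varphi]$ when $\varphi \notin L^\infty$, and we have no a priori control on the top of the spectrum of $\bm{T_N\varphi}$. The determinant inequality circumvents this by transferring the discrepancy between $\varphi$ and $\varphi_M$ into the scalar error $\|r_M\|_1$, which vanishes; the role of the hypothesis $\mathrm{ess\text{-}inf}~\varphi > 0$ is both to make $L_M$ a finite quantity and to furnish the uniform invertibility $\bm{T_N\varphi_M}^{-1} \leq c^{-1}I_N$ needed to estimate the trace.
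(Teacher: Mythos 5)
Your proof is correct and follows essentially the same route as the paper: truncate $\varphi_M = \min(\varphi, M)$, invoke Theorem \ref{second_non_folner} on the bounded symbol with $f = \log$, and control the gap between $\frac{1}{N}\mathrm{Tr}\log\bm{T_N\varphi}$ and $\frac{1}{N}\mathrm{Tr}\log\bm{T_N\varphi_M}$ by a quantity of order $c^{-1}\|\varphi - \varphi_M\|_1$ uniformly in $N$. The paper packages this as a general lemma (Theorem \ref{convergence_L_one}) for hypo-F{\o}lner sequences and then reduces Theorem \ref{first_non_folner} to a one-line application of it; you inline the same argument. The only substantive difference is the engine for the error bound: the paper orders the eigenvalues of $\bm{T_N\varphi_M} \leq \bm{T_N\varphi}$ via Weyl's inequality and uses that $\log$ is $1/c$-Lipschitz above $c$, whereas you use the determinant inequality $\det(A+B) \leq \det(A)\exp(\mathrm{Tr}(A^{-1}B))$ with $A^{-1} \leq c^{-1}I$; both yield the identical estimate $0 \leq \frac{1}{N}\log\det\bm{T_N\varphi} - \frac{1}{N}\log\det\bm{T_N\varphi_M} \leq c^{-1}\|\varphi-\varphi_M\|_1$. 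Your treatment of the convergence of $L_M$ by monotonicity plus the upper bound from Theorem \ref{bdd_below_thm} is likewise a cosmetic variant of the paper's Cauchy argument via $0 \leq \log\Delta(\varphi_m) - \log\Delta(\varphi_n) \leq \|\varphi_m - \varphi_n\|_1$.
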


This paper is organized as follows. Section \ref{sec_two} gives the proof of Theorem \ref{bdd_below_thm} and contains some preliminary results, which are essential for the proof of Theorem \ref{third_T_d} and Theorem \ref{first_non_folner}. Theorem \ref{first_T_d} is established in Section \ref{sec_three}. We consider the most concerned non-F{\o}lner cases in Section \ref{sec_four}, and prove Theorem \ref{second_non_folner}
 and Theorem \ref{first_non_folner} in this section. We also apply the above results to  Gram determinants of dilation systems in Section  \ref{sec_five}.

\section{Proof of Theorem \ref{bdd_below_thm} and some preliminary results}\label{sec_two}

We begin with the proof of Theorem \ref{bdd_below_thm}.

\begin{proof}[Proof of Theorem \ref{bdd_below_thm}]
    Let $\{\lambda_j\}_{j=1}^{|\si|}$ be eigenvalues of $T_\si\varphi$ by counting multiplicities. Then
    $$
        \left(\det T_\sigma \varphi\right)^{\frac{1}{|\sigma|}}=\left(\prod_{j=1}^{|\si|}\lambda_j\right)^{\frac{1}{|\si|}}\leq \frac{1}{|\si|}\sum_{j=1}^{|\si|}\lambda_j
        =\frac{1}{|\si|}\mathrm{Tr}T_\si\varphi=\|\varphi\|_1.
    $$
    It remains to prove the bounded-below part, that is,
    \begin{equation}\label{bdd_below_equ} \exp\left(\int_{\mathbb{T}^d}\log\varphi~dm_d\right)\leq \left(\det T_\sigma \varphi\right)^{\frac{1}{|\sigma|}}.
    \end{equation}

   \noindent Without loss of generality, assume that $d=+\infty$. We first prove that \eqref{bdd_below_equ} holds for each $\varphi\in L^\infty_+(\mathbb{T}^\infty)$.
    Let $\mathbb{Z}^0=\{(0,0,\dots)\}\subset \mathbb{Z}^\infty$ and
     $$
     \mathbb{Z}^n=\{\kappa=(\kappa(1),\dots,\kappa(n),0,0, \dots):~\kappa(j)\in\mathbb{Z},~1\leq j\leq n\}\subset\mathbb{Z}^\infty.
     $$
     Define $n(\si)=\min\{n:\si\subset\mathbb{Z}^n\}$. Since $\si$ is a finite subset, $n(\sigma)$ is well-defined. We will give the proof by induction on $n(\si)$.

     When $n(\si)=0$, $\si=\{(0,0,\dots)\}$ and \eqref{bdd_below_equ} follows from Jensen's inequality.
     Assume that \eqref{bdd_below_equ} holds for any $\varphi\in L^\infty_+(\mathbb{T}^\infty)$ and $\si$ such that $n(\si)\leq m-1$.
     Given $\varphi\in L^\infty_+(\mathbb{T}^\infty)$ and $\si$ such that $n(\si)=m$, write $\si_j=\{\kappa\in\si:\kappa(m)=j\}$. Then,
     $$
     \si=\bigsqcup_{j=-\infty}^{+\infty}\si_j.
     $$
     We may further assume that $\log\varphi\in L^1(\mathbb{T}^\infty)$ and define
     $$
     \varphi_r(\bm{\theta})=\exp\left(\frac{1}{2}\int_{0}^{2\pi}\log\varphi(\bm{\theta}_t)\frac{1+r e^{i(\theta_m-t)}}{1-r e^{i(\theta_m-t)}}~\frac{dt}{2\pi}\right),~~~~0<r<1,
     $$
     where $\bm{\theta}_t=(\theta_1,\dots,\theta_{m-1},t,\theta_{m+1},\dots)$.
     Therefore,
     $$
     \left|\varphi_r(\bm{\theta})\right|^2=\exp\left(\int_{0}^{2\pi}\log\varphi(\bm{\theta}_t)\frac{1-r^2}{1-2r \cos (\theta_m-t)+r^2}~\frac{dt}{2\pi}\right).
     $$
     By Fatou's theorem (see \cite[pp. 34]{hoffman}), $|\varphi_r|^2\rightarrow \varphi$ a.e.  on $\ti$ as $r\rightarrow 1$.
     Note that for any $r$, $\|\varphi_r^2\|_\infty\leq\|\varphi\|_\infty$, and hence $|\varphi_r|^2\rightarrow \varphi$ in $L^1(\mathbb{T}^\infty)$ as $r\rightarrow 1$.
     Then
     \begin{equation}\label{det_var_r}
         \left(\det T_\si\varphi\right)^{\frac{1}{|\si|}}=\lim_{r\rightarrow 1}\left(\det T_\si\left|\varphi_r\right|^2\right)^{\frac{1}{|\si|}}.
     \end{equation}
     Let $E_\si=\mathrm{span}\{\bm{z}^\kappa=z_1^{\kappa(1)}z_2^{\kappa(2)}\cdots:\kappa\in\si\}\subset L^2(\mathbb{T}^\infty)$ and let $P_\si$ be the orthogonal projection onto $E_\si$. We have
     $$
     T_\si \left|\varphi_r\right|^2=P_\si M(\varphi_r)^*
     M(\varphi_r)|_{E_\si}\geq P_\si M(\varphi_r)^*P_\si P_\si M(\varphi_r)|_{E_\si}=\left(T_\si\varphi_r\right)^* \left(T_\si\varphi_r\right).
     $$
     This yields that
     \begin{equation}\label{det_inequ}
         \left(\det T_\si\left|\varphi_r\right|^2\right)^{\frac{1}{|\si|}}\geq \left|\det T_\si\varphi_r\right|^{\frac{2}{|\si|}}.
     \end{equation}
    For each $\kappa\in \mathbb{Z}^\infty$ with  $\kappa(m)<0$, we derive   $\widehat{\varphi_r}(\kappa)=0$ from the definition of  $\varphi_r$,  and hence we can write $T_\si \varphi$ as a block upper triangular matrix by a unitary isomorphism,
     \begin{equation}
         T_\si\varphi_r\cong
         \left(\begin{matrix}
         \ddots&&&\\
               &T_{\si_j}\varphi_r&\ast&\cdots\\
               &&T_{\si_{j+1}}\varphi_r&\cdots\\
            0  &&&\ddots
         \end{matrix}\right).
     \end{equation}
    Therefore it holds true
     \begin{equation}\label{det_prod}
         \det T_\si\varphi_r=\prod_{\si_j\neq \emptyset}\det T_{\si_j}\varphi_r.
     \end{equation}
     Define
     $$
     \widetilde{\si_j}=\{\kappa=(\kappa(1),\dots,\kappa(m-1),0,\dots): \exists ~\kappa'\in\si_j, ~\text{s.t.} ~\kappa(j)=\kappa'(j),1\leq j\leq m-1\}.
     $$
     Setting
     $$
     \widetilde{\varphi}(\bm{\theta})=\exp\left(\frac{1}{2}\int_{0}^{2\pi}\log\varphi(\bm{\theta}_t)~\frac{dt}{2\pi}\right),
     $$
     then $\widetilde{\varphi}\in L^\infty_+(\mathbb{T}^\infty)$, and $T_{\si_j}\varphi_r=T_{\widetilde{\si_j}}\widetilde{\varphi}$ since $\widehat{\varphi_r}(\kappa)=\widehat{\widetilde{\varphi}}(\kappa)$ for any $\kappa$ with $\kappa(m)=0$.
     Therefore, we have
     \begin{equation}\label{det_prod_tilde}
         \det T_\si\varphi_r=\prod_{\si_j\neq \emptyset}\det T_{\widetilde{\si_j}}\widetilde{\varphi}.
     \end{equation}
     Since for each $\widetilde{\si_j}$, $n(\widetilde{\si_j})\leq m-1$. We apply the previous assumption to each pair $(\widetilde{\si_j},\widetilde{\varphi})$. Then,
     \begin{align*}
         \det T_\si\varphi_r&\geq \exp\left(\left(\sum_j|\widetilde{\si_j}|\right)\int_{\mathbb{T}^\infty}\log\widetilde{\varphi}~dm_\infty\right)\\
         &= \exp\left(\frac{|\si|}{2}\int_{\mathbb{T}^\infty}\log\varphi~dm_\infty\right).
     \end{align*}
     Combined with \eqref{det_var_r} and \eqref{det_inequ}, \eqref{bdd_below_equ} follows. Thus, \eqref{bdd_below_equ} holds for any finite subset $\si\subset\mathbb{N}$ and $\varphi\in L^\infty_+(\mathbb{T}^\infty)$.

     For $\varphi\in L^1_+(\mathbb{T}^\infty)$, define $\varphi_k=\min\{\varphi,k\}$. Then
     $$
     \exp\left(\int_{\mathbb{T}^\infty}\log\varphi_k~dm_{\infty}\right)\leq\left(\det T_\si\varphi_k\right)^{\frac{1}{|\si|}} .
     $$
     Letting $k\rightarrow+\infty$, we have the theorem.
\end{proof}

The following lemma shows that the existence of the limits in \eqref{second_T_d_limit}  for continuous functions is equivalent to the existence of the limits for polynomials. The proof is standard, which is omitted here.
\begin{lem}\label{general_prop}
Let $\varphi\in\lifr(\mathbb{T}^d)$ and let $\Si=\{\sn\}$ be a sequence of finite subsets of $\mathbb{Z}^d$. Then the following are equivalent.
\begin{enumerate}[(1)]
 \item For any integer $n\geq 0$, $$\lim_{N\rightarrow +\infty} \frac{1}{\nsn} \mathrm{Tr}\bigls\tsn\varphi\bigrs^n\text{~exists}.$$
 \item For any $f\in C[\mathrm{ess\text{-}inf}~\varphi,\mathrm{ess\text{-}sup}~\varphi]$, $$\lim_{N\rightarrow +\infty} \frac{1}{\nsn} \mathrm{Tr} f\bigls\tsn\varphi\bigrs \text{~exists}.$$
\end{enumerate}
\end{lem}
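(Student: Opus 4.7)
The plan is simple: the direction $(2)\Rightarrow(1)$ is immediate by taking $f(x)=x^n$, which belongs to $C[\mathrm{ess\text{-}inf}~\varphi,\mathrm{ess\text{-}sup}~\varphi]$, so only $(1)\Rightarrow(2)$ requires work. For that direction the strategy is to combine spectral containment of the truncations $T_{\sn}\varphi$ with Weierstrass polynomial approximation on a fixed compact interval, and then push the approximation through the normalized trace.

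The first step is to pin down where the spectrum of $T_{\sn}\varphi$ lives. Since $\varphi\in\lifr(\mathbb{T}^d)$, the Laurent matrix $L\varphi$ represents the self-adjoint multiplication operator $M(\varphi)$ on $L^2(\mathbb{T}^d)$, whose spectrum equals the essential range of $\varphi$ and therefore lies in $I:=[\mathrm{ess\text{-}inf}~\varphi,\mathrm{ess\text{-}sup}~\varphi]$. Writing $E_{\sn}=\mathrm{span}\{\bm{z}^\kappa:\kappa\in\sn\}$ and letting $P_{E_{\sn}}$ be the orthogonal projection onto $E_{\sn}$, the matrix $T_{\sn}\varphi$ is simply the compression $P_{E_{\sn}}M(\varphi)|_{E_{\sn}}$. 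The min-max characterization of eigenvalues then places $\mathrm{spec}(T_{\sn}\varphi)\subset I$, and in particular makes every $T_{\sn}\varphi$ Hermitian.

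Given $f\in C(I)$ and $\varepsilon>0$, I would then invoke the Weierstrass approximation theorem to choose a polynomial $p$ (depending on $f$ and $\varepsilon$, but \emph{independent of} $N$) with $\sup_{x\in I}|f(x)-p(x)|<\varepsilon$. The spectral theorem for Hermitian matrices together with the inclusion $\mathrm{spec}(T_{\sn}\varphi)\subset I$ yields $\|f(T_{\sn}\varphi)-p(T_{\sn}\varphi)\|_{\mathrm{op}}\leq\varepsilon$, and hence
$$
\left|\frac{1}{\nsn}\mathrm{Tr}\,f(T_{\sn}\varphi)-\frac{1}{\nsn}\mathrm{Tr}\,p(T_{\sn}\varphi)\right|\leq\varepsilon
\quad\text{uniformly in } N.
$$
By linearity of trace, hypothesis (1) ensures that $\frac{1}{\nsn}\mathrm{Tr}\,p(T_{\sn}\varphi)$ converges as $N\to\infty$ for every polynomial $p$. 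A standard $\varepsilon/3$-argument now shows that $\bigl\{\frac{1}{\nsn}\mathrm{Tr}\,f(T_{\sn}\varphi)\bigr\}_N$ is Cauchy, which gives (2).

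I do not expect any substantive obstacle here—this is precisely why the paper records the result as standard and omits the proof. The only point that needs a moment of care is choosing the approximating polynomial once and for all (depending only on $f$ and $\varepsilon$), so that the intermediate normalized-trace sequence on the right-hand side of the displayed inequality is genuinely covered by hypothesis (1) before one takes $N\to\infty$.
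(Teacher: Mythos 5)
Your proof is correct and is precisely the standard argument the paper alludes to when it says ``the proof is standard, which is omitted here.'' The key points are all in place: $T_{\sn}\varphi=P_{E_{\sn}}M(\varphi)|_{E_{\sn}}$ is a Hermitian compression so its spectrum lies in $I=[\mathrm{ess\text{-}inf}\,\varphi,\mathrm{ess\text{-}sup}\,\varphi]$ (the numerical range of $M(\varphi)$ is contained in the convex hull of its spectrum, which is $I$), Weierstrass gives a single polynomial $p$ with $\|f-p\|_{C(I)}<\varepsilon$ independent of $N$, the spectral mapping bound yields $|\frac{1}{\nsn}\mathrm{Tr}\,f(T_{\sn}\varphi)-\frac{1}{\nsn}\mathrm{Tr}\,p(T_{\sn}\varphi)|\le\varepsilon$ uniformly in $N$, and linearity of the trace together with hypothesis (1) handles the polynomial case, so the $\varepsilon/3$ argument closes the loop.
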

\noindent
In what follows we discuss $L^1_+(\mathbb{T}^d)$ symbols with strictly positive essential infimum.
A sequence of finite subsets $\Si=\{\sn\}$ is called a \textit{hypo-F{\o}lner sequence} if for any $\varphi\in\lifr(\mathbb{T}^d)$, $(\varphi,\Si)$ satisfies either (1) or (2) of Lemma \ref{general_prop}. Every F{\o}lner sequence is a hypo-F{\o}lner sequence. As shown in \cite{nikolski2021szego,2021SZEGO}, there exists a lot of non-F{\o}lner sequences that are hypo-F{\o}lner sequences. If $\Si$ is a hypo-F{\o}lner sequence, by Lemma \ref{general_prop},
for any $\varphi\in L_+^\infty(\mathbb{T}^d)$ with $\text{ess-inf}~\varphi >0$,
the limit
\begin{equation}\label{PF_log_linf}
\Delta_\Sigma(\varphi)=\lim_{N\rightarrow +\infty}\bigls\det\tsn\varphi\bigrs^{\frac{1}{\nsn}}~~~\text{exists}.\end{equation}
We will simply use $\Delta(\varphi)$ when no ambiguity arises. The following theorem is a generalization of the first version of the first Szeg\H{o} limit theorem on multi-dimensional torus $\mathbb{T}^d$, $1\leq d\leq+\infty$.
\begin{thm}\label{convergence_L_one}
    Let $\varphi\in L^1_+(\mathbb{T}^d)$ with $\mathrm{ess\text{-}inf}~\varphi>0$, and let $\Si=\{\sn\}$ be a hypo-F{\o}lner sequence.
    Then
    $$
\lim_{N\rightarrow +\infty}\bigls\det\tsn\varphi\bigrs^{\frac{1}{\nsn}}~~~\text{exists}.
$$
Further, if we define $\varphi_n(x):=\min\{\varphi(x),n\}$ for $1\leq n<+\infty$,
then
\begin{equation}\label{l1_lim}
\lim_{N\rightarrow +\infty}\bigls\det\tsn\varphi\bigrs^{\frac{1}{\nsn}}=\lim_{n\rightarrow +\infty}\Delta(\varphi_n).
\end{equation}
\end{thm}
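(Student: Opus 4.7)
The plan is to sandwich $\bigls\det\tsn\varphi\bigrs^{1/\nsn}$ between quantities governed by the bounded truncations $\varphi_n = \min\{\varphi, n\}$, for which the hypo-F{\o}lner hypothesis already guarantees convergence. The two main ingredients will be the monotonicity of the determinant on positive semidefinite matrices, and the classical perturbative bound $\det(A+B)\leq \det(A)\exp(\mathrm{Tr}(A^{-1}B))$.

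First, once $n \geq \text{ess-inf}\,\varphi$ we have $\varphi_n \in L^\infty_+(\mathbb{T}^d)$ with $\text{ess-inf}\,\varphi_n = \text{ess-inf}\,\varphi > 0$, so \eqref{PF_log_linf} gives
$$\Delta(\varphi_n) = \lim_{N\to\infty}\bigls\det\tsn\varphi_n\bigrs^{1/\nsn}.$$
Since $\varphi_n \nearrow \varphi$ pointwise, for any $f$ in the span $E_{\sn}$ of $\{\bm{z}^\kappa:\kappa\in\sn\}$ the quadratic form $\langle\tsn\varphi_n f,f\rangle = \int\varphi_n|f|^2\,dm_d$ is non-decreasing in $n$ and bounded above by $\int\varphi|f|^2\,dm_d = \langle\tsn\varphi f,f\rangle$; thus $\tsn\varphi_n \leq \tsn\varphi_{n+1} \leq \tsn\varphi$ in the positive semidefinite order. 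Monotonicity of the determinant on PSD matrices then shows $\det\tsn\varphi_n$ is non-decreasing in $n$, and Theorem \ref{bdd_below_thm} bounds $\Delta(\varphi_n)$ above by $\|\varphi\|_1 < \infty$. Consequently $\lim_{n\to\infty}\Delta(\varphi_n)$ exists and is finite.

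For the $\liminf$ direction, the same order relation $\tsn\varphi \geq \tsn\varphi_n$ yields $\det\tsn\varphi \geq \det\tsn\varphi_n$, whence $\liminf_N\bigls\det\tsn\varphi\bigrs^{1/\nsn} \geq \Delta(\varphi_n)$ for every $n$, and sending $n\to\infty$ produces $\liminf_N \geq \lim_n\Delta(\varphi_n)$.

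The main obstacle is the matching $\limsup$ estimate. Here I would invoke the inequality
$$\det(A+B) \leq \det(A)\exp\bigl(\mathrm{Tr}(A^{-1}B)\bigr)$$
for $A$ positive definite and $B$ positive semidefinite, which follows from $\log(1+x)\leq x$ applied to the eigenvalues of the PSD matrix $A^{-1/2}BA^{-1/2}$. Applying this with $A = \tsn\varphi_n$ and $B = \tsn(\varphi-\varphi_n)$, and using that $A \geq (\text{ess-inf}\,\varphi)\,I$ while $\mathrm{Tr}\,B = \nsn\|\varphi-\varphi_n\|_1$, one obtains
$$\frac{1}{\nsn}\log\det\tsn\varphi \;\leq\; \frac{1}{\nsn}\log\det\tsn\varphi_n + \frac{\|\varphi-\varphi_n\|_1}{\text{ess-inf}\,\varphi}.$$
Taking $\limsup_N$ yields $\limsup_N\bigls\det\tsn\varphi\bigrs^{1/\nsn} \leq \Delta(\varphi_n)\exp(\|\varphi-\varphi_n\|_1/\text{ess-inf}\,\varphi)$, and letting $n\to\infty$ so that $\|\varphi-\varphi_n\|_1\to 0$ by dominated convergence gives $\limsup_N \leq \lim_n\Delta(\varphi_n)$. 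Combined with the lower bound this proves both the existence of the limit and the identity \eqref{l1_lim}. The hypothesis $\text{ess-inf}\,\varphi > 0$ enters precisely to control $\|A^{-1}\| \leq (\text{ess-inf}\,\varphi)^{-1}$, without which the error term would not vanish in the limit.
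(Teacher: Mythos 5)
Your proof is correct and follows essentially the same strategy as the paper's: both truncate to $\varphi_n = \min\{\varphi,n\}$, sandwich $\frac{1}{\nsn}\mathrm{Tr}\log\tsn\varphi$ between $\frac{1}{\nsn}\mathrm{Tr}\log\tsn\varphi_n$ and $\frac{1}{\nsn}\mathrm{Tr}\log\tsn\varphi_n + C\|\varphi-\varphi_n\|_1$ uniformly in $N$, and then let $n\to\infty$. The only difference is cosmetic: the paper normalizes $\varphi\geq 1$ and obtains the upper estimate by comparing ordered eigenvalues via Weyl monotonicity and the mean value theorem, whereas you keep $\delta=\mathrm{ess\text{-}inf}\,\varphi$ explicit and invoke $\det(A+B)\leq\det(A)\exp\bigl(\mathrm{Tr}(A^{-1}B)\bigr)$, which yields the same type of error term with constant $C=1/\delta$.
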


\begin{proof}
    Without loss of generality, we assume that $\varphi\geq 1$.
    Denote by
    $$1 \leq \lambda_1  \leq \cdots \leq \lambda_{\nsn}$$ the eigenvalues of $\tsn\varphi$ by considering multiplicities, and
    $$1 \leq \lambda_1^{(n)}  \leq \cdots \leq \lambda_{\nsn}^{(n)}$$
    the eigenvalues of $\tsn\varphi_n$ by considering multiplicities. Since $\varphi_n\leq \varphi$, we have $\tsn\varphi_n \leq \tsn\varphi$. By \cite[pp. 319]{lax2002functional}, it follows that $$\lambda_i^{(n)}\leq\lambda_i,~~~i=1,  \ldots , \nsn.$$
    Then
    $$\begin{aligned}
    &\frac{1}{\nsn}\text{Tr}\log\tsn\varphi- \frac{1}{\nsn}\text{Tr}\log\tsn\varphi_n\\
    =&\frac{1}{\nsn}  \sum_{i=1}^{\nsn} \left(\log \lambda_i - \log  \lambda_i^{(n)} \right)\\
    \leq& \frac{1}{\nsn}  \sum_{i=1}^{\nsn}  \left(\lambda_i - \lambda_i^{(n)} \right)\\
    = &\frac{1}{\nsn}\text{Tr} \tsn\varphi- \frac{1}{\nsn}\text{Tr}\tsn\varphi_n\\
    = &\|\varphi-\varphi_n\|_1.
    \end{aligned}$$
    It gives
    \begin{align}
        \begin{aligned}\label{control}
        \frac{1}{\nsn}\text{Tr}\log\tsn\varphi_n&\leq \frac{1}{\nsn}\text{Tr}\log\tsn\varphi\\
        &\leq \|\varphi-\varphi_n\|_1+\frac{1}{\nsn}\text{Tr}\log\tsn\varphi_n.
    \end{aligned}
    \end{align}
    Similarly, for $n\leq m$ we have
    $$
    0\leq \frac{1}{\nsn}\text{Tr}\log\tsn\varphi_m- \frac{1}{\nsn}\text{Tr}\log\tsn\varphi_n\leq \|\varphi_m-\varphi_n\|_1.
    $$
    This implies that $\{\log\Delta(\varphi_n)\}_{n=1}^{\infty}$ is an increasing convergence sequence.
Combined with \eqref{control}, it holds that
$$
\lim_{N\rightarrow\infty}\frac{1}{\nsn}\text{Tr}\log\tsn\varphi=\lim_{n\rightarrow\infty}\log\Delta(\varphi_n),
$$
and hence \eqref{l1_lim} follows.
\end{proof}

\begin{cor}\label{Folner_thm_first}
    Let $\varphi\in L^1_+(\mathbb{T}^d)$ with $\mathrm{ess\text{-}inf}~\varphi>0$, and let $\Si=\{\sn\}$ be a F{\o}lner sequence. Then
    $$
    \lim_{N\rightarrow +\infty}\bigls\det\tsn\varphi\bigrs^{\frac{1}{\nsn}}=\exp\bigls\int_{\mathbb{T}^d}\log\varphi~dm_d\bigrs.
    $$
\end{cor}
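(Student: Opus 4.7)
The strategy is to combine Theorem \ref{convergence_L_one}, which reduces the $L^1_+$ case (with strictly positive essential infimum) to a limit over bounded truncations, with the second-version Szeg\H{o} theorem on $\mathbb{T}^d$ (Theorem \ref{second_T_d}), which evaluates each $\Delta(\varphi_n)$ explicitly. The hypothesis $\mathrm{ess\text{-}inf}~\varphi > 0$ is precisely what ensures that $\log$ is continuous on the range of each truncation and that $\log\varphi$ is integrable.

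Concretely, set $\epsilon = \mathrm{ess\text{-}inf}~\varphi > 0$ and $\varphi_n = \min\{\varphi, n\}$, so that $\epsilon \leq \varphi_n \leq n$ and $\varphi_n \in \lifr(\mathbb{T}^d)$. Since $f(x) = \log x$ is continuous on $[\epsilon, n]$, which contains $[\mathrm{ess\text{-}inf}~\varphi_n, \mathrm{ess\text{-}sup}~\varphi_n]$, applying Theorem \ref{second_T_d} to each $\varphi_n$ with this choice of $f$ would yield
$$
\Delta(\varphi_n) = \lim_{N\to\infty}\bigl(\det \tsn \varphi_n\bigr)^{\frac{1}{\nsn}} = \exp\left(\int_{\mathbb{T}^d} \log \varphi_n \, dm_d\right).
$$

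Next, since the F{\o}lner sequence $\{\sn\}$ is in particular a hypo-F{\o}lner sequence, Theorem \ref{convergence_L_one} gives
$$
\lim_{N\to\infty}\bigl(\det \tsn \varphi\bigr)^{\frac{1}{\nsn}} = \lim_{n\to\infty}\Delta(\varphi_n) = \lim_{n\to\infty}\exp\left(\int_{\mathbb{T}^d} \log \varphi_n \, dm_d\right).
$$
To finish, I would interchange the limit and integral. Since $\log\varphi_n \uparrow \log\varphi$ pointwise, and $\log\varphi \in L^1(\mathbb{T}^d)$ — its positive part is bounded by $\varphi \in L^1$ and its negative part by $|\log\epsilon|$ — monotone convergence yields $\int_{\mathbb{T}^d} \log\varphi_n \, dm_d \to \int_{\mathbb{T}^d} \log\varphi \, dm_d$, and continuity of $\exp$ closes the argument.

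Given the machinery already in place, I do not anticipate any serious obstacle: the proof is essentially a matter of assembling Theorems \ref{convergence_L_one} and \ref{second_T_d} correctly. The only points requiring attention are verifying that the continuity interval for $f = \log$ covers the essential range of each $\varphi_n$ (which is exactly where $\mathrm{ess\text{-}inf}~\varphi > 0$ is used in an essential way), and justifying the passage of the limit through the integral; both are routine.
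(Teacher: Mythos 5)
Your proposal is correct and follows essentially the same route as the paper: define $\varphi_n = \min\{\varphi, n\}$, evaluate $\Delta(\varphi_n)$ via the second-version Szeg\H{o} theorem (Theorem \ref{second_T_d} with $f = \log$), invoke Theorem \ref{convergence_L_one} to identify $\lim_N (\det T_{\sigma_N}\varphi)^{1/|\sigma_N|}$ with $\lim_n \Delta(\varphi_n)$, and pass to the limit by monotone convergence (which the paper cites as Levy's lemma). The only difference is that you spell out the application of Theorem \ref{second_T_d} and the integrability of $\log\varphi$ explicitly, whereas the paper leaves these implicit.
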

\begin{proof}
Let $\varphi_n(x)=\min\{\varphi(x),n\}$ for $1\leq n<+\infty$. Applying Levy's lemma, one has that
$$
\lim_{n\rightarrow +\infty}\Delta(\varphi_n)=\lim_{n\rightarrow +\infty}\exp\bigls\int_{\mathbb{T}^d}\log\varphi_n~dm_d\bigrs
=\exp\bigls\int_{\mathbb{T}^d}\log\varphi~dm_d\bigrs.
$$
Thus, the conclusion follows from Theorem \ref{convergence_L_one}.
\end{proof}

\begin{proof}[Proof of Theorem \ref{third_T_d}]
    By Corollary \ref{Folner_thm_first}, \eqref{third_T_d_limit} holds for   $f^{(c)}(x)=\log(x+c)$, $c>0$. The rest proof of Theorem \ref{third_T_d} is nearly the same as the classical case $d=1$ (see \cite[Theorem 2.7.13]{simon2005orthogonal}), which we omit here.
\end{proof}

\section{Proof of Theorem \ref{first_T_d}}\label{sec_three}

This section is devoted to the proof of Theorem \ref{first_T_d}.  To this end, we need several auxiliary lemmas.
\begin{lem}\label{one_over_f}
Let $1\leq d\leq+\infty$, $\varphi\in L^1_+(\mathbb{T}^d)$ with $\mathrm{ess\text{-}inf}~\varphi=\delta >0$, and let $\Sigma=\{\sigma_N\}$ be a F{\o}lner sequence. Then
\begin{equation}\label{lem_equ}
    \lim_{N\rightarrow +\infty}\frac{1}{\nsn}\mathrm{Tr}\bigls\tsn \varphi\bigrs^{-2}=\int_{\mathbb{T}^d}\frac{1}{\varphi^2}~dm_d.
\end{equation}
\end{lem}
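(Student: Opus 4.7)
The plan is to reduce from $L^1_+$ (with essential infimum $\delta>0$) to $L^\infty_\mathbb{R}$ by truncation, apply Theorem \ref{second_T_d} on the truncated level, and then control the trace-error using a resolvent identity. Throughout, set $A_N=T_{\sigma_N}\varphi$ and $B_N^{(n)}=T_{\sigma_N}\varphi_n$, where $\varphi_n:=\min\{\varphi,n\}$ for $n\geq\delta$. Since $\varphi\geq\delta$ and $\varphi_n\geq\delta$, both $A_N$ and $B_N^{(n)}$ are bounded below by $\delta$ on $E_{\sigma_N}$, hence invertible with operator-norm bound $\delta^{-1}$.

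First, fix $n$ and apply Theorem \ref{second_T_d} to $\varphi_n\in L^\infty_\mathbb{R}(\mathbb{T}^d)$ with the continuous function $f(x)=1/x^2$ on $[\delta,n]$. This yields
$$
\lim_{N\to\infty}\frac{1}{|\sigma_N|}\mathrm{Tr}\bigl(B_N^{(n)}\bigr)^{-2}=\int_{\mathbb{T}^d}\frac{1}{\varphi_n^2}\,dm_d.
$$
Since $\varphi_n\nearrow\varphi$ pointwise with $1/\varphi_n^2\leq 1/\delta^2$, dominated convergence gives $\int 1/\varphi_n^2\,dm_d\to\int 1/\varphi^2\,dm_d$ as $n\to\infty$.

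The main step is to compare the two squared inverse traces. Write $\Delta_N=B_N^{(n)}{}^{-1}-A_N^{-1}$. Using the resolvent identity and $A_N-B_N^{(n)}=T_{\sigma_N}(\varphi-\varphi_n)\geq 0$, one has
$$
\Delta_N=B_N^{(n)}{}^{-1}\bigl(A_N-B_N^{(n)}\bigr)A_N^{-1}\geq 0.
$$
Because $T_{\sigma_N}(\varphi-\varphi_n)\geq 0$ is trace-class with trace $|\sigma_N|\,\|\varphi-\varphi_n\|_1$, Hölder's trace inequality together with cyclicity gives
$$
\mathrm{Tr}\,\Delta_N=\mathrm{Tr}\bigl(A_N^{-1}B_N^{(n)}{}^{-1}T_{\sigma_N}(\varphi-\varphi_n)\bigr)\leq\frac{1}{\delta^2}\,\mathrm{Tr}\,T_{\sigma_N}(\varphi-\varphi_n)=\frac{|\sigma_N|}{\delta^2}\,\|\varphi-\varphi_n\|_1.
$$
Then
$$
0\leq\mathrm{Tr}\bigl[(B_N^{(n)}{}^{-1})^2-(A_N^{-1})^2\bigr]=\mathrm{Tr}\bigl(\Delta_N(B_N^{(n)}{}^{-1}+A_N^{-1})\bigr)\leq\frac{2}{\delta}\mathrm{Tr}\,\Delta_N\leq\frac{2|\sigma_N|}{\delta^3}\|\varphi-\varphi_n\|_1,
$$
where the first inequality uses that $(B_N^{(n)})^{-1}\geq A_N^{-1}$ (from $\varphi_n\leq\varphi$) so that the squared-trace ordering holds as well.

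Finally, dividing by $|\sigma_N|$ and letting $N\to\infty$, one obtains for every admissible $n$
$$
\int_{\mathbb{T}^d}\frac{1}{\varphi_n^2}\,dm_d-\frac{2}{\delta^3}\|\varphi-\varphi_n\|_1\leq\liminf_{N\to\infty}\frac{1}{|\sigma_N|}\mathrm{Tr}\,A_N^{-2}\leq\limsup_{N\to\infty}\frac{1}{|\sigma_N|}\mathrm{Tr}\,A_N^{-2}\leq\int_{\mathbb{T}^d}\frac{1}{\varphi_n^2}\,dm_d.
$$
Letting $n\to\infty$ and using $\|\varphi-\varphi_n\|_1\to 0$ together with the dominated-convergence step yields \eqref{lem_equ}.

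The main obstacle is the trace comparison in the third step, since $A_N^{-1}$ and $B_N^{(n)}{}^{-1}$ do not commute and one must juggle positivity, cyclicity and Hölder's trace inequality to obtain a clean bound linear in $\|\varphi-\varphi_n\|_1$; the rest of the argument is a routine truncation passage.
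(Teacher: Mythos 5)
Your proof is correct, but it takes a genuinely different route from the paper's. The paper's proof is a two-line reduction: it sets $\psi=\varphi-\delta\in L^1_+(\mathbb{T}^d)$ and $f(x)=(x+\delta)^{-2}$ (which is continuous on $[0,\infty)$ with $f(x)/x\to 0$), then applies Theorem \ref{third_T_d} to $(\psi,f)$ and observes that $f(T_{\sigma_N}\psi)=(T_{\sigma_N}\psi+\delta I)^{-2}=(T_{\sigma_N}\varphi)^{-2}$ because $\delta I$ commutes with everything. You instead avoid Theorem \ref{third_T_d} entirely: you truncate $\varphi_n=\min\{\varphi,n\}$, apply the bounded-symbol Theorem \ref{second_T_d} with $f(x)=1/x^2$ at each level, and then control $\frac{1}{|\sigma_N|}\bigl|\mathrm{Tr}(T_{\sigma_N}\varphi_n)^{-2}-\mathrm{Tr}(T_{\sigma_N}\varphi)^{-2}\bigr|$ by a resolvent identity, operator monotonicity of the inverse, and H\"older's trace inequality, obtaining a bound linear in $\|\varphi-\varphi_n\|_1$ uniformly in $N$. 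Every step checks out: $B_N^{(n)}\leq A_N$ gives $\Delta_N=B_N^{(n)-1}-A_N^{-1}\geq 0$ (operator monotonicity, not just the displayed resolvent formula), the trace identity $\mathrm{Tr}(X^2-Y^2)=\mathrm{Tr}((X-Y)(X+Y))$ holds by cyclicity, and $\mathrm{Tr}(PQ)\geq 0$ for $P,Q\geq 0$ gives the sign. Your argument is more elementary and self-contained, essentially re-proving the special case of Theorem \ref{third_T_d} needed here; the paper's is shorter because it exploits that Theorem \ref{third_T_d} has already been established in Section \ref{sec_two}, and its shift trick $\varphi=\psi+\delta$ converts the unboundedness issue into a translation of the functional calculus rather than a limiting argument.
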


\begin{proof}
Let $\psi=\varphi-\delta$ and $f(x)=(x+\delta)^{-2}$. By Theorem \ref{third_T_d}, we have
\begin{equation}
    \lim_{N\rightarrow +\infty}\frac{1}{\nsn}\mathrm{Tr}f(T_{\sn}\psi)=\int_{\mathbb{T}^d}f(\psi)~dm_d.
\end{equation}
Hence, \eqref{lem_equ} holds.
\end{proof}

The following lemma comes from \cite[Lemma 7.18]{hagen2000c}.
\begin{lem} \label{prod_lmm}
Let $\varphi_1,\dots,\varphi_n\in\lifr(\mathbb{T}^d)$ and    let $\Sigma=\{\sigma_N\}$ be a F{\o}lner sequence.
Then
$$
\lim_{N\rightarrow+\infty}\frac{1}{\nsn}\mathrm{Tr}\left(\prod_{i=1}^n\tsn \varphi_i \right)=\int_{\mathbb{T}^d} \left(\prod_{i=1}^n \varphi_i \right) ~dm_d.
$$
\end{lem}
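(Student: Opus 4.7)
The plan is to prove the lemma by a direct Fourier expansion of the trace, handled first for trigonometric polynomials where the F{\o}lner condition applies cleanly, and then extended to arbitrary $L^\infty_{\mathbb{R}}$ symbols by an $L^2$-density argument controlled by a Hilbert--Schmidt estimate.

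First I would unfold the trace. For any $\xi,\eta\in\mathbb{Z}^d$, the matrix entry of $\tsn\varphi_i$ indexed by $(\xi,\eta)\in\sn\times\sn$ is $\widehat{\varphi_i}(\eta-\xi)$, so
\begin{equation*}
\mathrm{Tr}\Bigl(\prod_{i=1}^n\tsn\varphi_i\Bigr)
=\sum_{\xi_1,\dots,\xi_n\in\sn}\widehat{\varphi_1}(\xi_2-\xi_1)\widehat{\varphi_2}(\xi_3-\xi_2)\cdots\widehat{\varphi_n}(\xi_1-\xi_n).
\end{equation*}
Changing variables to $\eta_j=\xi_{j+1}-\xi_j$ (indices mod $n$) subject to $\eta_1+\cdots+\eta_n=0$, one obtains
\begin{equation*}
\frac{1}{\nsn}\mathrm{Tr}\Bigl(\prod_{i=1}^n\tsn\varphi_i\Bigr)
=\sum_{\eta_1+\cdots+\eta_n=0}\widehat{\varphi_1}(\eta_1)\cdots\widehat{\varphi_n}(\eta_n)\cdot\frac{\bigl|\sn\cap\bigcap_{k=1}^{n-1}(\sn-s_k)\bigr|}{\nsn},
\end{equation*}
where $s_k=\eta_1+\cdots+\eta_k$. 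For each fixed tuple $(\eta_1,\dots,\eta_n)$, the F{\o}lner property applied $n-1$ times gives that the above ratio converges to $1$ as $N\to\infty$, while the limiting sum is exactly the Parseval/convolution identity for $\int_{\mathbb{T}^d}\prod_i\varphi_i\,dm_d$.

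When each $\varphi_i$ is a trigonometric polynomial the sum over $(\eta_1,\dots,\eta_n)$ is finite, so dominated convergence is automatic and the conclusion follows. For general $\varphi_i\in\lifr(\mathbb{T}^d)$ I would approximate one factor at a time by trigonometric polynomials $\varphi_i^{(k)}$ in $L^2(\mathbb{T}^d)$, bounded by $\|\varphi_i\|_\infty$ (using e.g.\ Fej{\'e}r-type kernels). The error in replacing $\varphi_1$ by $\varphi_1^{(k)}$ is controlled by
\begin{equation*}
\Bigl|\mathrm{Tr}\bigl(\tsn(\varphi_1-\varphi_1^{(k)})\cdot B_N\bigr)\Bigr|
\leq\|\tsn(\varphi_1-\varphi_1^{(k)})\|_{HS}\,\|B_N\|_{HS},
\end{equation*}
where $B_N=\tsn\varphi_2\cdots\tsn\varphi_n$. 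Using $\|\tsn\psi\|_{HS}^2\leq\nsn\|\psi\|_2^2$ and $\|B_N\|_{HS}^2\leq\nsn\|B_N\|^2\leq\nsn\prod_{j\geq 2}\|\varphi_j\|_\infty^2$, one obtains
\begin{equation*}
\frac{1}{\nsn}\Bigl|\mathrm{Tr}\bigl(\tsn(\varphi_1-\varphi_1^{(k)})\cdot B_N\bigr)\Bigr|\leq\|\varphi_1-\varphi_1^{(k)}\|_2\prod_{j\geq 2}\|\varphi_j\|_\infty,
\end{equation*}
which is independent of $N$ and tends to $0$ as $k\to\infty$. Iterating this substitution on each factor (the remaining factors always have uniformly bounded operator norm) reduces the $L^\infty$ case to the trigonometric-polynomial case.

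The main obstacle is the double limit in $k$ and $N$: one must verify that the Hilbert--Schmidt/operator-norm estimates give a uniform-in-$N$ bound on the approximation error, and in the case $d=+\infty$ that the approximating trigonometric polynomials on $\mathbb{T}^\infty$ can be chosen with controlled $L^\infty$ norm while being dense in $L^2$. Once that uniformity is in hand, the combinatorial identity above together with the F{\o}lner property delivers the result.
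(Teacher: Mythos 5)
The paper does not prove this lemma; it simply cites it as \cite[Lemma~7.18]{hagen2000c}, so there is no in-paper argument to compare against. Your proposal supplies a self-contained, correct proof by direct Fourier expansion of the trace, and it is sound. The unfolding of $\mathrm{Tr}\bigl(\prod_i\tsn\varphi_i\bigr)$ into a sum over $\eta_1+\cdots+\eta_n=0$ weighted by $\bigl|\sn\cap\bigcap_k(\sn-s_k)\bigr|/\nsn$ is exactly the right combinatorial identity, and the F{\o}lner property (applied to $-s_1,\dots,-s_{n-1}$ together with the inclusion--exclusion bound $|\sn\cap\bigcap_k(\sn-s_k)|\ge|\sn|-\sum_k|\sn\setminus(\sn-s_k)|$) does drive each ratio to $1$. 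Passing first through trigonometric polynomials keeps the sum finite so the term-by-term limit is legitimate and equals $\sum_{\eta_1+\cdots+\eta_n=0}\widehat{\varphi_1}(\eta_1)\cdots\widehat{\varphi_n}(\eta_n)=\int\prod_i\varphi_i\,dm_d$. The Hilbert--Schmidt/Cauchy--Schwarz estimate $|\mathrm{Tr}(AB)|\le\|A\|_{HS}\|B\|_{HS}$ together with $\|\tsn\psi\|_{HS}^2\le\nsn\|\psi\|_2^2$ and $\|B_N\|_{HS}\le\sqrt{\nsn}\,\|B_N\|$ correctly yields the uniform-in-$N$ error bound $\|\varphi_i-\varphi_i^{(k)}\|_2\prod_{j\ne i}\|\varphi_j\|_\infty$, and cyclicity of the trace lets you iterate over each factor with the remaining operator norms uniformly controlled.

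The one point you flag as a concern --- controlled-$L^\infty$ trigonometric-polynomial approximations in the case $d=+\infty$ --- is resolvable and is in fact implicitly handled elsewhere in the paper: first project with the conditional expectation $E(\varphi\mid\mathcal{F}_k)$ onto the first $k$ variables (this is an $L^\infty$-contraction and converges to $\varphi$ in $L^2$ by martingale convergence), then apply a finite-dimensional Fej\'er mean on $\mathbb{T}^k$, which again contracts $L^\infty$ and converges in $L^2$. With that sub-lemma in place, your argument is complete. In contrast, the cited Lemma~7.18 of Hagen--Roch--Silbermann sits inside a C\textsuperscript{*}-algebraic spectral-approximation framework (Arveson--B\'edos F{\o}lner filtrations); your route is more elementary and entirely Fourier-analytic, which is a reasonable trade-off --- it avoids importing the C\textsuperscript{*}-machinery at the cost of a somewhat longer hands-on estimate. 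If you want to include this in the paper as an alternative to the citation, spell out the $d=\infty$ approximation step as above.
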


\begin{lem}\label{g_over_f}
Let $\psi\in\lifr(\mathbb{T}^d)$ and $\varphi\in L^1_+(\mathbb{T}^d)$ with $\mathrm{ess\text{-}inf}~\varphi= \delta>0$. Suppose that $\Si=\{\sn\}$ is a F{\o}lner sequence, then for any $n\geq 0$,
\begin{equation}\label{g_over_f_equ}
    \lim_{N\rightarrow+\infty}\frac{1}{\nsn}\mathrm{Tr} \left(\tsn \psi \cdot(\tsn \varphi)^{-1} \right)^n=\int_{\mathbb{T}^d}\left(\frac{\psi}{\varphi}\right)^n~dm_d.
\end{equation}
\end{lem}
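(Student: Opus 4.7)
The plan is to handle this in two stages: first the bounded case $\varphi\in\lifr(\mathbb{T}^d)$ via Weierstrass approximation of $1/x$ on a closed interval, and then reduce the general $\varphi\in L^1_+$ case to the bounded one through a resolvent-type estimate in normalized trace norm. Throughout, I will write $A_N=\tsn\varphi$ and exploit that $\varphi\geq\delta>0$ forces $\tsn\varphi\geq\delta I$ on $E_{\sn}$, so $\|A_N^{-1}\|\leq\delta^{-1}$; the same uniform bound will hold for the truncations $\varphi_k:=\min\{\varphi,k\}$ once $k\geq\delta$. I will also use $\|\tsn\psi\|\leq\|\psi\|_\infty$ throughout.

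For the \emph{bounded case} $\delta\leq\varphi\leq M$, I would fix polynomials $p_j$ with $\|p_j-1/x\|_{C[\delta,M]}\to 0$ by Weierstrass, so that the functional calculus yields $\|p_j(A_N)-A_N^{-1}\|\leq\|p_j-1/x\|_{C[\delta,M]}$ uniformly in $N$. The telescoping expansion
$$(\tsn\psi\,A_N^{-1})^n-(\tsn\psi\,p_j(A_N))^n=\sum_{i=0}^{n-1}(\tsn\psi\,A_N^{-1})^i\,\tsn\psi\,\bigl(A_N^{-1}-p_j(A_N)\bigr)\,(\tsn\psi\,p_j(A_N))^{n-1-i},$$
combined with $|\mathrm{Tr}\,B|\leq\nsn\|B\|$ on matrices of size $\nsn$, would then give
$$\sup_N\frac{1}{\nsn}\bigl|\mathrm{Tr}(\tsn\psi\,A_N^{-1})^n-\mathrm{Tr}(\tsn\psi\,p_j(A_N))^n\bigr|\leq C_n\|p_j-1/x\|_{C[\delta,M]},$$
with $C_n$ depending only on $n,\|\psi\|_\infty,\delta$. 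For each fixed $j$, multilinearly expanding $(\tsn\psi\,p_j(A_N))^n$ into a finite sum of products of $\tsn\psi$ and $\tsn\varphi$ allows me to apply Lemma \ref{prod_lmm} term by term, obtaining
$$\lim_{N\to\infty}\frac{1}{\nsn}\mathrm{Tr}(\tsn\psi\,p_j(A_N))^n=\int_{\mathbb{T}^d}(\psi\,p_j(\varphi))^n\,dm_d.$$
Since $p_j(\varphi)\to 1/\varphi$ uniformly on $\mathbb{T}^d$, the right-hand side tends to $\int(\psi/\varphi)^n\,dm_d$, and a diagonal argument concludes this case.

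For the \emph{reduction from $L^1_+$}, I would set $\varphi_k=\min\{\varphi,k\}\in\lifr(\mathbb{T}^d)$ for $k\geq\delta$, so that $\varphi_k\geq\delta$ and $\varphi_k\to\varphi$ in $L^1$. The resolvent identity
$$A_N^{-1}-(\tsn\varphi_k)^{-1}=(\tsn\varphi_k)^{-1}\bigl(\tsn\varphi_k-A_N\bigr)A_N^{-1},$$
paired with the positivity-based identity $\|\tsn(\varphi-\varphi_k)\|_1=\mathrm{Tr}\,\tsn(\varphi-\varphi_k)=\nsn\|\varphi-\varphi_k\|_1$ (valid because $\varphi-\varphi_k\geq 0$), gives
$$\frac{1}{\nsn}\|A_N^{-1}-(\tsn\varphi_k)^{-1}\|_1\leq \delta^{-2}\|\varphi-\varphi_k\|_1$$
uniformly in $N$. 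The same telescoping as before upgrades this to the $n$-th powers in normalized trace norm. Dominated convergence (the integrands are bounded by $(\|\psi\|_\infty/\delta)^n$) gives $\int(\psi/\varphi_k)^n\,dm_d\to\int(\psi/\varphi)^n\,dm_d$, and a $3\varepsilon$-argument combining this with the bounded case applied to each $\varphi_k$ finishes the proof.

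The main obstacle is that $\varphi$ is only in $L^1$, so Lemma \ref{prod_lmm} cannot be invoked directly. The saving grace is the uniform lower bound $\varphi\geq\delta$: it keeps every inverse bounded in operator norm by $\delta^{-1}$ and, through the positivity of $\tsn(\varphi-\varphi_k)$, converts the $L^1$-error $\|\varphi-\varphi_k\|_1$ into an honest normalized trace-norm error between the two inverses.
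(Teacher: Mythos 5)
Your proof is correct, and it takes a genuinely different route from the paper at the key step of handling the matrix inverse $(\tsn\varphi)^{-1}$. The paper compares $(\tsn\varphi_k)^{-1}$ with $\tsn\varphi_k^{-1}$ (the truncated Toeplitz matrix of the bounded symbol $\varphi_k^{-1}$) and shows the normalized trace norm of the difference tends to zero via a Cauchy--Schwarz argument; for $n>1$ it further invokes Lemma~\ref{one_over_f} (and hence Theorem~\ref{third_T_d}) to make the second-moment estimate close. You instead replace $(\tsn\varphi)^{-1}$ by $p_j(\tsn\varphi)$ via Weierstrass approximation of $1/x$ in $C[\delta,M]$, expand multilinearly, and apply Lemma~\ref{prod_lmm} term by term to each monomial $\tsn\psi(\tsn\varphi)^{l_1}\cdots\tsn\psi(\tsn\varphi)^{l_n}$, landing on $\int(\psi\,p_j(\varphi))^n$; the uniform convergence $p_j(\varphi)\to1/\varphi$ then gives the bounded case. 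Your passage from $L^\infty$ to $L^1_+$ uses the same truncation $\varphi_k=\min\{\varphi,k\}$ and essentially the same trace-norm resolvent estimate $\frac{1}{\nsn}\|(\tsn\varphi)^{-1}-(\tsn\varphi_k)^{-1}\|_1\le\delta^{-2}\|\varphi-\varphi_k\|_1$ as the paper's $n=1$ step, so the two arguments converge there. What your route buys is a cleaner logical structure: it separates the bounded and unbounded cases, never needs to compare $(\tsn\varphi)^{-1}$ with $\tsn(\varphi^{-1})$, and bypasses Lemma~\ref{one_over_f} entirely, so the only external input is Lemma~\ref{prod_lmm}. The paper's route stays closer to the classical Toeplitz philosophy of comparing $(T\varphi)^{-1}$ with $T(\varphi^{-1})$, which is natural in the Szeg\H{o} context but carries an extra dependency.
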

\begin{proof}
It is trivial when $n=0$.
When $n=1$, we define $\varphi_k(x)=\min\{\varphi(x),k\}$.
Then,
\begin{align}\begin{aligned}\label{lem_n_first}
        &\left|\frac{1}{\nsn}\text{Tr} \left(\tsn \psi\cdot(\tsn \varphi)^{-1}\right)-\frac{1}{\nsn}\text{Tr}\left(\tsn \psi\cdot(\tsn \varphi_k)^{-1}\right)
        \right|\\
        =&\left|\frac{1}{\nsn}\text{Tr}\left[\tsn \psi\cdot(\tsn \varphi)^{-1}\cdot \tsn(\varphi-\varphi_k)\cdot(\tsn \varphi_k)^{-1}\right]\right|\\
        \leq&\frac{\|\psi\|_\infty}{\delta^2}\cdot\frac{1}{\nsn}\text{Tr}\tsn(\varphi-\varphi_k)
        =\frac{\|\psi\|_\infty}{\delta^2}\cdot\|\varphi-\varphi_k\|_1.
    \end{aligned}\end{align}
    Moreover, it holds true
    \begin{align*}
            &\frac{1}{\nsn}\mathrm{Tr}\left|\tsn \psi\cdot(\tsn \varphi_k)^{-1}-\tsn\psi\cdot \tsn\varphi_k^{-1}\right|\\
            =&\frac{1}{\nsn}\mathrm{Tr}\left|\tsn \psi\cdot(\tsn \varphi_k)^{-1}\left(I_{\nsn}-\tsn\varphi_k\cdot \tsn\varphi_k^{-1}\right)\right|\\
            \leq&\frac{\|\psi\|_\infty}{\delta}\cdot\frac{1}{\nsn}\mathrm{Tr}\left|I_{\nsn}-\tsn\varphi_k\cdot \tsn\varphi_k^{-1}\right|\\
            \leq& \frac{\|\psi\|_\infty}{\delta}\cdot\left(\frac{1}{\nsn}\mathrm{Tr}\left|I_{\nsn}-\tsn\varphi_k\cdot \tsn\varphi_k^{-1}\right|^2\right)^{\frac{1}{2}}\\
            =&\frac{\|\psi\|_\infty}{\delta}\cdot\left(\frac{1}{\nsn}\mathrm{Tr}\left(I_{\nsn}- \tsn\varphi_k^{-1}\cdot\tsn\varphi_k\right)\cdot\left(I_{\nsn}-\tsn\varphi_k\cdot \tsn\varphi_k^{-1}\right)\right)^{\frac{1}{2}}
    \end{align*}
    By Lemma \ref{prod_lmm}, we have
    \begin{align*}
        \lim_{N\rightarrow+\infty}\frac{1}{\nsn}\mathrm{Tr}\left(I_{\nsn}- \tsn\varphi_k^{-1}\cdot\tsn\varphi_k\right)\cdot\left(I_{\nsn}-\tsn\varphi_k\cdot \tsn\varphi_k^{-1}\right)=0.
    \end{align*}
    Combined with Lemma \ref{prod_lmm}, it holds true
    \begin{equation}\label{lem_n_seconed}
        \lim_{N\rightarrow+\infty}\frac{1}{\nsn}\mathrm{Tr}\tsn \psi\cdot(\tsn \varphi_k)^{-1}=\int_{\mathbb{T}^d}\frac{\psi}{\varphi_k}~dm_d.
    \end{equation}
    From \eqref{lem_n_first} and \eqref{lem_n_seconed}, it follows that
    $$
    \limsup_{N\rightarrow+\infty}\left|\frac{1}{\nsn}\text{Tr} \left(\tsn \psi\cdot(\tsn \varphi)^{-1}\right)-\int_{\mathbb{T}^d}\frac{\psi}{\varphi_k}~dm_d
        \right|\leq \frac{\|\psi\|_\infty}{\delta^2}\cdot\|\varphi-\varphi_k\|_1.
    $$
     Letting $k\rightarrow+\infty$ and applying Lebesgue's dominated convergence theorem, we have
    $$
    \lim_{N\rightarrow+\infty}\frac{1}{\nsn}\text{Tr}\left(\tsn \psi\cdot(\tsn \varphi)^{-1}\right)=\int_{\mathbb{T}^d}\frac{\psi}{\varphi}~dm_d.
    $$
    When $n>1$, by Lemma \ref{prod_lmm}, it holds that
    $$
    \lim_{N\rightarrow+\infty}\frac{1}{\nsn}\text{Tr}\left[\tsn \psi\cdot\tsn \varphi^{-1}\right]^n=\int_{\mathbb{T}^d}\bigls\frac{\psi}{\varphi}\bigrs^n ~dm_d.
    $$
    We only need to prove that
    \begin{equation}\label{main_lmm_equa}
        \lim_{N\rightarrow+\infty}\frac{1}{\nsn}\text{Tr}\left(  \left[\tsn \psi\cdot(\tsn \varphi)^{-1}\right]^n-\left[\tsn \psi\cdot\tsn \varphi^{-1}\right]^n\right)=0.
    \end{equation}
   \noindent Set $A=\tsn \psi\cdot(\tsn \varphi)^{-1}$ and $B=\tsn \psi\cdot\tsn \varphi^{-1}$. Then
    $$
    \|A\|\leq \|\psi\|_\infty\delta^{-1},~~~\|B\|\leq \|\psi\|_\infty\delta^{-1}.
    $$
    Hence, as done in \cite{2021SZEGO,nikolski2021szego},
    \begin{align*}
        &\left|\frac{1}{\nsn}\text{Tr}\left(  \left[\tsn \psi\cdot(\tsn \varphi)^{-1}\right]^n-\left[\tsn \psi\cdot\tsn \varphi^{-1}\right]^n\right)\right|\\
        =&\left|\frac{1}{\nsn}\text{Tr}\left(\sum_{i=0}^{n-1}A^i(A-B)B^{n-1-i}\right)\right|\\
        \leq& C\frac{1}{\nsn}\text{Tr}\left|A-B\right|\\
        = &C\frac{1}{\nsn}\text{Tr}\left|\tsn\psi\cdot \left((\tsn \varphi)^{-1}-\tsn\varphi^{-1}\right)\right|\\
        \leq &C' \frac{1}{\nsn}\text{Tr}\left|\tsn\varphi^{-1}-(\tsn \varphi)^{-1}\right|\\
        \leq &C'\left(\frac{1}{\nsn}\text{Tr}\bigls\tsn\varphi^{-1}-(\tsn \varphi)^{-1}\bigrs^2\right)^{\frac{1}{2}}\\
        =&C'\left(\frac{1}{\nsn}\text{Tr}\bigls\tsn\varphi^{-1}\bigrs^2-\frac{2}{\nsn}\mathrm{Tr}\left(\tsn\varphi^{-1}\cdot(\tsn \varphi)^{-1}\right)+\frac{1}{\nsn}\mathrm{Tr}(\tsn \varphi)^{-2}\right)^{\frac{1}{2}}
    \end{align*}
    where $C$ and $C'$ are two constants that do not depend on $N$. By Lemma \ref{one_over_f}, Lemma \ref{prod_lmm}, and the case $n=1$, it follows that
\begin{align*}
    &\lim_{N\rightarrow+\infty}\left(\frac{1}{\nsn}\text{Tr}\bigls\tsn\varphi^{-1}\bigrs^2-\frac{2}{\nsn}\mathrm{Tr}\left(\tsn\varphi^{-1}\cdot(\tsn \varphi)^{-1}\right)+\frac{1}{\nsn}\mathrm{Tr}(\tsn \varphi)^{-2}\right)\\
    &=\int_{\mathbb{T}^d} \left(\frac{1}{\varphi^2}-\frac{2}{\varphi^2}+\frac{1}{\varphi^2}\right) dm_d=0.
\end{align*}
Therefore, we have \eqref{main_lmm_equa}, and the proof is completed.
\end{proof}

\begin{proof}[Proof of Theorem \ref{first_T_d}]
When $\varphi=0$, the conclusion follows easily. When $\varphi\neq 0$, without loss of generalization, we assume that $m_d(\{x:\varphi(x)>1\})>0$.
We first prove the theorem for $\varphi\in L^1_+(\mathbb{T}^d)$ with $\log\varphi\in L^1(\mathbb{T}^d)$.
From Theorem \ref{bdd_below_thm}, it holds that
\begin{equation}\label{l1_det_bound}
    0 \neq \exp\left(\int_{\mathbb{T}^d}\log\varphi~dm_d\right)\leq \bigls\det\tsn\varphi\bigrs^{\frac{1}{\nsn}}\leq\|\varphi\|_1.
\end{equation}
Suppose that $\alpha\geq\exp\left(\int_{\mathbb{T}^d}\log\varphi~dm_d\right)$ is a cluster point.
Hence, there exists a subsequence $\{\si_{N_k}\}$ such that
\begin{equation}\label{cluster_sub}
    \lim_{k\rightarrow +\infty}\bigls\det T_{\si_{N_k}}\varphi\bigrs^{\frac{1}{|\si_{N_k}|}}=\alpha,
\end{equation}
The remaining is to prove
$$
\alpha\leq\exp\left(\int_{\mathbb{T}^d}\log\varphi ~dm_d\right).
$$
 Let $\phi(x)=\max\{\varphi(x),1\}$ and $\psi=\phi-\varphi$. Then $\phi\in L^1_+(\mathbb{T}^d)$, $\phi \geq 1$ and hence $\log\phi\in L^1(\mathbb{T}^d)$.
 Moreover, $0\leq\psi\leq 1$.
Note that
$$\tsn\varphi=\tsn\phi-\tsn\psi=\left(\tsn\phi\right)^{\frac{1}{2}} \left(I_{\nsn}-\left(\tsn\phi\right)^{-\frac{1}{2}}\cdot\tsn\psi\cdot\left(\tsn\phi\right)^{-\frac{1}{2}}\right)\left(\tsn\phi\right)^{\frac{1}{2}}$$
and
 $$
\det\tsn\varphi=\det\tsn\phi\cdot\det\bigls I_{\nsn}-(\tsn\phi)^{-\frac{1}{2}}\cdot\tsn\psi\cdot(\tsn\phi)^{-\frac{1}{2}}\bigrs.
 $$
 Since $\phi\geq 1$ and $\phi\not\equiv 1$, we have $\tsn\phi>I_{\nsn}$, and it follows that
 \begin{equation}\label{norm_less_one}
     \|(\tsn\phi)^{-\frac{1}{2}}\cdot\tsn\psi\cdot(\tsn\phi)^{-\frac{1}{2}}\|<1.
 \end{equation}
 This means that $I_{\nsn}-(\tsn\phi)^{-\frac{1}{2}}\cdot\tsn\psi\cdot(\tsn\phi)^{-\frac{1}{2}}$ is strictly positive, and hence we have
\begin{equation}\label{main_thm_log_equa}
     \frac{1}{\nsn}\text{Tr}\log\tsn\varphi=\frac{1}{\nsn}\text{Tr}\log\tsn\phi+\frac{1}{\nsn}\text{Tr}\log\bigls I_{\nsn}-(\tsn\phi)^{-\frac{1}{2}}\cdot\tsn\psi\cdot(\tsn\phi)^{-\frac{1}{2}}\bigrs.
 \end{equation}
Since the operator norm and the trace-class norm are equivalent in matrix algebra with finite order, we have
\begin{align}\label{mani_thm_log_sum_equa}
    &\frac{1}{\nsn}\text{Tr}\log\tsn\phi-\frac{1}{\nsn}\text{Tr}\log\tsn\varphi \nonumber\\
    =&-\frac{1}{\nsn}\text{Tr}\log\bigls I_{\nsn}-(\tsn\phi)^{-\frac{1}{2}}\cdot\tsn\psi\cdot(\tsn\phi)^{-\frac{1}{2}}\bigrs \nonumber\\
    \stackrel{\text{by }\eqref{norm_less_one}}{=}&\sum_{n=1}^\infty\frac{1}{n\nsn}\text{Tr}\left[(\tsn\phi)^{-\frac{1}{2}}\cdot\tsn\psi\cdot(\tsn\phi)^{-\frac{1}{2}}\right]^n.
\end{align}
Let
$$
f_k(n)=\frac{1}{n\left|\si_{N_k}\right|}\text{Tr}\left[(T_{\si_{N_k}}\phi)^{-\frac{1}{2}}\cdot T_{\si_{N_k}}\psi\cdot(T_{\si_{N_k}}\phi)^{-\frac{1}{2}}\right]^n,~~~~k=1,2,\cdots,
$$
and
$$
f_0(n)=\int_{\mathbb{T}^d}\frac{\psi^n}{n\phi^n} ~dm_d.
$$
Note that for any $k\geq 1$,
$$
f_k(n)=\frac{1}{n\left|\si_{N_k}\right|}\text{Tr}\left[T_{\si_{N_k}}\psi\cdot(T_{\si_{N_k}}\phi)^{-1}\right]^n.
$$
Applying Corollary \ref{Folner_thm_first} and \eqref{cluster_sub}, we conclude from \eqref{mani_thm_log_sum_equa} that
\begin{align*}
    \|f_k\|_{l^1(\mathbb{N})}&=\sum_{n=1}^\infty \frac{1}{n\left|\si_{N_k}\right|}\text{Tr}\left[T_{\si_{N_k}}\psi\cdot(T_{\si_{N_k}}\phi)^{-1}\right]^n\\
    &=\frac{1}{\left|\si_{N_k}\right|}\text{Tr}\log T_{\si_{N_k}}\phi-\frac{1}{\left|\si_{N_k}\right|}\text{Tr}\log T_{\si_{N_k}}\varphi\\
    &\stackrel{k\to\infty}{\longrightarrow} \int_{\mathbb{T}^d} \log \phi~ dm_d-\log\alpha.
\end{align*}
By Lemma \ref{g_over_f}, $f_k$ convergences to $f_0$ pointwise as $k\rightarrow+\infty$. It gives that
\begin{align*}
    \int_{\mathbb{T}^d}-\log\left(1-\frac{\psi}{\phi}\right)dm_d&=\sum_{n=1}^{\infty}\int_{\mathbb{T}^d}\frac{\psi^n}{n\phi^n} ~dm_d=\sum_{n=1}^\infty f_0(n)\\
    &\leq\liminf_{k\rightarrow +\infty}\sum_{n=1}^{\infty}f_k(n)=\int_{\mathbb{T}^d} \log \phi~ dm_d-\log\alpha.
    \end{align*}
Then we have
$$
\alpha\leq \exp\left(\int_{\mathbb{T}^d}\log\varphi ~dm_d\right).
$$
This shows that Theorem \ref{first_T_d} holds  for $\varphi\in L^1_+(\mathbb{T}^d)$ with $\log\varphi\in L^1(\mathbb{T}^d)$.

If $\varphi\in L^1_+(\mathbb{T}^d)$ and $\log\varphi\notin L^1(\mathbb{T}^d)$, we consider $\varphi_c=\varphi+c$, $c>0$. Then $\log\varphi_c\in L^1(\mathbb{T}^d)$. Applying the above result gives
$$
\limsup_{N\rightarrow+\infty}\bigls\det\tsn\varphi\bigrs^{\frac{1}{\nsn}}\leq \limsup_{N\rightarrow+\infty}\bigls\det T_{\sn}\varphi_c\bigrs^{\frac{1}{\nsn}}= \exp\left(\int_{\mathbb{T}^d}\log(\varphi+c)~dm_d\right).
$$
Letting $c\rightarrow 0^+$, we have
$$
\lim_{N\rightarrow+\infty}\bigls\det\tsn\varphi\bigrs^{\frac{1}{\nsn}}=0,
$$
and the proof is completed.
\end{proof}

\section{Proofs of Theorem \ref{second_non_folner} and Theorem \ref{first_non_folner}}\label{sec_four}
We begin with the following lemma.

\begin{lem}\label{number_vol}
Let $m$ be a positive integer, $x_1,\dots ,x_m>0$, and $~t>0$. We define
$$
B(x_1,\dots,x_m;t):=\left\{(n_1,\dots,n_m)\in\mathbb{Z}^m_+:~\sum_{j=1}^m n_jx_j\leq t\right\}.
$$
Then there exists a constant $M=M(x_1,\dots,x_m)$ depending only on $x_1,\dots,x_m$, such that
$$
-MQ_m(t)\leq\left|B(x_1,\dots,x_m;t)\right|-\frac{t^m}{m!\prod_{j=1}^m x_j}\leq MQ_m(t),
$$
where $Q_m(t)=1+t+\dots+t^{m-1}$.
\end{lem}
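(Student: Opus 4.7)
The plan is to prove the lemma by induction on the dimension $m$, using the slicing relation
\[
\left|B(x_1,\dots,x_m;t)\right|=\sum_{n_m=0}^{\lfloor t/x_m\rfloor}\left|B(x_1,\dots,x_{m-1};\,t-n_mx_m)\right|,
\]
which reduces the $m$-dimensional count to an $(m-1)$-dimensional count summed over slices. The main simplex volume will then reconstruct itself via a Riemann-sum-to-integral comparison, and the inductive error terms will be absorbed into the constant $M$.

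For the base case $m=1$, we have $|B(x_1;t)|=\lfloor t/x_1\rfloor+1$, so the difference $|B(x_1;t)|-t/x_1$ lies in $[0,1]$ and is bounded by $(1/x_1+1)Q_1(t)$. This is the form of constant we want: depending only on $x_1$. Here $Q_1(t)=1$, so the base case fixes the initial constant.

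For the inductive step, assume the conclusion with some constant $M_{m-1}=M_{m-1}(x_1,\dots,x_{m-1})$ and substitute into the slicing identity. The main term becomes
\[
\frac{1}{(m-1)!\prod_{j=1}^{m-1}x_j}\sum_{n_m=0}^{\lfloor t/x_m\rfloor}(t-n_mx_m)^{m-1},
\]
and the total error incurred by the inductive hypothesis is bounded by
\[
M_{m-1}\sum_{n_m=0}^{\lfloor t/x_m\rfloor}Q_{m-1}(t-n_mx_m)\le M_{m-1}\Big(\tfrac{t}{x_m}+1\Big)Q_{m-1}(t),
\]
which is clearly of the form $C\cdot Q_m(t)$ for a constant $C=C(x_1,\dots,x_m)$, since $(1+t/x_m)Q_{m-1}(t)$ is a polynomial in $t$ of degree $m-1$ with coefficients depending only on the $x_j$. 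Next, I will compare the Riemann sum $\sum_{n_m=0}^{\lfloor t/x_m\rfloor}(t-n_mx_m)^{m-1}$ with the integral
\[
\int_0^{t/x_m}(t-sx_m)^{m-1}\,ds=\frac{t^m}{mx_m}.
\]
Because the integrand $(t-sx_m)^{m-1}$ is monotone on $[0,t/x_m]$ with maximum $t^{m-1}$, the standard bound gives that the difference between the sum and $\frac{1}{x_m}\int$ is at most $t^{m-1}$, hence $\le Q_m(t)$. Multiplying through by $\frac{1}{(m-1)!\prod_{j=1}^{m-1}x_j}$ turns this into a $Q_m(t)$-controlled error contribution, while the main term collapses to $\frac{t^m}{m!\prod_{j=1}^m x_j}$ exactly.

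The only real bookkeeping obstacle is packaging every secondary term (the discrete-versus-continuous correction, the inductive error, and the contribution from $\lfloor t/x_m\rfloor$ not exactly equaling $t/x_m$) into a single multiplicative constant $M_m$ depending only on $x_1,\dots,x_m$ and independent of $t$. This is achieved by noting that each of these error contributions is a polynomial in $t$ of degree at most $m-1$ with coefficients expressible in terms of $x_1,\dots,x_m$ and $M_{m-1}$; choosing $M_m$ to dominate the sum of absolute values of these coefficients then yields the desired two-sided bound by $M_mQ_m(t)$.
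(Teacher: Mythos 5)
Your proof is correct and takes essentially the same approach as the paper: induction on $m$ via the slicing identity, applying the inductive hypothesis to each slice, and comparing the resulting sum $\sum_{r}(t-rx_m)^{m-1}$ against the integral $\int_0^{t/x_m}(t-sx_m)^{m-1}\,ds=\frac{t^m}{mx_m}$ using monotonicity. The only cosmetic difference is that the paper applies the sum-to-integral estimate uniformly to every degree-$j$ term $\sum_r(t-rx_s)^j$, while you bound the inductive error more crudely by $(t/x_m+1)Q_{m-1}(t)$ and only use the Riemann-sum comparison on the top-degree term; both are equivalent in substance.
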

\begin{proof}
We prove it by induction. When $m=1$, we have
$$
|B( x_1;t)|=\left[\frac{t}{x_1}\right]+1=\frac{t}{x_1}+O(1),
$$
where $[x]$ represents the largest integer not greater than $x$.
Assume that the lemma holds for $m=s-1$. Then there exists a constant $M$ depending only on $x_1,\dots,x_{s-1}$ such that
$$
-MQ_{s-1}(t)\leq |B(x_1,\dots,x_{s-1};t)|-\frac{t^{s-1}}{(s-1)!\prod_{j}^{s-1}x_j}\leq MQ_{s-1}(t),~~~~\forall~t>0.
$$
For $m=s$, since
$$
|B(x_1,\dots,x_s;t)|=\sum_{r=0}^{\left[\frac{t}{x_s}\right]}|B(x_1,\dots,x_{s-1};t-rx_s)|,
$$
and by induction, we have
\begin{equation}\label{first_estimate}
    \left||B(x_1,\dots,x_s;t)|-\frac{1}{(s-1)!\prod_{j}^{s-1}x_j}\sum_{r=0}^{\left[\frac{t}{x_s}\right]}\big(t-rx_s\big)^{s-1}\right|
    \leq M\sum_{j=0}^{s-2}\sum_{r=0}^{\left[\frac{t}{x_s}\right]}\big(t-rx_s\big)^j.
\end{equation}
It is easy to check that for any $j\geq 0$
$$
\sum_{r=0}^{\left[\frac{t}{x_s}\right]}(t-rx_s)^{j}=\frac{t^{j+1}}{(j+1)x_s}+O(Q_{j+1}(t)).
$$
Combined with \eqref{first_estimate}, there exist a constant $M'$ such that
\begin{equation*}
    \Big| |B(x_1,\dots,x_s;t)|-\frac{1}{s!\prod_{j=1}^s x_j}t^{s}\Big|\leq M' Q_s(t).
\end{equation*}
Hence the lemma follows by induction.
\end{proof}

Let $k$ be a fixed positive integer and define $Y_N$  to be
$$
Y_N=
\{(n_1,\dots,n_k)\in\mathbb{Z}^k_+:~p_1^{n_1}\cdots p_k^{n_k}\leq N\},
$$
then $\{Y_N\}$ is a F{\o}lner sequence of $\mathbb{Z}^k$ (see \cite{nikolski2021szego}).

\begin{lem}\label{dn_folner_zk} The following limit exists.
\begin{equation}\label{DN}
    \lim_{N\rightarrow +\infty} \frac{\left| Y_N\right|}{(\log N)^k}=\frac{1}{k!\prod_{j}^k \log p_j}.
\end{equation}
\end{lem}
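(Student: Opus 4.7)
The plan is to reduce this to a direct application of Lemma \ref{number_vol} by taking logarithms. Observe that the condition $p_1^{n_1}\cdots p_k^{n_k}\le N$ is equivalent to $n_1\log p_1+\cdots+n_k\log p_k\le \log N$, so with the choice $x_j=\log p_j$ and $t=\log N$ we get the identity
$$
|Y_N|=|B(\log p_1,\dots,\log p_k;\,\log N)|.
$$

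Next, I would apply Lemma \ref{number_vol} with these parameters, which yields a constant $M=M(\log p_1,\dots,\log p_k)$ such that
$$
\left||Y_N|-\frac{(\log N)^k}{k!\prod_{j=1}^k\log p_j}\right|\le M\,Q_k(\log N),
$$
where $Q_k(t)=1+t+\cdots+t^{k-1}$. Dividing by $(\log N)^k$ gives
$$
\left|\frac{|Y_N|}{(\log N)^k}-\frac{1}{k!\prod_{j=1}^k\log p_j}\right|\le \frac{M\,Q_k(\log N)}{(\log N)^k}.
$$

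Finally, since $Q_k(\log N)$ is a polynomial in $\log N$ of degree $k-1$, the right-hand side is $O\!\left((\log N)^{-1}\right)$ and tends to $0$ as $N\to\infty$. This shows the limit exists and equals $\frac{1}{k!\prod_{j=1}^k\log p_j}$, completing the proof. There is no real obstacle here: the substance of the argument is entirely contained in Lemma \ref{number_vol}, and this lemma is merely its logarithmic reformulation in the arithmetic setting.
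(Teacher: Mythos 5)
Your proof is correct and follows exactly the paper's approach: identify $Y_N$ with $B(\log p_1,\dots,\log p_k;\log N)$ and invoke Lemma \ref{number_vol}. You simply spell out the final division-by-$(\log N)^k$ step that the paper leaves implicit.
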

\begin{proof}
    By definition, $Y_N=B(\log p_1,\dots,\log p_k; \log N)$.
    Thus \eqref{DN} is a direct conclusion from Lemma \ref{number_vol}.
\end{proof}

We denote by $E_k$ the set of positive integers that are coprime with the first $k$ primes, i.e.
$$
E_k=\{m\in \mathbb{N}: (m, p_1\cdots p_k)=1\}.
$$

The next lemma is essential for the proof of Proposition \ref{szego_thm_k_vari}.
\begin{lem}\label{con_lem}
Let $k$ be a fixed positive integer and suppose that the sequence $\{a_N\}_{N=0}^\infty$ is a real sequence with $a_0=0$.
If the sequence $\{\frac{a_N}{(\log N)^k}\}_{N=2}^\infty$ converges as $N\rightarrow +\infty$, then the sequence $\{\frac{1}{N}\sum_{m\in E_k}a_{[N/m]}\}_{N=0}^\infty$ converges. Furthermore,
$$
\lim_{N\rightarrow +\infty}\frac{1}{N}\sum_{m\in E_k}a_{\left[\frac{N}{m}\right]} = \frac{(p_1-1)\cdots (p_k-1)}{p_1 \cdots p_k} \int_{0}^{1} a_{\left[\frac{1}{x}\right]} ~dx.
$$
\end{lem}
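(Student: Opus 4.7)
The plan is to combine the natural density of $E_k$ in $\mathbb{N}$ with a truncation-plus-tail argument. Setting $P_k := p_1\cdots p_k$ and $\rho := \prod_{j=1}^k(1-1/p_j) = (p_1-1)\cdots(p_k-1)/(p_1\cdots p_k)$, a M\"obius inclusion--exclusion over the $2^k$ squarefree divisors of $P_k$ yields the uniform estimate
\[
|E_k\cap[1,x]| \;=\; \sum_{d\mid P_k}\mu(d)\lfloor x/d\rfloor \;=\; \rho x + O(2^k),\qquad x\geq 1.
\]
Since $a_0=0$, only $m\leq N$ contribute to $S_N := \frac{1}{N}\sum_{m\in E_k}a_{[N/m]}$, and I would regroup by the value $n=[N/m]$, which is equivalent to $m\in(N/(n+1),\,N/n]$. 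The density estimate immediately gives
\[
\frac{\bigl|\{m\in E_k:[N/m]=n\}\bigr|}{N} \;=\; \frac{\rho}{n(n+1)} + O\!\left(\tfrac{2^k}{N}\right).
\]

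Next I would split at a truncation parameter $M\geq 1$. For the \emph{bulk} piece $S_N^{\mathrm{bulk}}(M):=\frac{1}{N}\sum_{m\in E_k,\,[N/m]\leq M}a_{[N/m]}$, only finitely many values $n\in\{1,\dots,M\}$ occur, so the previous display yields
\[
\lim_{N\to\infty}S_N^{\mathrm{bulk}}(M) \;=\; \rho\sum_{n=1}^{M}\frac{a_n}{n(n+1)}.
\]
For the complementary \emph{tail} piece $S_N^{\mathrm{tail}}(M)$, noting that $[N/m]>M$ is equivalent to $m\leq N/(M+1)$, I would invoke the bound $|a_n|\leq C(1+\log(n+2))^k$, which follows from the hypothesis that $a_N/(\log N)^k$ converges. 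Together with the monotonicity of the decreasing function $f(t):=(1+\log(N/t+2))^k$, the standard integral comparison $\sum_{m=1}^{M_0}f(m)\leq\int_0^{M_0}f(t)\,dt$ with $M_0:=\lfloor N/(M+1)\rfloor$, followed by the substitution $t=Nx$, gives the $N$-uniform estimate
\[
|S_N^{\mathrm{tail}}(M)| \;\leq\; C\int_0^{1/(M+1)}\!\!\bigl(1+\log(1/x+2)\bigr)^k dx,
\]
which tends to $0$ as $M\to\infty$ (the substitution $u=\log(1/x)$ exhibits this as an incomplete Gamma tail, finite thanks to $\int_0^1(\log(1/x))^k dx=k!$).

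Letting $N\to\infty$ first and then $M\to\infty$ thus produces $\lim_N S_N = \rho\sum_{n\geq 1}a_n/(n(n+1))$; the series converges absolutely by the same $(\log)^k$ estimate. Finally, recognising the partition $(0,1]=\bigsqcup_{n\geq 1}(1/(n+1),1/n]$, on which $[1/x]\equiv n$, identifies this sum with $\rho\int_0^1 a_{[1/x]}\,dx$, which is the claimed limit. The main obstacle is precisely the uniform tail control, and the exponent $(\log N)^k$ in the hypothesis is exactly what makes $(1+\log(1/x))^k$ integrable near $0$; any strictly larger power would leave the contribution near $x=0$ uncontrolled.
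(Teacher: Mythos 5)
Your proof is correct, and it takes a genuinely different route from the paper. The paper's proof starts from the change of variable $g(x)=a_{[1/x]}$, then splits $E_k$ into the $\varphi(q)$ residue classes $v \pmod q$ with $q=p_1\cdots p_k$; each residue class produces a Riemann sum $\frac{q}{N}\sum_r g\bigl(\frac{rq+v}{N}\bigr)$ with mesh $q/N$, and these are asserted to converge to $\int_0^1 g\,dx$, giving the factor $|F_q|/q=\rho$ at once. Your proof instead regroups the sum by the value $n=[N/m]$, counts $\#\{m\in E_k:[N/m]=n\}$ via the M\"obius estimate $|E_k\cap[1,x]|=\rho x+O(2^k)$, and then controls a truncation parameter $M$ to pass to the series $\rho\sum_{n\geq 1}\frac{a_n}{n(n+1)}$, which you then reinterpret as $\rho\int_0^1 a_{[1/x]}\,dx$. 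The two approaches trade conceptual economy for rigor in opposite directions: the paper's residue-class/Riemann-sum argument is shorter and gives the integral directly, but it invokes Riemann-sum convergence for a function $g$ that is unbounded near the origin (a point the paper glides over by simply declaring $g$ ``Riemann integrable''); your truncation-plus-tail argument is longer but makes the uniform-in-$N$ tail bound via $\int_0^{1/(M+1)}(1+\log(1/x+2))^k\,dx$ completely explicit, which is precisely the place where the $(\log N)^k$ growth hypothesis is critical (as you point out, a larger power would break the integrability). Both arrive at the same formula, and yours additionally exhibits the limit in the series form $\rho\sum_{n\geq 1}a_n/(n(n+1))$ that the paper itself ends up using later (cf.\ equation \eqref{unsatisfied} and Remark \ref{lim_kvariable}).
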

\begin{proof}
   Let $g(x)=a_{[1/x]}$ for $0<x\leq 1$.
   Since $\{\frac{a_N}{(\log N)^k}\}$ converges, there exist $M, N_0>0$ such that
   $|a_N| \leq M(\log N)^k$ for $N>N_0$. Take $\delta=\frac{1}{N_0+1}$, and hence we have
   $$
   |g(x)|\leq M|\log x|^k~~~\text{for}~0<x<\delta.
   $$
   So the function $g(x)$ is  Riemann integrable.
   Let $q=p_1\cdots p_k$, and $F_q=\{m\in \mathbb{N}: m\leq q, (m, q)=1\}$. Then
   $| F_q|=\varphi(q)=(p_1-1)\cdots (p_k-1)$,
   where $\varphi$ is Euler's totient function.
   Observe that
   \begin{align*}
       \frac{1}{N}\sum_{m\in E_k}a_{\left[\frac{N}{m}\right]}
       =&\frac{1}{N}\sum_{m\in E_k}g\left(\frac{m}{N}\right)
       =\frac{1}{N}\sum_{r=0}^\infty\sum_{v\in F_q}g\left(\frac{rq+v}{N}\right)\\
       =&\frac{1}{q}\sum_{v\in F_q}\left(\frac{q}{N}\sum_{r=0}^{\big[\frac{N-v}{q}\big]}g\left(\frac{rq+v}{N}\right)\right).
   \end{align*}
   For each $v\in F_q$, $\frac{q}{N}\sum_{r=0}^{\big[\frac{N-v}{q}\big]}g\left(\frac{rq+v}{N}\right)$ is a Riemann sum and converges to $\int_0^1 g(x)~dx$ as $N\rightarrow+\infty$. Hence
   \begin{align*}
       \lim_{N\rightarrow+\infty}\frac{1}{N}\sum_{m\in E_k}a_{\left[\frac{N}{m}\right]}
   =\frac{|F_q|}{q}\int_0^1g(x)~dx
   =\frac{(p_1-1)\cdots(p_k-1)}{p_1\cdots p_k}\int_0^1 a_{\left[\frac{1}{x}\right]}~dx.
   \end{align*}

\end{proof}

To prove Theorem \ref{second_non_folner}, we first consider the case when $\varphi$ depends only on finitely many variables.
\begin{thm}
\label{szego_thm_k_vari}
    Assume that $\varphi\in \lifr(\mathbb{T^{\infty}})$ and $\varphi$ depends only on the first $k$ variables, that is, $\varphi(\bm{z})=\phi(z_1,\dots,z_k)$, where $\phi\in\lifr(\mathbb{T}^k)$. Then there exists a probability measure $\mu$ concentrating on a countable subset of $[\mathrm{ess\text{-}inf}~\varphi,\mathrm{ess\text{-}sup}~\varphi]$ such that
for any $f\in C[\mathrm{ess\text{-}inf}~\varphi,\mathrm{ess\text{-}sup}~\varphi]$,
$$
\lim_{N\rightarrow +\infty} \frac{1}{N}\mathrm{Tr} f \big(\bm{T_N \varphi}\big)=\int f~d\mu.
$$
\end{thm}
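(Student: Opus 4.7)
The plan is to exploit the block structure that the finite-variable hypothesis forces on $\bm{T_N\varphi}$, and then apply Theorem \ref{second_T_d} (with the F{\o}lner sequence $\{Y_M\}$ of $\mathbb{Z}^k$) together with Lemma \ref{con_lem}. First I would observe that because $\varphi(\bm{z}) = \phi(z_1,\dots,z_k)$, the Fourier coefficient $\bm{\widehat{\varphi}}(q)$ vanishes unless $q$ is a quotient of products of the first $k$ primes. Decomposing every positive integer uniquely as $n = m\cdot n'$ with $m \in E_k$ and $n' = p_1^{a_1}\cdots p_k^{a_k}$, the nonvanishing of $\bm{\widehat{\varphi}}(j/i)$ forces $i$ and $j$ to share their $E_k$-part $m$, and in that case $\bm{\widehat{\varphi}}(j/i) = \widehat{\phi}(\alpha(j') - \alpha(i'))$, where $\alpha(n')$ is identified with an element of $\mathbb{Z}_+^k$. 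Sorting the basis of $\mathbb{C}^N$ by the $E_k$-part $m$ then yields the unitary equivalence
$$
\bm{T_N\varphi} \;\cong\; \bigoplus_{\substack{m \in E_k \\ m \leq N}} T_{Y_{[N/m]}}\phi,
$$
so $\mathrm{Tr}\,f(\bm{T_N\varphi}) = \sum_{m \in E_k,\, m \leq N} \mathrm{Tr}\,f(T_{Y_{[N/m]}}\phi)$ for every continuous $f$.

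Next, set $a_M := \mathrm{Tr}\,f(T_{Y_M}\phi)$ for $M \geq 1$ and $a_0 := 0$. Because $\{Y_M\}$ is a F{\o}lner sequence of $\mathbb{Z}^k$, Theorem \ref{second_T_d} combined with Lemma \ref{dn_folner_zk} yields
$$
\frac{a_M}{(\log M)^k} \longrightarrow \frac{1}{k!\,\prod_{j=1}^k \log p_j}\int_{\mathbb{T}^k} f(\phi)\,dm_k.
$$
Lemma \ref{con_lem} then produces
$$
\Lambda(f) := \lim_{N\to\infty}\frac{1}{N}\mathrm{Tr}\,f(\bm{T_N\varphi}) = \frac{\prod_{j=1}^k (p_j-1)}{\prod_{j=1}^k p_j}\int_0^1 a_{[1/x]}\,dx,
$$
establishing the desired limit for every $f \in C[\mathrm{ess\text{-}inf}\,\varphi, \mathrm{ess\text{-}sup}\,\varphi]$.

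The functional $\Lambda$ is positive ($\bm{T_N\varphi}$ is a compression of $M(\varphi)$ and hence has spectrum contained in $[\mathrm{ess\text{-}inf}\,\varphi, \mathrm{ess\text{-}sup}\,\varphi]$) and satisfies $\Lambda(1)=1$, so Riesz representation delivers a probability measure $\mu$ with $\Lambda(f) = \int f\,d\mu$. Partitioning $(0,1]$ into the level sets $\left(1/(M+1),\,1/M\right]$ of $x \mapsto [1/x]$ rewrites the last display as
$$
\int f\,d\mu = \frac{\prod_j(p_j-1)}{\prod_j p_j}\sum_{M\geq 1}\frac{1}{M(M+1)}\,\mathrm{Tr}\,f(T_{Y_M}\phi) = \sum_{M\geq 1}\sum_{j=1}^{|Y_M|} c_{M,j}\,f(\lambda_j^{(M)}),
$$
where $\lambda_j^{(M)}$ are the eigenvalues of $T_{Y_M}\phi$ and $c_{M,j} > 0$. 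Thus $\mu$ is a discrete measure concentrated on the countable set $\bigcup_{M\geq 1}\mathrm{spec}(T_{Y_M}\phi) \subset [\mathrm{ess\text{-}inf}\,\varphi, \mathrm{ess\text{-}sup}\,\varphi]$.

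The hard part will be the first step: one must verify carefully that the vanishing pattern of $\bm{\widehat{\varphi}}$ really produces a genuine block-diagonal form and that each block is the ordinary Toeplitz truncation $T_{Y_{[N/m]}}\phi$ over $\mathbb{Z}^k$, with the correct index set $Y_{[N/m]}$. Once this structural decomposition is in hand, the remaining work is a clean transliteration of the F{\o}lner-case Szeg{\H o} theorem together with the two combinatorial lemmas already established.
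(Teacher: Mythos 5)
Your proposal is correct and follows essentially the same route as the paper: the same block-diagonal decomposition $\bm{T_N\varphi}\cong\bigoplus_{m\in E_k} T_{Y_{[N/m]}}\phi$ driven by the vanishing of $\bm{\widehat{\varphi}}(q)$ outside $\mathbb{Z}^k$, the same passage through Lemma \ref{dn_folner_zk} and Lemma \ref{con_lem} to get convergence of $\frac{1}{N}\mathrm{Tr}\,f(\bm{T_N\varphi})$, and the same Riesz-representation argument showing $\mu$ is a convex combination of the spectral counting measures of the blocks $T_{Y_M}\phi$, hence discrete. The ``hard part'' you flag at the end is exactly the point the paper verifies in detail (via the sets $F_{m,N}$, the observation $\bm{\widehat{\varphi}}(mx_j/mx_i)=\bm{\widehat{\varphi}}(x_j/x_i)$, and the identification $\bm{T}_{F_{1,t}}\bm{\varphi}\cong T_{Y_t}\phi$), so your outline matches the paper's proof step for step.
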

\begin{proof}
Recall that $\si_N=\{1,\dots,N\}$.
    Let $E_k$ be defined as above and for any $m\in E_k$, we define
    $$
    F_{m,N}=\left\{mp_1^{s_1}\cdots p_k^{s_k}:~s_j\geq 0\right\}\cap\sn.
    $$
    Hence $\sn=\bigsqcup_{m\in E_k}F_{m,N}$. Since $\varphi$ depends only on the first $k$ variables, this shows that $\bm{\widehat{\varphi}}\big(j/i\big)=\widehat{\varphi}(\alpha(j/i))\neq0$ only happen in the case $\alpha\big(j/i\big)\in \mathbb{Z}^k\subset\mathbb{Z}^\infty$.
    It follows that if $i,j \in \sn$ lie in different $F_{m,N}$,
    $\bm{\widehat{\varphi}}\big(j/i\big)=0$. Thus,
    $$
    \bm{T_N \varphi} \cong\bigoplus_{m\in E_k}\bm{T_{F_{m,N}}\varphi}.
    $$
    Assume that $F_{1,[N/m]}=\{x_1,\dots,x_t\}$, and hence $F_{m,N}=\{mx_1,\dots,mx_t\}$. From    $\bm{\widehat{\varphi}}\left(mx_j/mx_i\right)=\bm{\widehat{\varphi}}\left(x_j/x_i\right)$,
    we have
    $$
    \bm{T_{F_{m,N}}\varphi}\cong \left\{\bm{\widehat{\varphi}}\left(\frac{mx_j}{mx_i}\right)\right\}_{i,j=1}^{t}=\left\{\bm{\widehat{\varphi}}\left(\frac{x_j}{x_i}\right)\right\}_{i,j=1}^{t}\cong \bm{T_{F_{1,[N/m]}}{\varphi}},
    $$
    and it follows that
    $$
    \bm{T_N \varphi}\cong\bigoplus_{m\in E_k} \bm{T_{F_{1,[N/m]}}{\varphi}}.
    $$
    Let $Y_N$ be defined as in Lemma \ref{dn_folner_zk}. Then for any $t\in\mathbb{N}$,
    $$
    \bm{T}_{F_{1,t}}\bm{\varphi}\cong T_{Y_t}\phi.
    $$
    This shows
    $$
    \bm{T_N\varphi}\cong\bigoplus_{m\in E_k} T_{Y_{[N/m]}}\phi,
    $$
    and hence for any $f\in C[\text{ess-inf~}\varphi,\text{ess-sup~}\varphi]$, we have
    $$
    \frac{1}{N}\text{Tr} f\left(\bm{T_N\varphi} \right)=\sum_{m\in E_k}\frac{1}{N} \text{Tr} f\left(T_{Y_{[N/m]}}\phi \right).
    $$
    Write $a_N=\text{Tr} f\left(T_{Y_N}\phi\right)$ and $a_0=0$. The limit
    $\lim_{N\to\infty}\frac{1}{| Y_{N}|} \text{Tr} f\left(T_{Y_{N}}\phi\right)$ exists since $\{Y_N\}_{N=1}^\infty$ is a F{\o}lner sequence of $\mathbb{Z}^k$.
     Moreover,
    because the sequence $\{| Y_{N}|/(\log N)^k\}_{N=2}^{\infty}$ convergences by Lemma \ref{dn_folner_zk}, the limit
    $\lim_{N\rightarrow\infty}a_N/(\log N)^k$ exists and hence from Lemma \ref{con_lem}
    $$
    \lim_{N\rightarrow\infty}\frac{1}{N}\text{Tr} f\left(\bm{T_N\varphi} \right)~~~~\text{exists}.
    $$
    It is easy to see that the functional $f\mapsto\lim_{N\rightarrow\infty}\frac{1}{N}\text{Tr} f\left(\bm{T_N\varphi}\right)$ is bounded on $C[\text{ess-inf~}\varphi,\text{ess-sup~}\varphi]$. By Riesz representation theorem, there is a regular Borel measure $\mu$ on $[\text{ess-inf~}\varphi,\text{ess-sup~}\varphi]$ such that for each $f\in C[\text{ess-inf~}\varphi,\text{ess-sup~}\varphi]$,
    $$\lim_{N\rightarrow\infty}\frac{1}{N}\text{Tr} f\left(\bm{T_N\varphi}\right)=\int f~d\mu.$$
    Obviously, $\mu$ is a probability measure. From Lemma \ref{con_lem}, we have
    \begin{equation}\label{unsatisfied}
    \int f~d\mu
    =\frac{(p_1-1)\cdots (p_k-1)}{p_1\cdots p_k}\sum_{n=1}^{\infty}\frac{\mathrm{Tr} f\left(T_{Y_n}\phi\right)}{n(n+1)}.
    \end{equation}
    Let $\{\lambda_{j,n}\}_{j=1}^{|Y_n|}$ be the eigenvalues of $T_{Y_n}\phi$ counted with multiplicities. Then
    $$
    \mu=\frac{(p_1-1)\cdots (p_k-1)}{p_1\cdots p_k}\sum_{n=1}^{\infty}\frac{\left|Y_n\right| }{n(n+1)}\left( \frac{1}{|Y_n|}\sum_{j=1}^{|Y_n|}\delta_{\lambda_{j,n}}\right),
    $$
    and hence $\mu$ concentrates on a countable subset.
\end{proof}
\begin{rem}\label{lim_kvariable}
Taking $f=1$ in \eqref{unsatisfied}, one obtains a number theoretic equality
$$
\sum_{n=1}^{\infty}\frac{\left|Y_n\right|}{n(n+1)}=(1-\frac{1}{p_1})^{-1}\cdots (1-\frac{1}{p_k})^{-1}.
$$
In particular, when $k=1$, it holds true
$$\sum_{n=1}^{\infty}\frac{[\log_2 n]}{n(n+1)}=1.$$
\end{rem}

\begin{lem}\label{norm_two_controll}
Let $\varphi\in\lif(\ti)$. Then
$$
\frac{1}{N}\left\|\bm{T_N\varphi}\right\|^2_{\mathcal{S}_2}\leq\|\varphi\|_2^2,
$$
where $\|~\|_{\mathcal{S}_2}$ denotes the Hilbert-Schmidt norm.
\end{lem}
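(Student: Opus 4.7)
The plan is to unfold the Hilbert--Schmidt norm entrywise and apply Bessel's inequality one row at a time. By definition,
$$
\|\bm{T_N\varphi}\|^2_{\mathcal{S}_2} = \sum_{i,j=1}^{N}\left|\bm{\widehat{\varphi}}(j/i)\right|^2 = \sum_{i=1}^{N}\sum_{j=1}^{N}\left|\widehat{\varphi}(\alpha(j/i))\right|^2,
$$
using the identification $\bm{\widehat{\varphi}}(q)=\widehat{\varphi}(\alpha(q))$ from the excerpt.

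The first step is to fix the row index $i$ and observe that, as $j$ ranges over $\{1,\dots,N\}$, the rationals $j/i$ are pairwise distinct in $\mathbb{Q}_+$. Because the map $\alpha:(\mathbb{Q}_+,\times)\to(\mathbb{Z}^\infty,+)$ in \eqref{homomorphism} is a group isomorphism, the multi-indices $\{\alpha(j/i):1\leq j\leq N\}$ are likewise pairwise distinct in $\mathbb{Z}^\infty$. Hence for each fixed $i$, the quantity $\sum_{j=1}^N |\widehat{\varphi}(\alpha(j/i))|^2$ is a sum over $N$ distinct Fourier coefficients of $\varphi$.

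The second step is Bessel's inequality (equivalently, Parseval) applied to the orthonormal system $\{\bm{z}^\kappa\}_{\kappa\in\mathbb{Z}^\infty}$ in $L^2(\mathbb{T}^\infty)$, which gives
$$
\sum_{j=1}^{N}\left|\widehat{\varphi}(\alpha(j/i))\right|^2 \leq \sum_{\kappa\in\mathbb{Z}^\infty}\left|\widehat{\varphi}(\kappa)\right|^2 = \|\varphi\|_2^2,
$$
valid for each $i\in\{1,\dots,N\}$. Note that $\varphi\in L^\infty(\mathbb{T}^\infty)\subset L^2(\mathbb{T}^\infty)$ since $m_\infty$ is a probability measure, so the right-hand side is finite.

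Summing the previous bound over $i=1,\dots,N$ yields $\|\bm{T_N\varphi}\|^2_{\mathcal{S}_2}\leq N\|\varphi\|_2^2$, which is the claimed inequality after dividing by $N$. There is no real obstacle here; the only subtlety is recognizing that the injectivity of $j\mapsto\alpha(j/i)$ (for fixed $i$) is precisely what turns the row sum into a Bessel-type estimate. The bound is actually sharp in structure, and the factor $N$ is what one should expect since $\bm{T_N\varphi}$ has $N^2$ entries indexed by distinct Fourier frequencies.
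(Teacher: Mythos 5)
Your proof is correct and takes essentially the same route as the paper: expand the Hilbert--Schmidt norm as a double sum, fix the row index $i$, and bound the row sum by $\sum_{q\in\mathbb{Q}_+}|\bm{\widehat{\varphi}}(q)|^2=\|\varphi\|_2^2$ via Bessel/Parseval, then sum over $i$ and divide by $N$. You make explicit the injectivity of $j\mapsto\alpha(j/i)$, which the paper uses implicitly.
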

\begin{proof}
    By the definition of the Hilbert-Schmidt norm,
    \begin{align*}
        \frac{1}{N}\left\|\bm{T_N \varphi}\right\|^2_{\mathcal{S}_2}
        =&\frac{1}{N}\sum_{1\leq i,j\leq N}\left|\bm{\widehat{\varphi}}\left(\frac{j}{i}\right)\right|^2\\
        \leq& \frac{1}{N} \sum_{i=1}^{N} \left(\sum_{q\in\mathbb{Q}_+}\left|\bm{\widehat{\varphi}}\left(q\right)\right|^2 \right) =\sum_{q\in\mathbb{Q}_+}\left|\bm{\widehat{\varphi}}\left(q\right)\right|^2
        =\|\varphi\|^2_2.
    \end{align*}
\end{proof}

For any $k\geq 0$, $\mathbb{Z}^k$ can be viewed as a subgroup of $\mathbb{Z}^\infty$. Let
$\mathcal{F}_k$ denote the smallest $\sigma$ algebra of Borel subsets of $\ti$ for which the first $k$ coordinate functions are measurable. Given $\varphi\in\lifr(\ti)$, we define
$$
\varphi_k=E(\varphi|\mathcal{F}_k)=\sum_{\kappa\in \mathbb{Z}^k}\widehat{\varphi}(\kappa)\bm{z}^{\kappa},~~~~k=1,2,\dots.
$$
By some techniques of martingale theory \cite{durrett2019probability}, it follows that
\begin{itemize}
    \item[(i)]
    $\varphi_k\in\lifr(\ti)$ and $\|\varphi_k\|_\infty\leq \|\varphi\|_\infty;$
    \item[(ii)] $\lim_{k\rightarrow+\infty}\|\varphi_k-\varphi\|_2^2 \rightarrow 0$.
\end{itemize}

\begin{proof}[Proof of Theorem \ref{second_non_folner}]
It is enough to prove that for any $f\in C[\text{ess-inf~}\varphi,\text{ess-sup~}\varphi]$,
\begin{equation}\label{con_f}
    \lim_{N\rightarrow +\infty} \frac{1}{N}\mathrm{Tr} f \big(\bm{T_N \varphi}\big)~~~~\text{exists}.
\end{equation}
   By Lemma \ref{general_prop}, we only need to prove \eqref{con_f} holds for $f(x)=x^m,~ \forall~ m\geq 0$.
   When $m=0$, $f$ is a constant and the conclusion is obvious. When $m\geq 1$, we have for any $k=1, 2, \ldots,$
   $$
    \left(\bm{T_N\varphi}\right)^m-\left(\bm{T_N\varphi_k}\right)^m=\sum_{i=0}^{m-1}\left(\bm{T_N\varphi_k}\right)^i \left(\bm{T_N\varphi}-\bm{T_N\varphi_k}\right)\left(\bm{T_N\varphi}\right)^{m-i-1}.
    $$
   Hence as done in the proof of Lemma \ref{g_over_f}, and by Lemma \ref{norm_two_controll},
        \begin{align*}\label{main_ineq}
        &\left|\frac{1}{N}\text{Tr}\left(\bm{T_N\varphi}\right)^m-\frac{1}{N}\text{Tr}\left(\bm{T_N\varphi_k}\right)^m\right|\\
        = &\frac{1}{N}\left|\text{Tr}\left(\sum_{i=0}^{m-1}\left(\bm{T_N\varphi_k}\right)^i \left(\bm{T_N\varphi}-\bm{T_N\varphi_k}\right)\left(\bm{T_N\varphi}\right)^{m-i-1}\right)\right|\\
        \leq &\frac{1}{N}\sum_{i=0}^{m-1}\|\bm{T_N\varphi_k}\|^i\|\bm{T_N\varphi}\|^{m-i-1} \text{Tr}\left|\bm{T_N(\varphi-\varphi_k)}\right|\\
        \leq &\frac{m\|\varphi\|_\infty^{m-1}}{N}\text{Tr}\left|\bm{T}_{N}(\bm{\varphi-\varphi_k})\right|\\
        \leq& m\|\varphi\|_\infty^{m-1}\frac{\|\bm{T}_{N}(\bm{\varphi-\varphi_k})\|_{\mathcal{S}_2}}{\sqrt{N}}\\
        \leq& m\|\varphi\|_\infty^{m-1}\|\varphi-\varphi_k\|_2.
    \end{align*}
    For any $\varepsilon>0$, take $k_0$ large enough such that
    $$
    \|\varphi-\varphi_{k_0}\|_2\leq \frac{\varepsilon}{m\|\varphi\|_\infty^{m-1}}.
    $$
    Then the above argument gives that
    \begin{equation}\label{nk0inequ}
        \left|\frac{1}{N}\text{Tr}\left(\bm{T_N\varphi}\right)^m-\frac{1}{N}\text{Tr}\left(\bm{T_N\varphi_{k_0}}\right)^m\right|<\varepsilon,~~~\forall N>0.
    \end{equation}
    Since by Theorem \ref{szego_thm_k_vari}, $\left\{ \frac{1}{N} \text{Tr} \big(\bm{T_N\varphi_{k_0}}\big)^m\right\}_{N=1}^{\infty}$ is a Cauchy sequence, there exists $N_0$ such that for any $N,M >N_0$,
    \begin{equation}\label{k0inequ}
     \left|\frac{1}{N}\text{Tr}\left(\bm{T_N\varphi_{k_0}}\right)^m-\frac{1}{M}\text{Tr}\left(\bm{T_M\varphi_{k_0}}\right)^m\right|<\varepsilon.
    \end{equation}
     Combined \eqref{nk0inequ} with \eqref{k0inequ}, a standard technique
     shows that
    $$
\left|\frac{1}{N}\text{Tr}\left(\bm{T_N\varphi}\right)^m-\frac{1}{M}\text{Tr}\left(\bm{T_M\varphi}\right)^m\right|<3\varepsilon.
$$
Then $\lim\limits_{N\rightarrow+\infty} \frac{1}{N} \text{Tr}\left(\bm{T_N\varphi}\right)^m$ exists and the proof is completed.
\end{proof}

Now, Theorem \ref{first_non_folner} follows easily.

\begin{proof}[Proof of Theorem \ref{first_non_folner}]
    By Theorem \ref{second_non_folner}, $\left\{\sn=\{1,\dots,N\}\right\}$ is a hypo-F{\o}lner sequence. The conclusion follows from Theorem \ref{convergence_L_one}.
\end{proof}

\section{Gram determinants of dilated systems}\label{sec_five}
We apply our results to prove a theorem about Gram determinants of dilation systems.

Let $H$ be a Hilbert space with an orthonormal basis $\{e_n\}_{n\in\mathbb{N}}$. For each $m\in \mathbb{N}$, we define an isometric operator $T_m$ by $T_me_n=e_{mn}$ for $n=1,2,\cdots.$ Then  $\mathcal{T}=\{T_n\}_{n\in\mathbb{N}}$ satisfies  $T_{mn}=T_mT_n$ for all $m$ and $n$. Given a nonzero vector $h\in H$, denote by $G(h,\mathcal{T})$ the Gram matrix $\{\langle T_i h,T_j h\rangle\}_{i,j\in\mathbb{N}}$, then   $G(h,\mathcal{T})$ is a multiplicative Toeplitz matrix. For any finite subset $\si\subset\mathbb{N}$, write $G_\si(h,\mathcal{T})=\{\langle T_i h,T_j h\rangle\}_{i,j\in\si}$.  We  calculate the determinant of $G_\si(h,\mathcal{T})$ via the following unitary transform from $H$ onto $H^2(\ti)$. This  unitary transform is defined by  $U:H\rightarrow H^2(\ti)$, $e_n\mapsto\bm{z}^{\alpha(n)}$. It is easy to see  $UT_n=M(\bm{z}^{\alpha(n)})U$ for any $n\in\mathbb{N}$. Therefore we have
$$
\langle T_i h,T_j h\rangle=\langle UT_i h,UT_j h\rangle=\langle M(\bm{z}^{\alpha(i)})Uh,M(\bm{z}^{\alpha(j)})Uh\rangle=\widehat{\bm{|Uh|^2}}\left(\frac{j}{i}\right),
$$
that is,  $G(h,\mathcal{T})=\bm{T(|Uh|^2)}$. Applying Theorem \ref{bdd_below_thm}, Theorem \ref{first_multi} and Theorem \ref{first_non_folner}, we have the following results.
\begin{prop}\label{dilation}
\begin{enumerate}[(1)]
    \item For any finite subset $\si\subset\mathbb{N}$,
    $$  0\neq\exp\left(\int_{\ti}\log\left|Uh\right|^2~dm_{\infty}\right)\leq\left(\det G_\si(h,\mathcal{T})\right)^{\frac{1}{|\si|}}\leq \|h\|^2.
    $$
    \item For any multiplicative F{\o}lner sequence $\{\si_N\}$,
    $$
    \lim_{N\rightarrow\infty}\left(\det G_{\si_N}(h,\mathcal{T})\right)^{\frac{1}{|\si_N|}}=\exp\left(\int_{\ti}\log\left|Uh\right|^2~dm_{\infty}\right)\neq 0.
    $$
\end{enumerate}
\end{prop}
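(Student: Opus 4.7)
The plan is to exploit the identification $G_\sigma(h,\mathcal{T}) = \bm{T_\sigma(|Uh|^2)}$ which has already been established in the paragraph preceding the statement, and then invoke the earlier theorems with symbol $\varphi = |Uh|^2$. The first observation I would record is that since $U:H\to H^2(\mathbb{T}^\infty)$ is unitary, $|Uh|^2 \in L^1_+(\mathbb{T}^\infty)$ with $\||Uh|^2\|_1 = \|Uh\|_{L^2}^2 = \|h\|^2$. This fixes the arithmetic identities needed on the right-hand sides.

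For part (1), I would apply Theorem \ref{bdd_below_thm} to $\varphi = |Uh|^2$ (with $d = +\infty$) and for the finite subset $\sigma \subset \mathbb{N}$ (viewed via $\alpha$ as a finite subset of $\mathbb{Z}^\infty$). This yields directly
$$
\exp\left(\int_{\mathbb{T}^\infty}\log|Uh|^2\,dm_\infty\right) \leq \left(\det\bm{T_\sigma(|Uh|^2)}\right)^{1/|\sigma|} \leq \||Uh|^2\|_1 = \|h\|^2,
$$
which is exactly the claimed sandwich after substituting $\det G_\sigma(h,\mathcal{T})$ on the left of the middle term. The strict positivity of the lower bound amounts to $\int_{\mathbb{T}^\infty}\log|Uh|\,dm_\infty > -\infty$, and since $h\neq 0$ implies $Uh$ is a nonzero element of $H^2(\mathbb{T}^\infty) \subset H^1(\mathbb{T}^\infty)$, this follows from the Fatou-type theorem of Aleman--Olsen--Saksman cited in the introduction (\cite[Corollary 2]{aleman2019fatou}).

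For part (2), I would apply Theorem \ref{first_multi} to the same symbol $\varphi = |Uh|^2 \in L^1_+(\mathbb{T}^\infty)$ along the given multiplicative F{\o}lner sequence $\{\sigma_N\}$, which gives
$$
\lim_{N\rightarrow\infty}\left(\det G_{\sigma_N}(h,\mathcal{T})\right)^{1/|\sigma_N|} = \lim_{N\rightarrow\infty}\left(\det\bm{T_{\sigma_N}(|Uh|^2)}\right)^{1/|\sigma_N|} = \exp\left(\int_{\mathbb{T}^\infty}\log|Uh|^2\,dm_\infty\right).
$$
Nonvanishing of this limit is inherited from the uniform lower bound established in part (1).

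There is essentially no real obstacle in this proof: the conceptual work -- recognizing the Gram matrix $G(h,\mathcal{T})$ as a multiplicative Toeplitz matrix with symbol $|Uh|^2$ via the intertwining $UT_n = M(\bm{z}^{\alpha(n)})U$ -- was already performed by the authors before the statement. The only ingredient beyond invoking Theorems \ref{bdd_below_thm} and \ref{first_multi} is the appeal to the inner-outer/Fatou structure on $\mathbb{T}^\infty$ guaranteeing $\log|Uh| \in L^1$, and that is what secures the nonvanishing assertions in both (1) and (2).
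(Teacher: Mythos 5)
Your proposal is correct and follows the same route as the paper: identify $G_\sigma(h,\mathcal{T})$ with $\bm{T_\sigma(|Uh|^2)}$, apply Theorem \ref{bdd_below_thm} and Theorem \ref{first_multi} to the symbol $\varphi=|Uh|^2$, and invoke \cite[Corollary 2]{aleman2019fatou} to conclude $\log|Uh|^2\in L^1(\ti)$ and hence nonvanishing of the lower bound. The paper's own proof records only this last step, treating the rest as immediate; you simply made those implicit deductions explicit.
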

\begin{proof}
    It  suffices to prove that
    $$
    \exp\left(\int_{\ti}\log\left|Uh\right|^2~dm_{\infty}\right)\neq 0.
    $$
    Since $Uh\in H^2(\ti)$, then $\log|Uh|^2\in L^1(\ti)$ by \cite[Corollary 2]{aleman2019fatou}, and the proof is completed.
\end{proof}

Let us see two specific examples. The first example is related to the periodic dilation problem, which attracts a lot of attention due to its closed connection with the cyclicity problem in $H^2(\ti)$ (see \cite{hedenmalm1995hilbert,noor2019, nikolski2012shadow, dan}). Setting   $H=L^2(0,1)$, we  consider the  canonical  orthonormal basis of $L^2(0,1)$,  $\{\sqrt{2}\sin(n\pi x):n\in\mathbb{N}\}$, and define $D_m(\sqrt{2}\sin(n\pi x))=\sqrt{2}\sin(mn\pi x)$ for any $m$ and $n$.
For $\varphi\in L^2(0,1)$, consider $\varphi$ as defined on the whole real axis by extending it to an odd $2$-periodic function. It is easy to see that $D_m\varphi(x)=\varphi(mx)$.
Applying Proposition \ref{dilation} to the dilation system $\mathcal{D}=\{D_n:n\in\mathbb{N}\}$, we  generalize \cite[Corollary 3.5]{2021SZEGO} to  the following situation.
\begin{cor}
Let $\varphi\in L^2(0,1)$. Then
    \begin{enumerate}[(1)]
    \item for any finite subset $\si\subset\mathbb{N}$,
    $$  0\neq\exp\left(\int_{\ti}\log\left|U\varphi\right|^2~dm_{\infty}\right)\leq\left(\det G_\si(\varphi,\mathcal{D})\right)^{\frac{1}{|\si|}}\leq \|\varphi\|^2.
    $$
    \item for any multiplicative F{\o}lner sequence $\{\si_N\}$,
    $$
    \lim_{N\rightarrow\infty}\left(\det G_{\si_N}(\varphi,\mathcal{D})\right)^{\frac{1}{|\si_N|}}=\exp\left(\int_{\ti}\log\left|U\varphi\right|^2~dm_{\infty}\right)\neq 0.
    $$
\end{enumerate}
\end{cor}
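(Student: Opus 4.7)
The plan is to recognize the statement as a direct specialization of Proposition \ref{dilation} to the concrete Hilbert space $H = L^2(0,1)$ equipped with the dilation operators $\mathcal{D} = \{D_n\}_{n \in \mathbb{N}}$, where $D_m \varphi(x) = \varphi(mx)$ after the odd $2$-periodic extension of $\varphi$. All that is required is to verify the two abstract axioms that Proposition \ref{dilation} imposes on $\mathcal{T}$: each $D_m$ is an isometry acting on some distinguished orthonormal basis $\{e_n\}_{n \in \mathbb{N}}$ via $D_m e_n = e_{mn}$, and the operators satisfy the multiplicativity $D_{mn} = D_m D_n$.

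First I would take the canonical orthonormal basis $e_n(x) = \sqrt{2}\sin(n\pi x)$ of $L^2(0,1)$. Since each $e_n$ is already odd and $2$-periodic, the extension convention gives
\[
D_m e_n(x) = e_n(mx) = \sqrt{2}\sin(mn\pi x) = e_{mn}(x)
\]
for all $m,n \in \mathbb{N}$. Consequently, $D_m$ sends an orthonormal basis into an orthonormal set and therefore extends to an isometry on all of $L^2(0,1)$. Multiplicativity is then checked on basis vectors, $D_{mn} e_k = e_{mnk} = D_m D_n e_k$, and extends by linearity and continuity.

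With these verifications in place, conclusions (1) and (2) follow verbatim from Proposition \ref{dilation} applied with the abstract system $\mathcal{T}$ replaced by $\mathcal{D}$ and with $h = \varphi$. The only substantive point beyond the reduction is the non-vanishing assertion $\exp\bigl(\int_{\ti}\log|U\varphi|^2\,dm_\infty\bigr) \neq 0$. This is exactly the content recorded at the end of the proof of Proposition \ref{dilation}: since $\varphi \neq 0$ forces $U\varphi$ to be a nonzero element of $H^2(\ti)$, the Aleman--Fatou-type result \cite[Corollary 2]{aleman2019fatou} yields $\log|U\varphi|^2 \in L^1(\ti)$, whence the exponential is strictly positive.

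I do not anticipate any serious obstacle. The one potentially confusing subtlety is that the pointwise formula $D_m \varphi(x) = \varphi(mx)$ requires the odd $2$-periodic extension of $\varphi$ in order to remain meaningful once $mx$ leaves $(0,1)$; one must check that this convention is compatible with $D_m$ being well-defined on $L^2(0,1)$ and with the identity $D_m e_n = e_{mn}$. Both facts are immediate from the parity and periodicity of the sine functions, so the proof reduces to a short verification followed by a citation of the appropriate parts of Proposition \ref{dilation}.
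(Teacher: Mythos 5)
Your proposal is correct and follows essentially the same route as the paper: take $H=L^2(0,1)$ with the sine orthonormal basis $e_n(x)=\sqrt{2}\sin(n\pi x)$, observe that under the odd $2$-periodic extension the operators $D_m$ act by $D_me_n=e_{mn}$ and hence form an abstract dilation system, and apply Proposition~\ref{dilation}. The paper's own treatment is just this reduction stated even more tersely; your additional verification of isometry and multiplicativity is exactly what is implicitly being invoked.
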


The second example comes from power dilations (see \cite{nikolski2012shadow,dan2023,nikolski2017}). Setting  $H^2_0(\mathbb{T})=\{f\in H^2(\mathbb{T}):\widehat{f}(0)=0\}$, we consider  the  canonical  orthonormal basis of $H^2_0(\mathbb{T})$, $\{w, w^2,\cdots\}$, and define  $C_mw^n=w^{mn} $ for any $m$ and $n$, that is, $C_mf(w)=f(w^m)$ for any $f\in H^2_0(\mathbb{T}).$
Using Proposition \ref{dilation} to the power dilation system $\mathcal{C}=\{C_n:n\in\mathbb{N}\}$, we have the following results.
\begin{cor}
Let $f\in H^2_0(\mathbb{T})$. Then
    \begin{enumerate}[(1)]
    \item for any finite subset $\si\subset\mathbb{N}$,
    $$  0\neq\exp\left(\int_{\ti}\log\left|Uf\right|^2~dm_{\infty}\right)\leq\left(\det G_\si(f,\mathcal{C})\right)^{\frac{1}{|\si|}}\leq \|f\|^2.
    $$
    \item for any multiplicative F{\o}lner sequence $\{\si_N\}$,
    $$
    \lim_{N\rightarrow\infty}\left(\det G_{\si_N}(f,\mathcal{C})\right)^{\frac{1}{|\si_N|}}=\exp\left(\int_{\ti}\log\left|Uf\right|^2~dm_{\infty}\right)\neq 0.
    $$
\end{enumerate}
\end{cor}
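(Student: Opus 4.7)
The plan is to deduce this corollary directly from Proposition~\ref{dilation} applied to the abstract setting $H=H^2_0(\mathbb{T})$, $\mathcal{T}=\mathcal{C}$, $h=f$. So the entire task reduces to verifying that the hypotheses of that proposition hold for the power dilation system, plus one little remark about the image of $f$ under the relevant unitary.

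First I would check that $\mathcal{C}$ fits the template used in Section~\ref{sec_five}. The sequence $\{w^n\}_{n\in\mathbb{N}}$ is an orthonormal basis of $H^2_0(\mathbb{T})$, and $C_m$ is defined on basis vectors by $C_m w^n=w^{mn}$, so it extends to an isometry. The semigroup property $C_{mn}=C_m C_n$ is immediate since $w^{mnk}=C_m(w^{nk})$ on basis vectors. Thus $\mathcal{C}=\{C_n\}_{n\in\mathbb{N}}$ is a multiplicative semigroup of isometries of exactly the kind treated in Proposition~\ref{dilation}.

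Next I would identify the unitary $U\colon H^2_0(\mathbb{T})\to H^2(\ti)$ in this concrete setting. By the recipe $e_n\mapsto\bm{z}^{\alpha(n)}$, it sends $w^n\mapsto\bm{z}^{\alpha(n)}$, which is the classical Bohr lift. Since $\alpha(n)\in\mathbb{Z}_{+}^{\infty}$ for every $n\in\mathbb{N}$, the image indeed lies in $H^2(\ti)$, so $Uf\in H^2(\ti)$ for every $f\in H^2_0(\mathbb{T})$. With this in place, part (1) of the corollary and the equality part of (2) follow directly from the corresponding statements of Proposition~\ref{dilation}, because the Gram matrix $G_\sigma(f,\mathcal{C})$ coincides with $\bm{T_\sigma(|Uf|^2)}$ by the computation carried out in the paragraph preceding that proposition.

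The only remaining point is non-vanishing of $\exp\!\left(\int_{\ti}\log|Uf|^2\,dm_\infty\right)$, which we may assume since the case $f=0$ is trivial. This is exactly the place where $Uf\in H^2(\ti)$ is used: by \cite[Corollary 2]{aleman2019fatou}, one has $\log|Uf|^2\in L^1(\ti)$ for every nonzero element of $H^2(\ti)$, and this already appears inside the proof of Proposition~\ref{dilation}. I do not see any substantive obstacle here; the main thing to be careful about is that the Bohr lift truly lands inside $H^2(\ti)$ (and not merely in $L^2(\ti)$), since this is what allows the Aleman--Perfekt theorem to guarantee a nonzero geometric mean. Everything else is bookkeeping.
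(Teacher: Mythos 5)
Your proposal is correct and follows the paper's own approach verbatim: the corollary is just Proposition~\ref{dilation} specialized to $H=H^2_0(\mathbb{T})$, $\{e_n\}=\{w^n\}$, $\mathcal{T}=\mathcal{C}$, together with the observation that $U$ is the Bohr lift sending $H^2_0(\mathbb{T})$ into $H^2(\ti)$ so that \cite[Corollary 2]{aleman2019fatou} gives $\log|Uf|^2\in L^1(\ti)$. Nothing further is needed.
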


\section*{Acknowledgements}
This work was partially supported by NNSF of China (12231005) and NSF
of Shanghai (21ZR1404200). 

\bibliographystyle{amsalpha}
\bibliography{name}

\end{document}